\newtheorem{theorem}{Theorem}
\newtheorem{lemma}{Lemma}
\newtheorem{proposition}{Proposition}
\newtheorem{corollary}{Corollary}
\newtheorem{remark}{Remark}
\newcommand{\ra}{{\rightarrow}}
\newcommand{\R}{{\mathbb R}} 
\newcommand{\N}{{\mathbb N}}  
\newcommand{\p}{{\mathbf P}}
\newcommand{\pr}{{\mathbf P}} 
\newcommand{\Z}{{\mathbb Z}}
\newcommand{\E}{{\mathbf E}}
\newcommand{\bi}{\begin{itemize}}
\newcommand{\ei}{\end{itemize}}
		\definecolor{cadmiumgreen}{rgb}{0.0, 0.42, 0.24}
\begin{document}

\title[Invariance principles]{Invariance Principles for Integrated Random Walks Conditioned to Stay Positive}

\author[Duraj]{Jetlir Duraj}
\address{Department of Economics, University of Pittsburgh, USA }
\email{jetlirduraj@gmail.com}

\author[B\"ar]{Michael B\"ar}
\address{msg systems ag}
\email{michael.baer@msg.group}

\author[Wachtel]{Vitali Wachtel}
\address{Institut f\"ur Mathematik, Universit\"at Augsburg, 86135 Augsburg, Germany}
\email{vitali.wachtel@mathematik.uni-augsburg.de}

\begin{abstract}
Let $S(n)$ be a centered random walk with finite second moment. 
We consider the integrated random walk $T(n)=S(0)+S(1)+\ldots+S(n)$. We prove invariance principles for the meander and for the bridge of this process, under the condition that the integrated random walk remains positive. Furthermore, we prove the functional convergence of its Doob's $h$-transform to the $h$-transform of the Kolmogorov diffusion conditioned to stay positive. 
\end{abstract}

\keywords{random walk, invariance principle, harmonic function, $h$-transform, Kolmogorov diffusion}
\subjclass{Primary 60G50; Secondary 60G40; 60F17} 
\maketitle
\section{Introduction, main results and discussion}




Integrated random walks conditioned to stay positive have become a popular topic in probability. There are many applications of integrated random walks in models in mathematical physics and beyond, but also within probability theory. For example, \citet{caravennadeuschel} consider models of random fields with pinning and wetting, and the integrated random walk appears naturally as a tool in the study of the so-called `free' case. The survey articles \citet{aurzadasimon} and \citet{majumdar} contain several other applications. 

While there are many papers that have studied integrated random walks under the condition of positivity, until now they have focused either on special
cases of distributions of increments
or on characterizing asymptotic behavior of the exit time (e.g. \citet{aurzada}, \citet{dembo}, \citet{dw12}, \citet{gao}, \citet{sinai}, \citet{v1} and \citet{v2}). In this paper we prove invariance principles for the meander and for the bridge of the integrated random walk conditioned to stay positive. Furthermore, using the harmonic function for the integrated random walk conditioned to stay positive constructed in \citet{dw12}, we prove a functional convergence result for the Doob-$h$-transforms of the integrated random walks conditioned to stay positive.
\\\indent The `price' we have to pay for the proofs is existence of 
a $(2+\delta)$-moment of 
the step-distribution of the random walk. Just as for the case of random walks in cones treated in \citet{invariance}, this allows the use of the strong approximation of the random walk through the Brownian motion, for the mass of probability on the random walk paths in which the positivity condition for the integrated random walk is far from being violated. 

Hence, the proofs are an adaptation of the strategy in \citet{rwcones} and \citet{invariance}. In order to apply their strategy, we focus not on the integrated random walk (which is in general not  a Markov chain), but on the \emph{pair} integrated random walk \emph{and} random walk. 
This is a Markov chain and the condition of positivity for the integrated random walk can be formulated as a geometric condition of this chain not leaving $\R_+\times\R$. 

Our proof strategy requires establishing several weak convergence results for the respective continuous limits of the above mentioned Markov chain, namely for the Kolmogorov diffusion. The Kolmogorov diffusion is the \emph{pair} consisting of the integral of the Brownian motion and the Brownian motion itself. It is a Markov process in continuous time. The limit process conditional on posivitity of the first coordinate has been studied extensively in \citet{groeneboom}. What is needed for the invariance principle for random walks is the statement that the Kolmogorov diffusion, started at some point in the non-negative quadrant, converges weakly when the starting point tends
to zero within the non-negative quadrant. A similar result is needed for the Doob-$h$-transforms of the Kolmogorov diffusion. \citet{groeneboom} proves a special case of this result but we need a much more general statement for the invariance principles. As an auxiliary result, we prove in the last part of the paper some sharp estimates and asymptotics for the density of the Kolmogorov diffusion killed when the first coordinate becomes non-positive.


Once the invariance principle for the centered random walk is proven, certain invariance principles for 
integrated walks with drift under modified conditioning follow in a straightforward way. We note these down after proving the main results.

\subsection{Assumptions on the random walk}\label{subsubsec:assumptionsint}
Let $\{X_i\}_{i\in\N}$ be i.i.d. random variables with $\E[X_i] = 0, \E[X_i^2] = 1$ and $\E[|X_i|^{2+\delta}]<\infty$ for some $\delta>0$ given and fixed throughout. For every starting point $(x,y)$ define
\[
S(n) = y+ X_1+\dots+X_n, \quad n\in \N_0,
\]
and 
\begin{align*}
T(n) &= x+S(1)+\dots+S(n)\\
&= x+ny+nX_1+(n-1)X_2+\dots+X_n,\quad n\in \N_0.
\end{align*}
We define the Markov chain $Z(n) = (T(n),S(n)),n\in \N_0$. We can write the law of motion of the Markov chain $Z$ explicitly as follows:
\begin{equation}
    \label{eq:dynamicint}
    Z(n+1) = 
    \begin{pmatrix}
    1 & 1\\
    0 & 1
    \end{pmatrix} Z(n) + X_{n+1}\begin{pmatrix}
    1\\
    1
    \end{pmatrix},\quad n\ge 0.
\end{equation}
It is easy to establish that, a two-dimensional Markov chain started at some $Z_0\in \R^2$ has as components an integrated random walk and the corresponding random walk, if and only if it satisfies the law of motion as in \eqref{eq:dynamicint}. 

We want to study the process $\{Z(n),n\ge 0\}$ under the condition that $T(n)$ remains positive. Therefore, the following stopping time plays a crucial role:
\begin{align*}
\tau_{(x,y)}=\inf\{n\geq 0: T(n)\le 0\}=\inf\{n \ge0: Z(n)\notin\R_+\times\R\}.
\end{align*}

\citet{dembo}, \citet{gao} and \citet{sinai} prove estimates for the probability $\pr(\tau_{(x,y)}>n)$, whereas \citet{v1} and \citet{v2} finds the asymptotic behavior of $\pr(\tau_{(x,y)}>n),n\ra\infty$ under specific assumptions on the random walk. 
\citet{dw12} have determined the asymptotic behavior of $\pr(\tau_{(x,y)}>n)$ for a large class of random walks with zero drift, proven integral and local limit theorems for $Z(n)$ on the event $\{\tau_{(x,y)}>n\}$. They also prove by construction the existence of a positive harmonic function $V$ for $Z(n)$ killed when leaving $\R_+\times\R$. 

\subsection{On the continuous time limits.}
The standard functional central limit theorem for random walks implies
immediately that the properly scaled chain $Z$ converges to the following functional of the Brownian motion.
Given $\{B_t,t\ge 0\}$ a standard Brownian motion, define the Kolmogorov diffusion started at $(x,y)$ as the process given by
\[
W_{(x,y)}(t) = (U_t,V_t)_{(x,y)} =\left(x+ty+\int_0^t B_sds,\ y+ B_t\right),\quad t\geq 0.
\]
Denote the exit time from $\R_+\times\R$ by $\tau^{bm}_{(x,y)}$. It is given as follows:
\[
\tau^{bm}_{(x,y)} = \inf\{t\geq 0: W_{(x,y)}(t)\notin \R_+\times\R\}.
\]
The generator of $W$ is given by the operator $\mathcal{A} = y\frac{\partial}{\partial x}+\frac{1}{2}\frac{\partial^2}{\partial y^2}$.
It is well-known that there exists a function $h:\R_+\times\R\longrightarrow \R$ with  $\mathcal{A}h=0$ on $\R_+\times\R, h(0,\cdot)\equiv 0$, 
This is the so-called harmonicity property of $h$ and it can also be written as
\[
\E_{(x,y)}[h(W(t)),\tau^{bm}>t] = h(x,y), \quad x>0,y\in \R, t>0.
\]
In other words, $h(W(t))\textbf{1}_{\{\tau^{bm}>t\}}$ is a nonnegative martingale. $h$ has both an integral representation (see appendix) and a representation in terms of confluent hypergeometric functions (see \citet{groeneboom}, or appendix). 
 It has the scaling property $h(x,y) = x^{\frac{1}{6}}h(1,x^{-\frac{1}{3}}y), x>0, y\in \R$.

We call the measure on $C[0,1]$ given by the conditional measure $W_{(x,y)}(\cdot\textrm{ }|\tau^{bm}>1)$ the \emph{Kolmogorov meander of length one} started at $(x,y)\in \R_+\times\R_+$.
 
The invariance principle for the meander of $Z$ requires us first to construct the \emph{Kolmogorov meander of length one started at $(0,0)$}.
This is given in the following Proposition. To state it we define $\alpha(x,y) = \max\{|x|^{\frac{1}{3}},|y|\}$, $x,y\in\R$.
\begin{proposition}
 \label{thm:Kolmomeander}
If $z_n\searrow (0,0)$ and $
    \log\left(\frac{1}{\alpha(z_n)^{\frac{1}{2}}}\right) = O(\alpha(z_n)^{-q_0})$ 
    for $q_0=\frac{\log(9/8)}{\log 2}$ then
\begin{equation}
     \label{eq:convintbm}
     W_{z_n}(\cdot\textrm{ }|\tau^{bm}>1)\textrm{ converges weakly in }C[0,\infty).
 \end{equation} 
 \end{proposition}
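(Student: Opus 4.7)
The plan is to use Doob's $h$-transform to isolate the boundary behavior. For $z$ in the open quadrant, $h(W(t))\mathbf{1}_{\tau^{bm}>t}$ is a nonnegative $\pr_z$-martingale, giving rise to the $h$-transformed law $\pr_z^h$ under which $W$ is conditioned to stay in $\R_+\times\R$ forever. Combining the change of measure with the identity $\pr_z(\tau^{bm}>1)=h(z)\,\E_z^h[1/h(W(1))]$ yields, for every bounded continuous $F$ on $C[0,1]$,
\[
\E_z[F(W)\mid\tau^{bm}>1]=\frac{\E_z^h\!\left[F(W)/h(W(1))\right]}{\E_z^h\!\left[1/h(W(1))\right]}.
\]
The statement thus reduces to (i) weak convergence in $C[0,\infty)$ of the $h$-transformed diffusion $W^h_{z_n}$ to a limit process $W^h_0$, together with (ii) uniform integrability of $1/h(W(1))$ along the sequence. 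Both rest on the sharp density estimates for the killed Kolmogorov diffusion announced in the introduction as an auxiliary result.

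For (i), I would construct the entrance law of $W^h$ at $(0,0)$ using the scaling identity $h(\lambda^{3/2}x,\lambda^{1/2}y)=\lambda^{1/4}h(x,y)$ together with the intrinsic self-similarity $(U_{\lambda t},V_{\lambda t})_{(\lambda^{3/2}x,\lambda^{1/2}y)}\stackrel{d}{=}(\lambda^{3/2}U_t,\lambda^{1/2}V_t)_{(x,y)}$; because both scalings survive the change of measure by $h$, they furnish well-defined candidate marginals for $W^h_0$. Finite-dimensional convergence of $W^h_{z_n}$ to these marginals then follows from an asymptotic of the form $\pr_z(W(t_1)\in dw_1,\ldots,W(t_k)\in dw_k,\tau^{bm}>1)\sim h(z)\,\mu_{t_1,\ldots,t_k}(dw_1,\ldots,dw_k)$ as $z\to 0$, while tightness in $C[0,\infty)$ is obtained by standard Kolmogorov moment estimates applied to $W^h$, whose extra drift $\nabla\log h$ is controllable via the integral representation of $h$ mentioned in the paper.

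For (ii), the scaling $h(x,y)=x^{1/6}h(1,x^{-1/3}y)$ exhibits $1/h$ as only an $x^{-1/6}$ blow-up near the boundary, and harmonicity of $h$ forces the $\pr^h$-density of $W(1)$ to vanish near $\{x=0\}$ quickly enough to deliver a uniform bound $\sup_n \E_{z_n}^h\!\left[h(W(1))^{-(1+\eta)}\right]<\infty$ for some $\eta>0$. Combined with (i), this upgrades weak convergence to convergence of both the numerator and the denominator in the identity above, identifying the limiting meander as the law of $W^h_0$ reweighted by $1/h(W(1))$.

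The main obstacle is the uniformity in (ii): it requires boundary estimates on the $h$-transformed density that go beyond mere weak convergence. The technical hypothesis $\log(1/\alpha(z_n)^{1/2})=O(\alpha(z_n)^{-q_0})$ is most naturally understood as controlling the error term in the local density expansion along the sequence $z_n$, so that the $L^{1+\eta}$ bound above is uniform in $n$. Once this is secured, (i) and (ii) combine via the displayed identity to yield the weak convergence of the Kolmogorov meander of length one started at $(0,0)$.
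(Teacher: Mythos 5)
Your reduction via the $h$-transform identity $\E_z[F(W)\mid\tau^{bm}>1]=\E_z^h[F(W)/h(W(1))]\,/\,\E_z^h[1/h(W(1))]$ is algebraically correct, but the two pillars you rest it on do not hold as stated. First, the uniform moment bound $\sup_n\E_{z_n}^h[h(W(1))^{-(1+\eta)}]<\infty$ is false for every $\eta>0$: writing it out, $\E_z^h[h(W(1))^{-(1+\eta)}]=h(z)^{-1}\int \bar p_1(z;u,v)\,h(u,v)^{-\eta}\,du\,dv$, and near the boundary portion $\{u\to 0,\ v<0\}$ one has $h(u,v)\sim\tfrac34\,u\,|v|^{-5/2}e^{-\frac29|v|^3/u}$ (Lemma \ref{thm:convbehaviorh}), so $h^{-\eta}$ blows up like $e^{\frac{2\eta}{9}|v|^3/u}$, while $\bar p_1(z;u,v)\sim h(z)\,\bar h(1,u,-v)$ is only polynomially small there (the conditioned process can sit just inside the boundary with incoming velocity without exponential cost, exactly as for the Brownian meander). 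Hence the integral diverges and harmonicity of $h$ does not "force" the $\pr^h$-density to vanish fast enough; uniform integrability of $1/h(W(1))$ would have to be obtained differently (e.g.\ from the domination \eqref{eq:needed}), not from an $L^{1+\eta}$ bound. Second, your step (i) — weak convergence of the $h$-process $W^h_{z_n}$ with tightness "by standard Kolmogorov moment estimates" — is precisely the hard uniformity issue: the drift $\nabla\log h$ is singular along $\{x=0\}$ and the starting points tend to the corner, so no off-the-shelf moment estimate controls the increments uniformly in $z_n$. In the paper this convergence of $h$-transforms is \eqref{eq:htransformKol}, and it is proved \emph{after} and \emph{from} Proposition \ref{thm:Kolmomeander}; taking it as an input makes your argument circular relative to the paper unless you supply an independent proof, which the sketch does not.

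Relatedly, your reading of the hypothesis $\log(1/\alpha(z_n)^{1/2})=O(\alpha(z_n)^{-q_0})$ as controlling an error term in a density expansion is not what it does, and your argument offers no mechanism producing the exponent $q_0=\log(9/8)/\log 2$. In the paper's proof the finite-dimensional part goes through the density ratio $m_t(x,y;u,v)$ using $\bar p_t(x,y;u,v)\sim h(x,y)\bar h(t,u,-v)$ and the dominating function in \eqref{eq:needed} (close in spirit to your fdd step), while tightness is proved directly for the conditioned process: a modulus-of-continuity bound on $[\delta,1]$ via the Markov property and $\pr_{z_n}(\tau^{bm}>\delta)/\pr_{z_n}(\tau^{bm}>1)\lesssim\delta^{-1/4}$, and near $t=0$ a dyadic iteration producing the factor $e^{-c(9/8)^N\epsilon^2/\delta}/h(z_n)$, where the condition on $z_n$ (with exactly that $q_0$) is what makes this term vanish since $1/h(z_n)\lesssim\alpha(z_n)^{-1/2}$. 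To salvage your approach you would need to prove the entrance-law and uniform tightness statements for $W^h$ near the corner from scratch, which is at least as delicate as the paper's direct argument.
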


 We denote the limit process from \ref{thm:Kolmomeander} as $W_{(0,0)}(\cdot\textrm{ }|\tau^{bm}>1)$. Its respective distribution is denoted by $\pr_{(0,0)}(W\in\cdot|\tau^{bm}>1)$. 
 One can characterize the density explicitly (see appendix). 
 
 We can define 
  the $h$-transform of $W$ as follows: For any $t>0$ and each continuous and bounded functional $f_t:C[0,t]\ra\R$,

  \[
  \E^{(h)}_{(x,y)}[f_t(W)] = \E_{(x,y)}\left[f_t(W)\frac{h(W(t))}{h(x,y)},\tau^{bm}>t\right],\quad x>0, y\in\R.
  \]
  Finally, with the help of \eqref{eq:convintbm} we show the following convergence in $\left(C[0,1],||\cdot||_{\infty}\right)$.
\begin{equation}
    \label{eq:htransformKol}
    \p_{z_n}^{(h)}\quad \textrm{ converges weakly as }z_n\searrow (0,0),
\end{equation}
under some weak restrictions on $\{z_n:n\ge 1\}$. 

The proof of \eqref{eq:convintbm} and \eqref{eq:htransformKol} requires a careful study of the density 
\[
\bar p_t(x,y;u,v)dudv = \pr_{(x,y)}(W(t)\in (du,dv),\tau^{bm}>t),
\]
which is found in Section \ref{sec:kolmo}.

\subsection{Main results.} We state the invariance principles for the process $Z$.

Let $X^{(n)}(t) = \left(\frac{T([tn])}{n^{\frac{3}{2}}}, \frac{S([tn])}{n^{\frac{1}{2}}}\right)$ for $t\in [0,1]$ be the scaled discrete-time process. This is considered as a process in $D[0,1]$, which is equipped with the supremum norm $||\cdot||_{\infty}$. 
 
\begin{theorem}\label{thm:meander}
The process $X^{(n)}(t)$ for $t\in[0,1]$ conditioned on $\{\tau>n\}$ and started at some fixed $(x,y)\in \R_+\times\R$ converges weakly to the Kolmogorov meander of length one started at $(0,0)$. 


\end{theorem}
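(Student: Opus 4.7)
The plan is to follow the strategy of Denisov--Wachtel for invariance principles in cones, applied here to the two-dimensional Markov chain $Z=(T,S)$ and the half-plane $\R_+\times\R$. The path on $[0,1]$ is split into a short initial interval $[0,m_n/n]$ with $m_n=\lfloor n^{1-\eta}\rfloor$ for small $\eta>0$, and the bulk interval $[m_n/n,1]$. The short interval is handled through the harmonic function $V$ of \citet{dw12} together with the natural scale of the killed chain; the bulk interval is handled through strong approximation combined with Proposition \ref{thm:Kolmomeander}.

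\textbf{Strong approximation.} The $(2+\delta)$-moment assumption permits the Sakhanenko coupling: on an enlarged space there is a Brownian motion $B$ with $\max_{k\le n}|S(k)-y-B(k)|=O(n^{1/(2+\delta)})$ almost surely. Summing over $k$ and comparing the Riemann sum $\sum_{j\le k}B(j)$ to $\int_0^k B(s)ds$ (the residual of which is $O(\sqrt{n})$) gives a uniform error of $O(n^{1+1/(2+\delta)})$ between $T(k)$ and $x+ky+\int_0^k B(s)ds$; after dividing by $n^{3/2}$ the resulting errors on both coordinates are $o(1)$ uniformly in $k\le n$.

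\textbf{Collapse of the initial interval.} Using the Markov property at $m_n$ together with the tail asymptotic $\pr_z(\tau>k)\sim\kappa V(z)k^{-1/4}$ from \citet{dw12}, one writes
\begin{equation*}
\pr_{(x,y)}\bigl(Z(m_n)\in dz\mid \tau>n\bigr)=\frac{\pr_z(\tau>n-m_n)}{\pr_{(x,y)}(\tau>n)}\,\pr_{(x,y)}\bigl(Z(m_n)\in dz,\ \tau>m_n\bigr),
\end{equation*}
which converges uniformly on compacts in $z$ to the $V$-tilted measure with density proportional to $V(z)$ times the killed law at time $m_n$. Under this tilted law, $Z(m_n)$ lives on its natural scale $(m_n^{3/2},m_n^{1/2})$, so $z_n:=X^{(n)}(m_n/n)$ tends to $(0,0)$ at polynomial rate $n^{-3\eta/2}$ in the first coordinate and $n^{-\eta/2}$ in the second. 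In particular $\log(1/\alpha(z_n)^{1/2})\asymp\log n$ is dominated by $\alpha(z_n)^{-q_0}\asymp n^{\eta q_0/2}$, so the hypothesis of Proposition \ref{thm:Kolmomeander} is met. The same tilting argument bounds $\sup_{t\le m_n/n}\|X^{(n)}(t)\|=o(1)$ in probability, so the initial segment collapses to the constant path $(0,0)$.

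\textbf{Bulk interval and identification of the limit.} Conditional on $Z(m_n)=z$, the Markov property makes $X^{(n)}$ on $[m_n/n,1]$ a rescaled integrated walk started at (the rescaled) $z$. Couple this with the Kolmogorov diffusion $W$ started at the same rescaled point via Step 1; the sup-norm distance between the two processes on $[m_n/n,1]$ is $o(1)$ in probability. It remains to verify that conditioning on $\{\tau>n\}$ on the discrete side translates into conditioning on $\{\tau^{bm}>1\}$ on the Brownian side. Both survival probabilities are, after rescaling, governed by the same harmonic profile (the discrete side via $\kappa V(z)n^{-1/4}$, the Brownian side via the scaling property $h(x,y)=x^{1/6}h(1,x^{-1/3}y)$ and the analogous exit asymptotic), so the Radon--Nikodym density of one conditioning with respect to the other tends to $1$ uniformly on compacta of $z$. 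Proposition \ref{thm:Kolmomeander}, applied with $z_n\searrow(0,0)$, then identifies the limit as $W_{(0,0)}(\cdot\mid\tau^{bm}>1)$, and concatenation with the collapsed initial interval yields weak convergence on the whole of $[0,1]$.

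\textbf{Main difficulty.} The most delicate point is the last step: near the boundary $\{x=0\}$ the coupling error, although sub-polynomial in sup-norm, is of the same order as $T$ itself, so a path whose first coordinate is close to $0$ on the Brownian side may well have $T\le 0$ on the random-walk side, and the two survival events do not match pathwise. Controlling this requires the sharp two-sided estimates for the density $\bar p_t(x,y;u,v)$ of the killed Kolmogorov diffusion developed in Section \ref{sec:kolmo}, used to show that under the conditioned law the process spends negligible time within the coupling-error neighbourhood of $\{x=0\}$ on any compact subinterval of $(0,1]$, so that the boundary mismatch contributes only $o(1)$ in total variation.
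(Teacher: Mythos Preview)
Your high-level plan matches the paper's: split into a short initial interval and a bulk, use strong (KMT/Hungarian) approximation on the bulk, and invoke Proposition~\ref{thm:Kolmomeander} for the Kolmogorov meander to identify the limit. However, the paper makes one structural choice that you do not, and this is precisely what renders your ``main difficulty'' paragraph a genuine gap rather than a technicality.

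The paper does \emph{not} split at a deterministic time $m_n=\lfloor n^{1-\eta}\rfloor$. It splits at the random entry time
\[
\gamma_n=\inf\{k\ge0:\ T(k)\ge n^{3/2-3\epsilon},\ S(k)>0\}
\]
into the ``deep'' set $K_{n,\epsilon}$, and then shows via \citet{dw12} that $\gamma_n\le n^{1-\epsilon}$ and $m_{\gamma_n}\le\theta_n\sqrt n$ with conditional probability $1-o(1)$. The point of this is that at time $\gamma_n$ one is \emph{guaranteed} that the first coordinate exceeds $n^{3/2-3\epsilon}$, which, for $\epsilon$ small relative to the coupling exponent $\gamma$, dominates the sup-error $n^{3/2-\gamma}$ of the integrated walk. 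This yields the pathwise sandwich
\[
\{\tau^{bm}_{z^-}>n-k\}\cap A_n\ \subset\ \{\tau_z>n-k\}\cap A_n\ \subset\ \{\tau^{bm}_{z^+}>n-k\}\cap A_n,
\]
with $z^\pm=(z_1\pm n^{3/2-\gamma},z_2)$ still lying in $K_{n,\epsilon'}$; Lemma~18 of \citet{dw12} then gives $\pr_{z^\pm}(\tau^{bm}>n-k)=(1+o(1))\pr_z(\tau>n-k)$ uniformly, so the two conditionings agree up to $o(1)$.

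Your deterministic split does not give this: $Z(m_n)$ under the conditioned law lives on the scale $(m_n^{3/2},m_n^{1/2})=(n^{3(1-\eta)/2},n^{(1-\eta)/2})$, but you have no control on its \emph{lower} deviations, i.e.\ on $\pr_{(x,y)}(T(m_n)<n^{3/2-\gamma}\mid\tau>n)$. For $z$ in that boundary layer the Radon--Nikodym comparison you sketch fails: the coupling error is the same order as $z_1$, $z^-$ may leave $\R_+\times\R$, and $\pr_{z^-}(\tau^{bm}>n)/\pr_z(\tau>n)$ need not be close to $1$. Your proposed fix---using the killed-diffusion density bounds of Section~\ref{sec:kolmo} to say the conditioned process avoids that layer---addresses only the Brownian side; you would still need a discrete analogue for the random walk, and in any case this is circular because those estimates are what are used to build Proposition~\ref{thm:Kolmomeander} in the first place. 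The random entry time $\gamma_n$ is exactly the device that removes this boundary layer from consideration, and it is the missing idea in your argument.
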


Time length of one has been chosen only due to convenience, as one can easily extend the results above to the following more general case by time scaling.
\begin{corollary}[Meander of length $t$]\label{thm:meandert}
The process $X^{(n)}(s)$ for $s\in[0,t]$ conditioned on $\{\tau>n\}$ and started at some fixed $(x,y)\in \R_+\times\R_+$ converges weakly to the Kolmogorov meander of length $t$ started at $(0,0)$.
\end{corollary}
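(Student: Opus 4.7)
The argument proceeds by parabolic scaling, reducing Corollary~\ref{thm:meandert} to the length-one case of Theorem~\ref{thm:meander}. First, by the Brownian scalings $B_{tr}\stackrel{d}{=}t^{1/2}\widetilde B_r$ and $\int_0^{tr}B_u\,du\stackrel{d}{=}t^{3/2}\int_0^{r}\widetilde B_u\,du$, the continuous map
\[
\Psi_t:C[0,1]\longrightarrow C[0,t],\qquad \Psi_t(f_1,f_2)(s)=\bigl(t^{3/2}f_1(s/t),\,t^{1/2}f_2(s/t)\bigr),
\]
transports the Kolmogorov diffusion from $(0,0)$ on $[0,1]$ to the one on $[0,t]$ and preserves positivity of the first coordinate. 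Consequently the length-$t$ Kolmogorov meander started at $(0,0)$ is precisely the $\Psi_t$-pushforward of $W_{(0,0)}(\cdot\mid\tau^{bm}>1)$ constructed in Proposition~\ref{thm:Kolmomeander}.

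On the discrete side I set $N=N(n):=[nt]$ and compare $X^{(n)}$ on $[0,t]$ to $\Psi_t(X^{(N)})$. Writing $s=tu$ with $u\in[0,1]$ and unfolding the definitions,
\[
X^{(n)}(tu)=\Bigl((N/n)^{3/2}\,T([ntu])/N^{3/2},\;(N/n)^{1/2}\,S([ntu])/N^{1/2}\Bigr),
\]
so since $[ntu]-[Nu]$ is uniformly bounded in $u$ and $N/n\to t$, the $(2+\delta)$-moment assumption yields the uniform-in-$u$ approximation $X^{(n)}(tu)=\Psi_t(X^{(N)})(tu)+o_P(1)$. The conditioning event of the corollary corresponds under this reparametrization to $\{\tau>N\}$ for $X^{(N)}$, which is the horizon required by Theorem~\ref{thm:meander}.

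Applying Theorem~\ref{thm:meander} at horizon $N$ gives weak convergence of $X^{(N)}$ conditioned on $\{\tau>N\}$ to $W_{(0,0)}(\cdot\mid\tau^{bm}>1)$ in $C[0,1]$. The continuous mapping theorem applied to $\Psi_t$, combined with the uniform approximation above, then delivers weak convergence of the conditioned $X^{(n)}$ on $[0,t]$ to the $\Psi_t$-pushforward, i.e.\ to the length-$t$ Kolmogorov meander started at $(0,0)$. The sole technical point is verifying that the unconditional $o_P(1)$ error remains negligible after conditioning on the polynomially small event $\{\tau>N\}$; this is immediate from Markov's inequality together with the polynomial lower bound on $\pr(\tau>N)$ from \citet{dw12}.
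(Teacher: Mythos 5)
Your reduction is exactly the one the paper has in mind: the paper disposes of this corollary with the single remark that it follows ``from a time rescaling'', and your map $\Psi_t$, the choice $N=[nt]$, and the appeal to Theorem~\ref{thm:meander} at horizon $N$ make that rescaling precise; the identification of the $\Psi_t$-pushforward of $W_{(0,0)}(\cdot\mid\tau^{bm}>1)$ with the length-$t$ meander is correct. Do note that your sentence ``the conditioning event of the corollary corresponds to $\{\tau>N\}$'' amounts to reading the conditioning as positivity over the whole window $[0,t]$, i.e. $\{\tau>[tn]\}$ (as in the bridge corollary, which conditions on $\{\tau>tn\}$); with the literal event $\{\tau>n\}$ and $t\neq 1$ the limit would not be the length-$t$ meander, so you are proving the intended statement, which is fine, but the correspondence is a reading of the statement rather than a consequence of the reparametrization.

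The one step that does not hold as you justify it is the last one. The error you must control under the conditional law contains $\max_{k\le N+1}|X_k|/\sqrt N$ (coming from the mismatch $[ntu]$ versus $[Nu]$), and your proposed transfer --- bound the conditional probability by the unconditional one divided by $\pr_{(x,y)}(\tau>N)$ --- requires the unconditional tail to be $o(N^{-1/4})$, since $\pr_{(x,y)}(\tau>N)$ is of order $N^{-1/4}$. Under the standing assumption $\E[|X|^{2+\delta}]<\infty$, Markov's inequality only gives $\pr(\max_{k\le N}|X_k|>\epsilon\sqrt N)=O(N^{-\delta/2})$, which is not $o(N^{-1/4})$ unless $\delta>1/2$; so for small $\delta$ the ``immediate'' step fails. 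The repair is cheap and stays inside your argument: the deterministic factor discrepancy $(N/n)^{3/2}-t^{3/2}=O(1/n)$ is absorbed by conditional tightness of $X^{(N)}$ (a consequence of the weak convergence you already invoke), and the increment term is bounded by the maximal jump of the conditioned process, a functional that is Lipschitz for $\|\cdot\|_\infty$ and vanishes on the continuous limiting meander, hence tends to $0$ in conditional probability directly by Theorem~\ref{thm:meander}. With that replacement your proof is complete and coincides with the paper's intended rescaling argument.
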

Note that this result implies in particular the limit theorem for $X^{(n)}(1)$, which is already known from \citet{dw12}.


For the invariance principle for bridges we look at steps of random walk defined on a lattice of $\R$. Without any loss of generality, we assume for the next statements that $X_1$ is supported on $\Z$ and that the resulting random walk is \emph{aperiodic}. Aperiodicity with respect to $\Z$ means that for every $x\in \Z$, the smallest subgroup of $\Z$ which contains the set 
\[
\{y: y = x+z\textrm{ with some }z\textrm{ such that }\pr(X_1=z)>0\},
\]
is $\Z$ itself.

The necessary modifications for the general case of a lattice $a+b\Z$, with maximal span $b>0$ are clear. We can then state the invariance principle for the meander as follows. We denote by $a_n$ the scaling function 
$$
a_n(x,y) = (n^{-\frac{3}{2}}x,n^{-\frac{1}{2}}y), n\in \N,(x,y)\in\Z_+\times\Z.
$$

\begin{theorem}\label{thm:bridgeinv}
For each $x,u, y,v\in \Z_+$ the process $(X^{(n)}(\cdot)|X^{(n)}(1)=a_n(u,v),\tau>n)$ started at $a_n(x,y)$ converges weakly to a process $Y$. $Y$ is a continuous Markov process with $Y|_{D[0,t]}$ absolutely continuous w.r.t. $\p_{(0,0)}(W\in\cdot\textrm{ }|\tau^{bm}>t)$. Moreover, $Y_t\searrow 0,\textrm{ as } t\nearrow 1$ a.s. 
\end{theorem}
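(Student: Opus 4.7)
The plan is to treat the conditioned bridge as an $h$-transform of the meander in the discrete setting and pass to the limit using Theorem \ref{thm:meander} together with the local limit theorem of \citet{dw12} on the event $\{\tau>n\}$. Concretely, for a fixed cutoff $t\in(0,1)$, I would decompose using the Markov property of $Z$ at time $[tn]$: for a bounded continuous functional $F$ on $D[0,t]$,
\begin{align*}
\E_{(x,y)}\bigl[F(X^{(n)})\,\big|\,X^{(n)}(1)=a_n(u,v),\tau>n\bigr]
=\frac{\E_{(x,y)}\!\left[F(X^{(n)})\,\pr_{Z([tn])}(Z(n-[tn])=(u,v),\tau>n-[tn])\mathbf{1}_{\{\tau>[tn]\}}\right]}{\pr_{(x,y)}(Z(n)=(u,v),\tau>n)}.
\end{align*}
By the local limit theorem in \citet{dw12}, for $z$ in the scaled interior of the quadrant,
\[
\pr_{z}\bigl(Z(n-[tn])=(u,v),\tau>n-[tn]\bigr)\sim n^{-2}\,\bar p_{1-t}\bigl(a_n^{-1}\text{-image of }z;0,0\bigr)
\]
up to scaling constants, and similarly the denominator has a known asymptotic involving the harmonic function $V$. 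Substituting these into the quotient and applying Theorem \ref{thm:meander} to the numerator yields a finite-dimensional limit whose law on $D[0,t]$ has Radon--Nikodym derivative proportional to $\bar p_{1-t}(Y(t);0,0)/h(Y(t))$ with respect to $\p_{(0,0)}(W\in\cdot\,|\,\tau^{bm}>t)$. This defines the candidate limit process $Y$ on $[0,t]$; Kolmogorov consistency across different values of $t$ follows from the Chapman--Kolmogorov identity satisfied by $\bar p_\cdot$.

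The second step is tightness of the family of conditioned measures in $D[0,1]$. For the initial segment $[0,t]$ tightness is inherited from Theorem \ref{thm:meander} and the fact that the Radon--Nikodym factor is bounded above once we stay away from the boundary. To control what happens on $[t,1]$ as $t\nearrow 1$, I would invoke the time-reversal symmetry of the pair $Z$ (reversing increments $X_i$ produces the same class of chains), so that the terminal piece is, after reversal, a meander of length $1-t$ started near $(u,v)$, whose scale is $O(n^{-3/2},n^{-1/2})$; standard moment bounds for the increments together with the $(2+\delta)$-moment assumption give the usual Aldous/Billingsley oscillation bound for tightness of the reversed segment.

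The third step is the boundary condition $Y_t\searrow 0$ a.s. as $t\nearrow 1$. From the Radon--Nikodym description, marginally $Y_t$ has density proportional to $\bar p_t(\cdot;0,0)\,\bar p_{1-t}(\cdot;0,0)$, and using the sharp estimates on $\bar p$ developed in Section \ref{sec:kolmo} one sees that $\E[\alpha(Y_t)^p]\to 0$ as $t\to 1$ for some $p>0$. Combining this with the strong Markov property of $Y$ on $[0,1)$ and a monotonicity argument on the exit time of a small neighbourhood of the origin gives a.s. convergence, and in particular shows the limit is supported on $C[0,1]$, so the convergence upgrades from $D[0,1]$ to $C[0,1]$. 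Absolute continuity on $D[0,t]$ for each $t<1$ is then immediate from the explicit density formula.

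The main obstacle I anticipate is obtaining the asymptotics of $\pr_{z}(Z(n-[tn])=(u,v),\tau>n-[tn])$ uniformly in $z$ over the relevant range. The local limit theorem of \citet{dw12} is pointwise; for the Radon--Nikodym representation to pass to the limit one needs uniform upper and lower bounds of Gaussian type on this conditional transition probability, matching the sharp bounds proved for $\bar p_t$ in Section \ref{sec:kolmo}. Transferring the continuous estimates on $\bar p_t$ to their discrete counterparts, via the KMT coupling allowed by the $(2+\delta)$-moment assumption as in the cone setting of \citet{rwcones}, is the technical heart of the argument; once this is in place, the Bayes computation, tightness, and boundary-behavior steps fit together in routine fashion.
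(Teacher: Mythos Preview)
Your overall strategy---Markov decomposition at $[tn]$, local limit asymptotics from \citet{dw12}, time-reversal for the terminal segment---coincides with the paper's. The difference lies in how the ``main obstacle'' you flag is resolved, and here you overlook that you already hold the key.

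You correctly observe that the local limit theorem of \citet{dw12} gives asymptotics of $\pr_{z}(Z(m)=(u,v),\tau>m)$ uniformly in the \emph{endpoint} $(u,v)$ but not in the \emph{starting point} $z$, and you propose to repair this by transferring the sharp continuous estimates on $\bar p_t$ to the discrete chain via KMT coupling. This would be substantial additional work. The paper avoids it entirely by applying the time-reversal you already invoke for tightness: writing $\tilde z=(z_1,-z_2)$, $\tilde y=(y_1,-y_2)$, one has
\[
\pr_{z}\bigl(Z(n(1-t))=y,\tau>n(1-t)\bigr)=\pr_{\tilde y}\bigl(\tilde Z(n(1-t))=\tilde z,\tilde\tau>n(1-t)\bigr),
\]
where $\tilde Z$ is a chain of the same type. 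Now the starting point $\tilde y$ is \emph{fixed} and the endpoint $\tilde z$ varies, so relation~(12) of \citet{dw12} applies directly and yields the required uniformity in $z$. With this in hand the Radon--Nikodym factor $f_t^n(z,x,y)$ converges uniformly to a bounded continuous $f_t$, and passage to the limit is routine (the paper uses Skorohod representation rather than an explicit dominated-convergence argument, but either works).

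A second, smaller point: your route to $Y_t\searrow 0$ via moment bounds and a monotonicity argument is more intricate than needed. Once the reversed bridge converges on $D[0,t]$ to a process starting at $0$, tightness of the original family on $D[1-t,1]$ is immediate, and the limit on $D[0,1]$ is automatically continuous by Theorem~15.5 of \citet{billingsley}; the boundary value $Y_1=0$ then falls out of the reversed-meander description without further analysis of densities.
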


We also derive the density of $Y_t$ w.r.t. Lebesgue measure. We call the process $Y$ the \emph{positive Kolmogorov excursion (from zero and back)}. It deserves the name for obvious reasons: it is continuous, absolutely continuous w.r.t. meander measure of length one started at zero, and converges to zero as $t$ goes to one. The proof of the invariance principle for bridges follows in broad lines the strategy already used in \citet{caravenna} for proving invariance principle for bridges of random walks conditioned to stay positive. The broad steps are as follows: first prove weak convergence in $D[0,t]$, $t\in[0,1]$; then time-reverse the process and use convergence towards the meander to prove weak convergence for time $[t,1]$; this also incidentally proves tightness for the whole process. 

Analogously to the case of meander, the restriction to time length one is artificial so that the following corollary is straightforward.
\begin{corollary}[Bridge of length $t$]
For each $x,u, y,v\in \Z_+$ and $t>0$ the process $(X^{(n)(\cdot)}|X^{(n)}(1)=a_n(u,v),\tau>tn)$ started at $a_n(x,y)$ converges weakly to a process $Y^{(t)}$. $Y^{(t)}$ is a continuous Markov process with $Y_s^{(t)}$ absolutely continuous w.r.t. $\p_0(W_s\in\cdot\textrm{ }|\tau^{bm}>s)$. Moreover, $Y^{(t)}_s\searrow 0,\textrm{ as } s\nearrow t$ a.s. 
\end{corollary}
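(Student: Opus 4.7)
The plan is to reduce this corollary to Theorem \ref{thm:bridgeinv} by a diffusive time rescaling. Set $m = \lfloor tn \rfloor$ and define the rescaled chain
\[
\tilde X^{(m)}(r) = \left(T([rm])/m^{3/2},\ S([rm])/m^{1/2}\right), \quad r \in [0,1].
\]
Using $m^{3/2}=t^{3/2}n^{3/2}+O(n^{1/2})$ and $m^{1/2}=t^{1/2}n^{1/2}+O(n^{-1/2})$, one has the identity $X^{(n)}(s) = D_t\,\tilde X^{(m)}(s/t) + o(1)$ uniformly in $s\in[0,t]$, with $D_t=\operatorname{diag}(t^{3/2},t^{1/2})$; the error vanishes because the scaled single-step increments $X_i/n^{1/2}$, $X_i/n^{3/2}$ do.

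Next I would verify that the conditioning translates cleanly. The constraint $\tau>tn$ corresponds to the exit time $\tilde\tau$ of $\tilde X^{(m)}$ exceeding $m$; the endpoint condition, read in the natural form $X^{(n)}(t)=a_n(u,v)$ (i.e.\ $T([tn])=u$, $S([tn])=v$), is equivalent to $\tilde X^{(m)}(1)=a_m(u,v)$; and the starting point $a_n(x,y)$ corresponds to $a_m(x,y)$, both tending to $(0,0)$ as $n\to\infty$. Theorem \ref{thm:bridgeinv} applied to $\tilde X^{(m)}$ then yields weak convergence in $(D[0,1],\|\cdot\|_\infty)$ to the positive Kolmogorov excursion $Y$, and composing with the continuous reparametrisation $Z\mapsto (s\mapsto D_t Z(s/t))$ from $C[0,1]$ to $C[0,t]$ produces the desired limit $Y^{(t)}_s := D_t Y_{s/t}$.

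The claimed properties of $Y^{(t)}$ are then inherited from those of $Y$. Continuity and the Markov property are preserved by the deterministic time change. Absolute continuity of the marginal $Y^{(t)}_s$ with respect to $\p_0(W_s\in\cdot\,|\,\tau^{bm}>s)$ follows from the analogous property of $Y_{s/t}$ together with the diffusive scaling $(U_{cs},V_{cs})\stackrel{d}{=}(c^{3/2}U_s,c^{1/2}V_s)$ of the Kolmogorov diffusion killed on $\{x\le 0\}$, which transports the density from time $s/t$ to time $s$ through the linear map $D_t$. Finally, $Y^{(t)}_s\searrow 0$ as $s\nearrow t$ is immediate from $Y_r\searrow 0$ as $r\nearrow 1$.

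I do not expect a genuine obstacle here. The one delicate point is the integer-part discrepancy $[sn]-[(s/t)m] = O(1)$ uniformly in $s$; combined with the tightness inherited from Theorem \ref{thm:bridgeinv} and the vanishing of scaled single-step increments, this error is negligible in the limit and does not affect weak convergence.
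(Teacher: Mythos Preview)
Your proposal is correct and follows exactly the approach the paper indicates: the paper states just before this corollary that ``the restriction to time length one is artificial so that the following corollary is straightforward,'' giving no further details, and your diffusive time rescaling $m=\lfloor tn\rfloor$, $Y^{(t)}_s=D_tY_{s/t}$ is precisely the intended reduction to Theorem~\ref{thm:bridgeinv}. Your treatment of the integer-part discrepancy and the transport of the absolute-continuity property through the scaling map $D_t$ fills in detail the paper omits; nothing else is needed.
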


Our third invariance principle shows functional convergence of $V$-transforms for the integrated random walks conditioned to be positive. The limit process is the $h$-transform of the Kolmogorov diffusion. 

Formally, the $V$-transform of $Z$ started at $x,y>0$ is defined as 

\[
\pr_{(x,y)}^{(V)}(Z(n)\in d(u,v)) = \pr_{(x,y)}(Z(n)\in d(u,v),\tau>n)\frac{V(u,v)}{V(x,y)}.
\]

To prove the functional convergence of $V$-transforms, we use \eqref{eq:htransformKol} and an estimate for the difference between the harmonic function of the integrated random walk $V$ and that of the harmonic function for the Kolmogorov diffusion $h$. Due to the work in \cite{dw12}, this estimate is given when the step of the random walk has moments strictly above $\frac{5}{2}$.

\begin{theorem}
\label{thm:htransformintegrated}
Assume that $\E[|X|^{\frac{5}{2}+\epsilon_0}]<\infty$ for some $\epsilon_0>0$. Then for every fixed $(x,y)\in \R_+\times\R_+$ the process $X^{(n)}$ under $\pr^{(V)}$ converges weakly to the Kolmogorov diffusion under $\pr_{(0,0)}^{(h)}$. 
\end{theorem}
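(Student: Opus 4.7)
The plan is to relate the $V$-transform to the meander measure via a change of variables, and then to mirror the same computation on the continuous side, where \eqref{eq:htransformKol} identifies the limiting law. For any bounded continuous functional $f$ on $D[0,1]$, one may write
\[
\E^{(V)}_{(x,y)}[f(X^{(n)})] = \frac{\pr_{(x,y)}(\tau>n)}{V(x,y)}\, \E_{(x,y)}\!\left[f(X^{(n)})\,V(Z(n))\,\big|\,\tau>n\right].
\]
The tail asymptotic $\pr_{(x,y)}(\tau>n)\sim \varkappa V(x,y) n^{-1/4}$ from \citet{dw12} makes the prefactor behave like $\varkappa n^{-1/4}$. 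The moment assumption $\E|X|^{5/2+\epsilon_0}<\infty$ is used in \citet{dw12} to sharpen the construction of $V$ into the asymptotic $V(z)=h(z)(1+o(1))$ as $|z|\to\infty$ within $\R_+\times\R$. Combined with the scaling $h(n^{3/2}u,n^{1/2}v)=n^{1/4}h(u,v)$ this gives
\[
V(Z(n))=n^{1/4}\,h(X^{(n)}(1))(1+o(1))
\]
on $\{\tau>n\}$, uniformly on compact subsets of $(0,\infty)\times\R$ for $X^{(n)}(1)$.

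Substituting these two asymptotics and invoking Theorem \ref{thm:meander}---the meander invariance principle stating that $X^{(n)}$ under $\pr_{(x,y)}(\cdot\mid\tau>n)$ converges weakly to the Kolmogorov meander from $(0,0)$---yields, for every bounded continuous $f$,
\[
\E^{(V)}_{(x,y)}[f(X^{(n)})] \longrightarrow \varkappa\, \E_{(0,0)}\!\left[f(W)\,h(W(1))\,\big|\,\tau^{bm}>1\right].
\]
Tightness in $D[0,1]$ under $\pr^{(V)}$ follows from tightness of the meander together with the uniform integrability of the Radon-Nikodym weight; alternatively, one may split $[0,1]=[0,\epsilon]\cup[\epsilon,1]$, use the Markov property at $\epsilon n$ and the invariance principle for $V$-transforms started away from the boundary, and let $\epsilon\downarrow 0$.

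To identify the limit with $\E^{(h)}_{(0,0)}[f(W)]$ one performs the same change of measure in continuous time:
\[
\E^{(h)}_{z}[f(W)]=\frac{\pr_z(\tau^{bm}>1)}{h(z)}\,\E_z\!\left[f(W)\,h(W(1))\,\big|\,\tau^{bm}>1\right].
\]
Sending $z\searrow(0,0)$ along a sequence admissible in Proposition \ref{thm:Kolmomeander} makes the prefactor converge to the same constant $\varkappa$---the agreement of the discrete and continuous constants follows from the invariance principle applied to starting points at scale $(n^{3/2},n^{1/2})$ together with $V\sim h$---and the conditional expectation converges to the meander expectation appearing above; by \eqref{eq:htransformKol} the left-hand side converges to $\E^{(h)}_{(0,0)}[f(W)]$, completing the identification. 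The main technical obstacle is the uniform integrability of the unbounded tilting weight $h(X^{(n)}(1))$ under the conditional meander law; this is where the moment assumption $\E|X|^{5/2+\epsilon_0}<\infty$ is really used, both to secure $V\sim h$ and to provide the moment bounds on the meander endpoint needed to control the polynomial growth of $h$.
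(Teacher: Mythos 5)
Your route is genuinely different from the paper's. The paper does not deduce Theorem \ref{thm:htransformintegrated} from Theorem \ref{thm:meander}; it re-runs a coupling argument (decomposition at the entrance time $\gamma_n$ into $K_{n,\epsilon}$, the Hungarian strong approximation, pathwise replacement of $V(Z(n-k))$ by $h(W(n-k))$ via the estimates of Lemma \ref{thm:hVestimate}, and finally \eqref{eq:htransformKol} together with Lemmas 20--21 of \citet{dw12} to recover $V(x,y)$). You instead write
\[
\E^{(V)}_{(x,y)}[f(X^{(n)})]=\frac{\pr_{(x,y)}(\tau>n)}{V(x,y)}\,\E_{(x,y)}\bigl[f(X^{(n)})\,V(Z(n))\,\big|\,\tau>n\bigr]
\]
and try to pass to the limit in the reweighted meander expectation using Theorem \ref{thm:meander}, the tail asymptotics $\pr_{(x,y)}(\tau>n)\sim\varkappa V(x,y)n^{-1/4}$ of \citet{dw12}, and a $V$--$h$ comparison; the limit is then matched with $\E^{(h)}_{(0,0)}[f(W)]$ through \eqref{eq:htransformKol}. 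This reduction is attractive and, if completed, shorter than the paper's argument; it also makes a separate tightness discussion unnecessary, since weak convergence on $D[0,1]$ is precisely convergence of such expectations for bounded uniformly continuous $f$.

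As written, however, there are two genuine gaps. First, everything hinges on uniform integrability of the unbounded weight $n^{-1/4}V(Z(n))$ (equivalently $h(X^{(n)}(1))$) under $\pr_{(x,y)}(\cdot\mid\tau>n)$: you name it as ``the main technical obstacle'' but do not prove it, and the suggested fix (``moment bounds on the meander endpoint'') is neither carried out nor available off the shelf in \citet{dw12}; it is also not where the $\tfrac52+\epsilon_0$ moments enter. The gap is fillable, for instance by exploiting harmonicity of $V$: $\E_{(x,y)}[V(Z(n)),\tau>n]=V(x,y)$ exactly, so the conditional means of the nonnegative weights converge to $1/\varkappa=\E_{(0,0)}[h(W(1))\mid\tau^{bm}>1]$ (the latter identity and its finiteness come from the proof of \eqref{eq:htransformKol}), and convergence in law of the weights (Theorem \ref{thm:meander} plus continuity of $h$) together with convergence of means yields the required uniform integrability; some such argument must be supplied. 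Second, the comparison you invoke, $V(z)=h(z)(1+o(1))$ as $|z|\to\infty$, is false in boundary directions: for $x$ bounded and $y\to-\infty$, $h(x,y)\sim\frac34 x(-y)^{-5/2}e^{\frac29\frac{y^3}{x}}$ (Lemma \ref{thm:convbehaviorh}) is superexponentially small while $V$ decays only polynomially, so the ratio diverges. What the $\tfrac52+\epsilon_0$ assumption actually buys (Lemma \ref{thm:hVestimate} and \eqref{eq:imp1Vh-est}) is the additive bound $|V(z)-h(z)|\le C\bigl(1+\alpha(z)^{\frac12-\Delta}\bigr)$, which after scaling gives $n^{-1/4}V(Z(n))=h(X^{(n)}(1))+o(1)$ uniformly on $\{\alpha(Z(n))\le K\sqrt n\}$ for each fixed $K$, with the complementary event absorbed into the same uniform-integrability estimate; your multiplicative statement gives nothing near the boundary, exactly where the endpoint of the conditioned walk carries non-negligible mass. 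With these two repairs your argument goes through and constitutes a legitimate alternative to the paper's proof.
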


A final note regarding the choice of the space $D[0,1]$ is due. If we define $X^{(n)}(t) = \left(\frac{T(tn)}{n^{\frac{3}{2}}}, \frac{S(tn)}{n^{\frac{1}{2}}}\right)$ for $tn\in\{1,\dots,n\}$, and through linear interpolation for other $t\in [0,1]$, we can prove the invariance principles for the integrated random walk on $\left(C[0,1],||\cdot||_{\infty}\right)$ as well. This is because $\R_+\times\R$ is convex. The limit processes are the same and the proofs are similar.



\paragraph{\bf Notation.} In the following, for the rest of the paper, we denote by $c,C,C_{\epsilon},C_R$ etc, positive constants whose value may change from line to line and whose exact value is not important for the arguments.

\section{Proof of convergence towards Kolmogorov meander}

\subsection{Construction of the meander}

In this paragraph we prove Proposition \ref{thm:Kolmomeander}. We show that the sequence of processes with continuous paths in $\R^2$
\[
\{W(\cdot)|W(0) = z_n,\tau^{bm}>1\},
\]
converges as $z_n\searrow (0,0)$, $n\ra\infty$. 

\textbf{Convergence of finite-dimensional distributions.} We first introduce  the scaling property of the Kolmogorov diffusion, which will be used several times in the proof.

Define the map $a_t:\R^2\ra\R^2$ via $a_t(x,y) = (\frac{x}{t^{\frac{3}{2}}},\frac{y}{t^{\frac{1}{2}}})$ for $t>0$. Let $b_t = a_t^{-1}$. Then it holds
\begin{equation}
\label{eq:scalingK}
a_t\left(\left(W_s, s\le t, W(0) = (x,y)\right)\right) \overset{d}{=} \left(W_s, s\le 1, W(0) = a_t(x,y)\right)
\end{equation}
or equivalently
\begin{equation*}
\left(W_s, s\le t, W(0) = (x,y)\right) \overset{d}{=} b_t\left(\left(W_s, s\le 1, W(0) = a_t(x,y)\right)\right).
\end{equation*}
The scaling property implies
\begin{align*}
    \frac{\pr_{(x,y)}(\alpha (W(t))>R,\tau^{bm}>t)}{h(x,y)} = \frac{\pr_{(x,y)}(\alpha (b_t(W(1)))>R,\tau^{bm}>1)}{t^{\frac{1}{4}}h(a_t(x,y))}.
\end{align*}
It follows that

\begin{equation}
    \label{eq:helpk2}
    \lim_{R\ra\infty}\sup_{x,y>0,a_t(x,y)\le\frac{1}{2}}\frac{\pr_{(x,y)}(\alpha (W(t))>R,\tau^{bm}>t)}{h(x,y)} = 0.
\end{equation}

Note that
\begin{equation}
    \label{eq:helpk1}
    \lim_{\epsilon\ra 0}\sup_{(x,y)\in \R^2_+, \alpha(x,y)\le\epsilon^2}\left|\frac{\pr_{(x,y)}(\tau^{bm}>1)}{\varkappa h(x,y)}-1\right| = 0.
\end{equation}
This follows from Lemma 15 in \citet{dw12} and the scaling property of Kolmogorov diffusion.

Now we take in \eqref{eq:helpk1} $\epsilon_0>0$ such that 
\begin{equation}
    \label{eq:helpk3}
    \pr_{(x,y)}(\tau^{bm}>1)\ge \frac{1}{2}\varkappa h(x,y)\textrm{ for all }(x,y)\in \R_+\times\R,\alpha(x,y)\le \epsilon_0^2.
\end{equation}
Combining \eqref{eq:helpk3} with \eqref{eq:helpk2} delivers for every $t\in (0,1]$ that
\begin{equation}
    \label{eq:helpk4}
    \lim_{R\ra\infty}\sup_{x,y>0,a_t(x,y)\le \frac{1}{2}}\pr_{(x,y)}(\alpha(W(t))>R|\tau^{bm}>t) = 0.
\end{equation}



We can write for the density of the meander started at $(x,y)$
\begin{align*}
   m_t(x,y;u,v) &=  \frac{\bar p_t(x,y;u,v)\pr_{(u,v)}(\tau^{bm}>1-t)}{\pr_{(x,y)}(\tau^{bm}>1)}\\& = \frac{\bar p_t(x,y;u,v)}{h(x,y)}\frac{h(x,y)}{\pr_{(x,y)}(\tau^{bm}>1)}\pr_{(u,v)}(\tau^{bm}>1-t).
\end{align*}

We prove in the appendix \begin{equation}
\label{eq:densityKolconvergence}
\overline{p}_t(x,y;u,v) \sim h(x,y) \overline{h}(t,u,-v),\quad x,y\searrow 0
\end{equation}
uniformly in $(u,v)$ with $\alpha(u,v)\le R$ ($R>0$ arbitrary). Here $[0,\infty)\times\R_+\times\R\ni(t,u,v)\mapsto\bar h(t,u,-v)$ is also function defined in the appendix. Moreover, we prove the estimate
\begin{equation}
    \label{eq:needed}
    \sup_{x,y\in\R_+, \alpha(x,y)\le 1}\frac{\bar p_1(x,y;u,v)}{h(x,y)}\le \psi(u,v),
\end{equation}
for an integrable function $\psi:\R^2\ra\R$ such that $\psi h$ is integrable in $\R_+\times\R$.

\eqref{eq:densityKolconvergence} shows that the density $m_t(x,y;u,v)$ converges to $m_t(u,v)$, as $x,y\searrow 0$ uniformly for $\alpha(u,v)\le R$ and $R>0$ arbitrary. Note that \eqref{eq:needed} implies that $m_t(u,v)$ is integrable. Using this fact and \eqref{eq:helpk4} it follows that 

\[
\limsup_{x,y\searrow 0}\int\int \left|m_t(x,y;u,v)-m_t(u,v)\right|dudv = 0.
\]

It follows that $m_t(u,v)$ is also a density. Together with the Markov property, this implies the convergence of finite-dimensional distributions. 

\textbf{Tightness.} First, we note that due to the continuous mapping theorem, it is enough to focus on showing tightness of the second coordinate of $W$. That is, it is sufficient to show the tightness of Brownian motion killed whenever its integral (plus a start point) becomes non-positive.

We use Theorems 7.3, 7.4 in \citet{billingsley} and the Corollary of the latter result from the same reference. 

Fix $\epsilon,\delta>0$. For every $t\in[\delta,1)$ we have 
\begin{align}
\label{Bmeander.3}
\nonumber
&\pr_{z_n}\left(\sup_{u\in[t,t+\delta]}|B(u)-B(t)|>\epsilon\bigg|\tau^{bm}>1\right)\\
\nonumber
&\hspace{0.5cm} 
=\frac{\pr_{z_n}\left(\sup_{u\in[t,t+\delta]}|B(u)-B(t)|>\epsilon,\tau^{bm}>1\right)}{\pr_{z_n}(\tau^{bm}>1)}\\
\nonumber
&\hspace{0.5cm}\le 
\frac{\pr_{z_n}\left(\sup_{u\in[t,t+\delta]}|B(u)-B(t)|>\epsilon,\tau^{bm}>t\right)}{\pr_{z_n}(\tau^{bm}>1)}\\
\nonumber
&\hspace{0.5cm}= \frac{\pr_{z_n}(\tau^{bm}>t)}{\pr_{z_n}(\tau^{bm}>1)}\pr\left(\sup_{u\in[0,\delta]}|B(u)|>\epsilon\right)
   \le \frac{\pr_{z_n}(\tau^{bm}>\delta)}{\pr_{z_n}(\tau^{bm}>1)}\pr\left(\sup_{u\in[0,\delta]}|B(u)|>\epsilon\right)\\
&\hspace{0.5cm}\le \frac{\pr_{z_n}(\tau^{bm}>\delta)}{\pr_{z_n}(\tau^{bm}>1)}C_1 e^{-c_0\epsilon^2/\delta}
\le C_2\delta^{-1/4}e^{-c_0\epsilon^2/\delta}.
\end{align}
The second equality uses the Markov property. In the last two steps we have used 
the obvious bound
$$
\pr\left(\sup_{u\in[0,\delta]}|B(u)|>\epsilon\right)\le C_1 e^{-c_0\epsilon^2/\delta},
$$
and 
$\pr_{z_n}(\tau^{bm}>t)\sim\varkappa h(z_n)t^{-\frac{1}{4}}, n\ra\infty$ which follows from Lemma 15 in \citet{dw12}. Combining now \eqref{Bmeander.3} with the above-mentioned corollary to Theorem 7.4 in  \citet{billingsley},
we conclude tightness on $C[\delta,1]$ for every $\delta>0$.


In order to extend this property on the whole interval $[0,1]$ we need to show that the probability $\pr_{z_n}(\sup_{u\in [0,\delta]}|B(u)|>\epsilon|\tau^{bm}>1)$ can be made sufficiently small.

We first note that 
\begin{align*}
&\pr_{z_n}\left(\sup_{u\in[0,\delta]}|B(u)|>\epsilon\bigg|\tau^{bm}>1\right)
\le \pr_{z_n}\left(\sup_{u\in[0,\frac{\delta}{2}]}|B(u)|>\frac{3\epsilon}{4}\bigg|\tau^{bm}>1\right)\\
&\hspace{3cm} + \pr_{z_n}\left(\sup_{u\in[0,\frac{\delta}{2}]}|B(u)|\le\frac{3\epsilon}{4}, \sup_{u\in[0,\delta]}|B(u)|>\epsilon\bigg|\tau^{bm}>1\right).
\end{align*}
For the second summand we have 
\begin{align*}
&\pr_{z_n}\left(\sup_{u\in[0,\frac{\delta}{2}]}|B(u)|\le\frac{3\epsilon}{4}, \sup_{u\in[0,\delta]}|B(u)|>\epsilon\bigg|\tau^{bm}>1\right)\\
&\hspace{2cm}\le\frac{1}{\pr_{z_n}(\tau^{bm}>1)}\pr_{z_n}\left( \sup_{u\in[\delta/2,\delta]}|B(u)-B(\delta/2)|>\frac{\epsilon}{4}
,\tau^{bm}>\frac{\delta}{2}\right)\\
&\hspace{2cm}= \frac{\pr_{z_n}(\tau^{bm}>\frac{\delta}{2})}{\pr_{z_n}(\tau^{bm}>1)}
\pr\left(\sup_{u\in[0,\frac{\delta}{2}]}|B(u)|>\frac{\epsilon}{4}\right)\le \frac{\pr_{z_n}(\tau^{bm}>\frac{\delta}{2})}{\pr_{z_n}(\tau^{bm}>1)}C_1e^{-c_0\left(\frac{\epsilon}{4}\right)^2/\frac{\delta}{2}}.
\end{align*}
As a result, 
\begin{align*}
&\pr_{z_n}\left(\sup_{u\in[0,\delta]}|B(u)|>\epsilon\bigg|\tau^{bm}>1\right)\\
&\hspace{1cm}\le \pr_{z_n}\left(\sup_{u\in[0,\frac{\delta}{2}]}|B(u)|>\frac{3\epsilon}{4}\bigg|\tau^{bm}>1\right) +\frac{\pr_{z_n}(\tau^{bm}>\frac{\delta}{2})}{\pr_{z_n}(\tau^{bm}>1)}C_1e^{-c_0\epsilon^2/8\delta}.
\end{align*}
After $N$ iterations we arrive at the bound 
\begin{align}\label{eq:h2}
&\pr_{z_n}\left(\sup_{u\in[0,\delta]}|B(u)|>\epsilon\bigg|\tau^{bm}>1\right)\nonumber\\
&\hspace{0.3cm}\le \pr_{z_n}\left(\sup_{u\in[0,\frac{\delta}{2^N}]}|B(u)|>\frac{3^N\epsilon}{4^N}\bigg|\tau^{bm}>1\right)
+\sum_{k=1}^N\frac{\pr_{z_n}(\tau^{bm}>\frac{\delta}{2^k})}{\pr_{z_n}(\tau^{bm}>1)}
C_1e^{-c_0\frac{9^{k-1}\epsilon^2}{8^k\delta}}.
\end{align}
According to \eqref{eq:helpk1} it holds 
\begin{equation}
\label{eq:prop}
\sup_{\alpha(z)\le\frac{1}{2}}\frac{\pr_z(\tau^{bm}>1)}{h(z)}\le C_*.
\end{equation}

Let $N:=\max\{k\ge 1:2^{\frac{k}{2}}\le\frac{\sqrt{\delta}}{2\alpha(z_n)}\}$. Then it holds for $k\le N$ that $\alpha\left(a_{\frac{\delta}{2^k}}(z_n)\right) = \frac{2^{\frac{k}{2}}}{\sqrt{\delta}}\alpha(z_n)$. 
Then by the scaling property of Kolmogorov diffusion, \eqref{eq:prop} and Lemma 6 in \citet{dw12}, we have for $k\le N$ 
\begin{align}\label{eq:h3}
\pr_{z_n}\left(\tau^{bm}>\frac{\delta}{2^k}\right) = \pr_{a_{\frac{\delta}{2^k}}(z_n)}\left(\tau^{bm}>1\right)\le C_*\frac{2^{k/4}}{\delta^{\frac{1}{4}}}h(z_n).
\end{align}
Combining \eqref{eq:h2} and \eqref{eq:h3}, we obtain
\begin{align*}
\pr_{z_n}\left(\sup_{u\in[0,\delta]}|B(u)|>\epsilon\bigg|\tau^{bm}>1\right)
&\le \pr_{z_n}\left(\sup_{u\in[0,\frac{\delta}{2^N}]}|B(u)|>\frac{3^N\epsilon}{4^N}\right)/\pr_{z_n}(\tau^{bm}>1)\\
&\hspace{1cm}+ C\sum_{k=1}^N\frac{2^{k/4}}{\delta^{1/4}}e^{-c_09^{k-1}\epsilon^2/8^k\delta}\\
&\le \frac{Ce^{-c_09^N\epsilon^2/8^N\delta}}{h(z_n)}+C\delta^{-\frac{1}{4}}e^{-c_0\epsilon^2/8\delta}.
\end{align*}

By the definition of $N$ it holds $2^{\frac{N}{2}}>\frac{\sqrt{\delta}}{\alpha(z_n)}$. Therefore, 
\[
e^{-(9/8))^Nc_0\epsilon^2/\delta}\le e^{-c_1 \epsilon^2/(\delta^{1-\frac{q_0}{2}}\alpha(z_n)^{q_0})},
\]
where $q_0=\frac{\log(9/8)}{\log 2}\in(0,1)$.
Consequently,
$$
\pr_{z_n}\left(\sup_{u\in[0,\delta]}|B(u)|>\epsilon\bigg|\tau^{bm}>1\right)
\le C\left(\frac{e^{-c_1 \epsilon^2/(\delta^{1-\frac{q_0}{2}}\alpha(z_n)^{q_0})}}{h(z_n)}
+\delta^{-\frac{1}{4}}e^{-c_0\epsilon^2/8\delta}\right).
$$

 For every given $\epsilon>0$ one can choose $\delta$ so that the second summand on the right hand side becomes as small as needed. Clearly, the first summand also converges to zero, for an appropriate choice of $\delta$, if we assume that the sequence $z_n$ satisfies the condition
\begin{equation}
    \label{eq:mild}
    \log\left(\frac{1}{\alpha(z_n)^{\frac{1}{2}}}\right) = O(\alpha(z_n)^{-q_0}).
\end{equation}
\eqref{eq:mild} is easy to satisfy, e.g. take sequences $\{z_n,n\ge 1\}$ such that $\alpha(z_n)=O( n^{-\beta})$ for some $\beta>0$ as $n\ra\infty$. 

\subsection{Proof of Theorem \ref{thm:meandert}}



Let $X_n(t) = \left(\frac{T([tn])}{n^{\frac{3}{2}}},\frac{S([tn])}{n^{\frac{1}{2}}}\right)$ for $t\in[0,1]$ and let $\tau$ be the stopping time defined above. When we do not annotate or mention it explicitly, it is assumed the processes are started at zero. 

Let $f:D[0,1]\ra [0,1]$ be a bounded and uniformly continuous function. First, we introduce several helpful notations. For $k\leq n, z\in\R_+\times\R, w\in D[0,1]$ we define
\[
f(z,k,w) = f(z\textbf{1}_{\{t\leq \frac{k}{n}\}}+ w(t)\textbf{1}_{\{t> \frac{k}{n}\}}).
\]
The scaling function $a_n$ is defined by $a_n(z) = (\frac{z_1}{n^{\frac{3}{2}}},\frac{z_2}{n^{\frac{1}{2}}})$ and for $z = (x,y)$ we define \\$z^+= (x+n^{\frac{3}{2}-\gamma},y), z^-= (x-n^{\frac{3}{2}-\gamma,},y)$. We also define 
\[
K_{n,\epsilon} = \{z\in \R_+\times\R: y>0, x\geq n^{\frac{3}{2}-3\epsilon}\},
\]
for the set of values deep inside $\R_+\times\R$. 
Note that for $\epsilon>0$ small enough, $z^+,z^-\in K_{n,\epsilon'}$ if $z\in K_{n,\epsilon}$, where $\epsilon'>\epsilon$. Also let 
\[
\gamma_n =\inf\{k\geq 0: Z_k\in K_{n,\epsilon}\}
\]
be the corresponding entry time. This denotes the first time 
when $Z$ is deep inside $\R_+\times\R$. Intuitively speaking, because of the moment assumption, under the conditioning, the process $Z$ will spend most of the time in $K_{n,\epsilon}$ so that $\gamma_n$ will be relatively small. 
For some $\theta_n\ra 0,n\ra \infty$ and so that $\theta_n\sqrt{n}\ra \infty$ we use
$\alpha(z) = \max\{|x|^{\frac{1}{3}},|y|\}$ to define
\[
L_{n,\epsilon} = \{z\in K_{n,\epsilon}: \alpha(z)\leq \theta_n\sqrt{n}\}=\{z: 0<y\leq \theta_n\sqrt{n}, n^{\frac{3}{2}-3\epsilon}\leq x\leq \theta_n^3n^{\frac{3}{2}}\}.
\]
 Finally, we define for future reference the random variables $m_n: = \max_{k\leq n} \alpha(Z_k)$ and $M_n:=\max_{k\leq n}|S_k|$. \\\indent To show the invariance principle we have to show w.l.o.g.
\[
\E_{(x,y)}[f(X^{(n)})|\tau>n]\ra \E_{(0,0)}[f(W)|\tau^{bm}>1], \quad n\ra \infty.
\]
For the proof we will use strong approximation by Brownian motion, as stated in the following Lemma.\footnote{This is a classical result, see e.g. \citet{major}.} 
\begin{lemma}[Hungarian construction]\label{thm:coupling}
If $\E[X]^{2+\delta}<\infty$ for some $\delta\in (0,1),$ then for every $n\geq 1$ one can define a Brownian motion $B_t$ on the same probability space, so that for any $\gamma$ satisfying $0<\gamma<\frac{\delta}{2(2+\delta)}$ the event
\[
A_n = \{\sup_{u\leq n}|S_{[u]}-B_u|\leq n^{\frac{1}{2}-\gamma}\}
\]
satisfies 
\[
\p(A_n^c) = o(n^{-r}),
\]
with $r = r(\delta,\gamma) = -2\gamma-\gamma\delta+\frac{\delta}{2}$. 
\end{lemma}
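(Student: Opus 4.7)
The plan is to use the Skorokhod embedding, which is the natural route under only a $(2+\delta)$-moment assumption (a dyadic KMT coupling would demand higher moments to yield the advertised rate). On an enlarged probability space carrying a standard Brownian motion $B$, I realize $S_n$ as $B_{\tau_n}$ for stopping times $0 = \tau_0 \le \tau_1 \le \cdots$ with i.i.d.\ increments $T_k = \tau_k - \tau_{k-1}$ distributed as a single stopping time $T$ satisfying $\E[T] = \E[X^2] = 1$ and, crucially, $\E[T^{1+\delta/2}] \le C\, \E[|X|^{2+\delta}] < \infty$ by the standard Skorokhod-embedding moment estimates.

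Writing $D_k := T_k - 1$, these are i.i.d.\ mean-zero random variables with finite $(1+\delta/2)$-moment. The Marcinkiewicz--Zygmund inequality, combined with an Ottaviani-type maximal bound, yields
\[
\pr\Bigl(\max_{k \le n} |\tau_k - k| > n^{1-a}\Bigr) \le C\, n^{-(\delta/2 - a(1+\delta/2))}
\]
for each sufficiently small $a > 0$. On the complementary event, $\max_{k \le n} |B_{\tau_k} - B_k|$ is controlled by the L\'evy modulus of continuity of $B$ on $[0, 2n]$ evaluated at scale $n^{1-a}$, which gives a bound of order $n^{(1-a)/2} \sqrt{\log n}$; choosing $a$ slightly exceeding $2\gamma$ makes this smaller than $n^{1/2-\gamma}$ eventually, and a truncation of $T$ at a mildly growing level upgrades the Markov bound from $O(n^{-r})$ to the $o(n^{-r})$ appearing in the statement.

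The remaining step is the passage from $\max_{k\le n}|S_k - B_k|$ to $\sup_{u\le n}|S_{[u]} - B_u|$: on each unit interval $[k,k+1]$ the fluctuation $|B_u - B_k|$ is dominated by the supremum of $n$ i.i.d.\ Brownian increments, which is $O(\sqrt{\log n}) \le n^{1/2-\gamma}$ outside a set of probability $o(n^{-r})$. The main obstacle is the simultaneous tuning of $a$ against the Marcinkiewicz--Zygmund tail rate, the L\'evy-modulus contribution, and the logarithmic corrections, so that the aggregate exponent hits $r(\delta,\gamma) = \delta/2 - \gamma(2+\delta)$ exactly; the constraint $\gamma < \delta/(2(2+\delta))$ is precisely what keeps $r>0$, so the tuning is tight but mechanical. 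Since the result is classical and the authors cite \citet{major}, no new ideas are needed beyond this bookkeeping.
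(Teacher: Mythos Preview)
The paper does not prove this lemma at all: it is stated with a footnote ``This is a classical result, see e.g.\ \citet{major}'' and then used as a black box. So there is nothing to compare against on the paper's side.

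Your Skorokhod-embedding sketch is the standard route under a bare $(2+\delta)$-moment hypothesis and is essentially correct. One point deserves care: with $a$ chosen strictly above $2\gamma$ (as you must, to absorb the $\sqrt{\log n}$ from the L\'evy modulus), the Marcinkiewicz--Zygmund bound gives only $O(n^{-r+\varepsilon})$ for every $\varepsilon>0$, not $o(n^{-r})$. You gesture at a truncation to upgrade this; to make it work you truncate $D_k$ at a level $b_n$ growing slowly enough that the truncated sum still concentrates, and use $\E[|D|^{1+\delta/2}\mathbf{1}_{|D|>b_n}]\to 0$ to squeeze out the missing $o(1)$ factor. This is routine but should be said explicitly if you intend a self-contained proof. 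Otherwise your outline matches what one finds in the strong-approximation literature the paper cites.
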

As a first step, we see easily that for all $\epsilon>0$ small enough only the part $$\E[f(X_n),\gamma_n\leq n^{1-\epsilon}|\tau>n]$$  contributes something to the limit. This is because 
\[
\E_{(x,y)}[f(X^{(n)}),\gamma_n> n^{1-\epsilon}|\tau>n]\leq \frac{\p_{(x,y)}(\gamma_n>n^{1-\epsilon}, \tau>n^{1-\epsilon})}{\p_{(x,y)}(\tau>n)}\leq C(x,y)e^{-n^{\epsilon'}},
\]
with some suitable $\epsilon'>0$ small enough. This follows from Corollary 3 and Lemma 12 in \citet{dw12}.
\\As a second step, we prove 
\[
\frac{\p_{(x,y)}(\tau>n,\gamma_n\leq n^{1-\epsilon}, m_{\gamma_n}>\theta_n\sqrt{n})}{\p_{(x,y)}(\tau>n)} = o(1).
\]
Due to Markov property we have
\begin{align*}
&\p_{(x,y)}(\tau>n,\gamma_n\leq n^{1-\epsilon}, m_{\gamma_n}>\theta_n\sqrt{n})\\&\leq \sum_{k\leq n^{1-\epsilon}}\int_{K_{n,\epsilon}}\p_z(\tau>n-n^{1-\epsilon})\p(Z_{\gamma_n}\in dz, \tau>k=\gamma_n,m_{\gamma_n}>\theta_n\sqrt{n})\\
&\leq C\frac{1}{n^{\frac{1}{4}}}\E_{(x,y)}[h(Z_{\gamma_n}), \gamma_n\leq n^{1-\epsilon}\tau>\gamma_n, m_{\gamma_n}>\theta_n\sqrt{n}].
\end{align*}
Here we have used, that 
\[
\p_z(\tau>n)\leq C\frac{h(z)}{n^{\frac{1}{4}}},\quad \textrm{ uniformly in }z\in K_{n,\epsilon},\ n\geq 1.
\]
This inequality is contained in Lemma 18 in \citet{dw12}.
We now show that
\begin{equation}\label{eq:eqhelp1meander}
\E[h(Z_{\gamma_n}), \gamma_n<n^{1-\epsilon}\tau>\gamma_n, m_{\gamma_n}>\theta_n\sqrt{n}] = o(1).
\end{equation}
We use Lemma 6 in \citet{dw12} to see, that on $\{\gamma_n\leq n^{1-\epsilon}\}$ we have $h(Z_{\gamma_n})\leq Cm_{n^{1-\epsilon}}^\frac{1}{2}$. Therefore,
\[
\E_{(x,y)}[h(Z_{\gamma_n}), \gamma_n<n^{1-\epsilon}\tau>\gamma_n, m_{\gamma_n}>\theta_n\sqrt{n}]= O(\E_{(x,y)}[m_{n^{1-\epsilon}}^\frac{1}{2}, m_{n^{1-\epsilon}}>\theta_n\sqrt{n}]).
\]
Since
$$m_{n^{1-\epsilon}}\leq n^{\frac{1-\epsilon}{3}}M(n^{1-\epsilon})^{\frac{1}{3}}+ M(n^{1-\epsilon}),$$
we have 
\begin{align*}
\E_{(x,y)}[m_{n^{1-\epsilon}}^\frac{1}{2}, m_{n^{1-\epsilon}}>\theta_n\sqrt{n}]\leq & \E_{(x,y)}[n^{\frac{1-\epsilon}{6}}M(n^{1-\epsilon})^\frac{1}{6}, M(n^{1-\epsilon})>\theta_n^3 n^{\epsilon}\sqrt{n}]\\&+\E_{(x,y)}[M(n^{1-\epsilon})^\frac{1}{2}, M(n^{1-\epsilon})>\theta_n\sqrt{n}]\\
&\leq 2\E_{(x,y)}[M(n^{1-\epsilon})^\frac{1}{2}, M(n^{1-\epsilon})>\theta_n\sqrt{n}],
\end{align*}
for $\epsilon>0$ small enough and $\theta_n$ going to zero slowly enough.\footnote{E.g. $\theta_n^2 n^{\epsilon} \ra\infty$ is sufficient.}

Kolmogorov inequality gives 
\[
\E_{(x,y)}[M(n^{1-\epsilon})^\frac{1}{2}, M(n^{1-\epsilon})>\theta_n\sqrt{n}] = o(1),
\]
if $\theta_n$ goes to zero slowly enough.\footnote{$\theta_n^2 n^{\epsilon} \ra\infty$ is again sufficient.}
In all, the only part that contributes to the limit is 
\[
\frac{\E_{(x,y)}[f(X_n),\tau>n,\gamma_n\leq n^{1-\epsilon}, m_{\gamma_n}\leq \theta_n\sqrt{n}]}{\p_{(x,y)}(\tau>n)}.
\]
We note first that we have 
\begin{align*}
&\sup_{t\in [0,1]}\left|\left(a_n(Z(\gamma_n))\textbf{1}_{\{t\leq \frac{k}{n}\}}+X_n(t)\textbf{1}_{\{t>\frac{k}{n}\}}\right)-X_n(t)\right|\\
&\leq \max_{k\leq \gamma_n}|a_n(Z(k)-Z(\gamma_n))|\leq \frac{2M(\gamma_n)}{\sqrt{n}}\leq 2\theta_n
\end{align*}
on the set $\{m_{\gamma_n}\leq \theta_n\sqrt{n}\}$.
Let 
$$C_n = \{\gamma_n\leq n^{1-\epsilon}, m_{\gamma_n}\leq \theta_n\sqrt{n}\}\in\mathcal{F}_{\gamma_n}.$$ 
It follows from the uniform continuity of $f$ that 
\[
\E_{(x,y)}[f(X^{(n)}), C_n|\tau>n] = o(1) + \E_{(x,y)}[f(a_n(Z_{\gamma_n}),\gamma_n, X^{(n)}), C_n|\tau>n]. 
\]
Therefore, to prove the theorem, it suffices to consider convergence of 
\[
\E_{(x,y)}[f(a_n(Z_{\gamma_n}),\gamma_n, X^{(n)}), C_n|\tau>n].
\]
Note first that we can write with the Markov property
\begin{align*}
&\E_{(x,y)}[f(a_n(Z_{\gamma_n}),\gamma_n, X_n), C_n,\tau>n] \\&=\sum_{k\leq n^{1-\epsilon}}\int_{K_{n,\epsilon}}\p(\gamma_n = k, \tau>k, m_{\gamma_n}\leq \theta_n\sqrt{n},Z_k\in dz)\\&\times\E_z[f(a_n(z),k,X^{(n)})|\tau_z>n-k]\p_z(\tau>n-k).
\end{align*}
As a next step want to show
\begin{equation}\label{eq:help2meander}
\E_z[f(a_n(z),k,X^{(n)})|\tau_z>n-k] = (1+o(1))\E_{(0,0)}[f(W)|\tau^{bm}>1],
\end{equation}
uniformly for $z\in m_{n,\epsilon}, k\leq n^{1-\epsilon}$.

With \eqref{eq:help2meander}, it would follow
\[
\E_{(x,y)}[f(a_n(Z_{\gamma_n}),\gamma_n, X^{(n)}), C_n|\tau>n] = (1+o(1))\E_{(0,0)}[f(W)|\tau^{bm}>1]\frac{\p_{(x,y)}(\tau>n, C_n)}{\p_{(x,y)}(\tau>n)},
\]
and we know from calculations in subsection 3.3 in \citet{dw12}, that
\[
\frac{\p_{(x,y)}(\tau>n, C_n)}{\p_{(x,y)}(\tau>n)} = 1+o(1).
\]
This would finish the proof. Therefore, the rest of this section focuses on proving \eqref{eq:help2meander}.

According to Lemma \ref{thm:coupling}, one can define a walk $S(n)$ and a Brownian motion $B(u)$ in a common probability so that for the event 
\[
A_n = \{\sup_{u\leq n}|S([u])-B(u)|\leq n^{\frac{1}{2}-\gamma}\}.
\]
it holds
\[
\p(A_n^c) = o(n^{-r}),
\]
with $r = r(\delta,\gamma) = -2\gamma-\gamma\delta+\frac{\delta}{2}$.

First, we note, that
\begin{align*}
\E_z[f(a_n(z),k,X^{(n)})|\tau_z>n-k] &= \E_z[f(a_n(z),k,X^{(n)}),A_n|\tau_z>n-k] \\&+ \E_z[f(a_n(z),k,X^{(n)}),A_n^c|\tau_z>n-k]\\&=\E_z[f(a_n(z),k,X^{(n)}),A_n|\tau_z>n-k] +o(1),
\end{align*}
for all $\epsilon>0$ sufficiently small. This is because of $\p(A_n^c) = o(n^{-r})$  and the fact that uniformly for all $z\in {L}_{n,\epsilon}, k\leq n^{1-\epsilon}: n^{-r} = o(\p(\tau_z>n-k))$. The latter follows from the proof of Lemma 18 in \citet{dw12}.

Define 
$$
W^{(n)}(t) = \left(\frac{\int_0^{[nt]} B_sds}{n^{\frac{3}{2}}},\frac{B_{[nt]}}{n^{\frac{1}{2}}}\right).
$$
Due to uniform continuity of $f$ we have in $A_n$, uniformly for all 
$k\leq n^{1-\epsilon}, z\in K_{n,\epsilon}$ that 
\begin{equation}\label{eq:help3meander}
f(a_n(z),k,X^{(n)}) = f(a_n(z),k,W^{(n)}) +o(1).
\end{equation}
We know that 
\begin{equation}\label{eq:help4meander}
|a_n(z)-a_n(z^{\pm})|\leq O(n^{-\gamma}),
\end{equation}
uniformly in $z\in K_{n,\epsilon}$. 
One also has 
\begin{equation}\label{eq:help5meander}
\p_{z^{\pm}}(\tau^{bm}>n) = \chi h(z)n^{-\frac{1}{4}}(1+o(1)) = \p_z(\tau>n)
(1+o(1)),
\end{equation}
uniformly in $k\leq n^{1-\epsilon}, z\in K_{n,\epsilon}.$
These follow from considerations in subsection 3.2 of \citet{dw12}. \\Using \eqref{eq:help3meander},\eqref{eq:help4meander} and \eqref{eq:help5meander} and the scaling property of Brownian motion, it is easy to see that
\[
\E_z[f(a_n(z),k,X^{(n)}), A_n|\tau_z>n-k]
=(1 + o(1))\E_{a_n(z^{\pm})}[f(a_n(z^{\pm}),k,W)||\tau^{bm}>1].
\]
In all, we have shown that it suffices to prove
\begin{equation}\label{eq:help6meander}
\E_{a_n(z^{\pm})}[f(a_n(z^{\pm}),k,W)|\tau^{bm}>1] \ra \E_{(0,0)}[f(W)|\tau^{bm}>1],\quad n\ra \infty
\end{equation}
uniformly for $k\leq n^{1-\epsilon}, z\in K_{n,\epsilon}$.
Notice that \eqref{eq:convintbm}, together with the mild condition \eqref{eq:mild}, implies immediately for every $\beta>0$
\begin{equation}\label{eq:thm12short}
\lim_{n\ra \infty}\sup_{z\in\R_+\times\R_+,\alpha(z)\leq n^{-\beta}}\left|\E_{z}[f(W)|\tau^{bm}>1]-\E_{(0,0)}[f(W)|\tau^{bm}>1]\right| = 0.
\end{equation}

Note also that for $z\in L_{n,\epsilon}$ we have $a_n(z) = O(n^{-\beta})$ uniformly for some $\beta>0$ small, if we choose $\theta_n = n^{-p}$ for some $p>0$ small enough.

We now show that
\begin{equation}\label{eq:help123}
|\E_{a_n(z^{\pm})}[f(a_n(z^{\pm}),k, W)|\tau^{bm}>1]- \E_{a_n(z^{\pm})}[f(W)|\tau^{bm}>1] |\ra 0,
\end{equation}
uniformly in $k\leq n^{1-\epsilon}, z\in L_{n,\epsilon}$ to complete the proof of \eqref{eq:help6meander} by the use of triangle inequality.

The difference of arguments of $f(a_n(z^{\pm}),k, W)$ and $f(W)$ is up to a constant bounded above by
\[
\max_{t,s\in[0,1],|t-s|\leq n^{-\epsilon}}|W_t-W_s| \textrm{ uniformly for all } k\leq n^{1-\epsilon}.
\]
Recalling that $f$ is uniformly continuous, we can write for each $\delta'>0$
\begin{align*}
&|\E_{a_n(z^{\pm})}[f(a_n(z^{\pm}),k, W)|\tau^{bm}>1]- \E_{a_n(z^{\pm})}[f(W)|\tau^{bm}>1] |\\&\leq C\delta' + 2\p_{a_n(z^{\pm})}(\max_{|t-s|\leq n^{-\epsilon}}|W_t-W_s|\geq \delta|\tau^{bm}>1),
\end{align*}
for some $\delta>0$ small enough.
We now show that
\begin{equation}\label{eq:help7meander}
\limsup_{n\ra\infty}\p_{a_n(z^{\pm})}\left(\max_{|t-s|\leq n^{-\epsilon}}|W_t-W_s|\geq \delta|\tau^{bm}>1\right)=0.
\end{equation}
Since the sets 
\[
\Gamma_n=\{\max_{|t-s|\leq n^{-\epsilon}}|W_t-W_s|\geq \delta\},
\]
are closed subsets of the metric space $\left(C[0,1],||\cdot||_{\infty}\right)$, and moreover these sets are monotonously decreasing in $n$ we can always write for fixed $m\in\N$
\[
\limsup_{n\ra\infty}\p_{a_n(z^{\pm})}\left(\max_{|t-s|\leq n^{-\epsilon}}|W_t-W_s|\geq \delta|\tau^{bm}>1\right)\leq \p_{(0,0)}(\Gamma_m|\tau^{bm}>1),
\]
uniformly for $z\in L_{n,\epsilon}$.
The measure $\p_{(0,0)}(\cdot\textrm{ }|\tau^{bm}>1)$ is gained by convergence of densities from measures which are absolutely continuous w.r.t. to the distribution of $W$ on $C[0,\infty)$. This implies that $\p_{(0,0)}(\cdot\textrm{ }|\tau^{bm}>1)$ is also absolutely continuous w.r.t. to the distribution of $W$ on $C[0,\infty)$. As a consequence, $\Gamma_m\searrow\emptyset,m\ra\infty$ $\quad \p_{(0,0)}(\cdot\textrm{ }|\tau^{bm}>1)$-a.s. as well.
This shows \eqref{eq:help7meander}, which in turn implies \eqref{eq:help123}. \eqref{eq:thm12short} and \eqref{eq:help123} imply \eqref{eq:help6meander}, which in turn shows \eqref{eq:help2meander}. This finishes the proof of Theorem \ref{thm:meander}.
\\\indent Proof of Corollary \ref{thm:meandert} is straightforward from a time rescaling. 

\section{Proof of invariance principle for bridges}
We now assume that the random walk moves on $\Z$ and is aperiodic, as described in subsection \ref{subsubsec:assumptionsint}. This section proves Theorem \ref{thm:bridgeinv}.

As a first step, we note the following easy property.
\begin{lemma}[Time reversal]\label{thm:timereversal}
 For all $z = (z_1,z_2),y=(y_1,y_2)\in\Z_+\times\Z, n\in \N$ define $\tilde z =(z_1,-z_2),\tilde y=(y_1,-y_2)$.  We have the following relation
\[
\p_z(Z(n)=y,\tau>n)= \p_{\tilde{y}}(\tilde{Z}(n) = \tilde{z},\tilde{\tau}>n),
\]
where $\tilde{Z}$ is the two-dimensional Markov chain started at $\tilde y$, that satisfies the law of motion
\begin{equation}
    \label{eq:dynamicintmod}
    \tilde Z(l+1) = 
    \begin{pmatrix}
    1 & 1\\
    0 & 1
    \end{pmatrix} \tilde Z(l) + \tilde X_{l+1}\begin{pmatrix}
    0\\
    1
    \end{pmatrix},\quad l\ge 0, \{\tilde X_s, s\ge 1\}\textrm{ are i.i.d},
\end{equation}
with $\tilde X_1$ distributed as $X_1$. 

$\tilde Z$ behaves asymptotically like the Markov chain in \eqref{eq:dynamicint} started at $\tilde y$.
\end{lemma}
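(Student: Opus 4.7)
The plan is to establish the identity via a bijective pathwise time reversal of discrete trajectories, composed with a sign flip of the second coordinate. Since $Z$ is a deterministic affine function of the innovations $(X_1,\ldots,X_n)$ and of the initial state $z$, a trajectory $z^{(0)},\ldots,z^{(n)}$ with $z^{(j)}=(t^{(j)},s^{(j)})$, $z^{(0)}=z$, $z^{(n)}=y$ is admissible for $Z$ if and only if $t^{(j)}-t^{(j-1)}=s^{(j)}$ and $X_j=s^{(j)}-s^{(j-1)}$ for each $j$. Consequently
\[
\pr_z(Z(n)=y,\tau>n)=\sum_{\gamma}\prod_{j=1}^n \pr(X_1=s^{(j)}-s^{(j-1)}),
\]
where $\gamma$ ranges over all such trajectories that in addition satisfy $t^{(j)}>0$ for $j=0,\ldots,n$.

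Given such a $\gamma$, I would associate the reversed and reflected path $\hat z^{(k)}:=(t^{(n-k)},-s^{(n-k)})$ for $k=0,\ldots,n$. By construction $\hat z^{(0)}=\tilde y$ and $\hat z^{(n)}=\tilde z$, and a direct calculation gives
\[
\hat t^{(k+1)}-\hat t^{(k)}=t^{(n-k-1)}-t^{(n-k)}=-s^{(n-k)}=\hat s^{(k)},
\]
\[
\hat s^{(k+1)}-\hat s^{(k)}=-s^{(n-k-1)}+s^{(n-k)}=X_{n-k},
\]
which is precisely the recursion \eqref{eq:dynamicintmod} with innovations $\tilde X_{k+1}:=X_{n-k}$. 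The map $\gamma\mapsto\hat\gamma$ is an involution on admissible trajectories and preserves the positivity constraint on the first coordinate, so $\{\tau>n\}$ translates step by step into $\{\tilde\tau>n\}$. Because $(X_n,\ldots,X_1)$ is i.i.d.\ with the same marginal as $X_1$, the product of step probabilities along $\gamma$ equals the product along $\hat\gamma$, and summing yields the stated identity.

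For the concluding asymptotic remark, iterating \eqref{eq:dynamicintmod} yields $\tilde T(n)=\tilde y_1+n\tilde y_2+\sum_{j=1}^{n-1}(n-j)\tilde X_j$, which compared with the formula $T(n)=\tilde y_1+n\tilde y_2+\sum_{j=1}^n(n-j+1)X_j$ for the forward chain started at $\tilde y$ differs (after identifying $\tilde X_j$ with $X_j$ in law) by $\sum_{j=1}^n X_j=O(\sqrt n)$. Under the scaling $T\mapsto T/n^{3/2}$ used throughout the paper this discrepancy is $O(n^{-1})$, so the two chains share the same scaling limit (the Kolmogorov diffusion) and the same tail asymptotics for $\tau$, which is all that the subsequent bridges argument will need from the phrase ``behaves asymptotically like''. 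No step of this proof is substantively hard; the only point that genuinely requires attention is that a bare time reversal of $Z$ produces increments of the form $-X_j$, and it is precisely the reflection $s\mapsto -s$ built into the definitions of $\tilde z$ and $\tilde y$ that restores i.i.d.\ innovations distributed as $X_1$, so that the reversed trajectory really is a sample path of a chain of the form \eqref{eq:dynamicintmod}.
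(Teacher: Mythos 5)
Your proof is correct and follows essentially the same route as the paper: an explicit pathwise time reversal combined with the sign flip of the second coordinate, shown to be a bijective, probability-preserving map between admissible trajectories that carries the positivity constraint $\{\tau>n\}$ into $\{\tilde\tau>n\}$ (the paper writes this as $\tilde S_l=-S_{n-l}$, $\tilde T_l=T_{n-l}$). Your brief justification of the concluding asymptotic remark via the $O(\sqrt n)$ discrepancy in the $T$-component is a reasonable supplement to what the paper merely asserts.
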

\begin{proof}
We have $(T_0,S_0) = (z_1,z_2)$, the recursion 
\begin{equation}\label{eq:recursion1}
T_l = T_{l-1}+S_l, \quad S_l = S_{l-1}+X_l, \quad 1\leq l\leq n.
\end{equation}
and $(T_n,S_n) = (y_1,y_2)$. Define $(\tilde{T}_0,\tilde{S}_0) = (y_1,-y_2)$
and the recursion 
\begin{equation}\label{eq:recursion2}
\tilde{T}_{l} = \tilde{T}_{l-1} +\tilde{S}_{l-1},\quad  \tilde{S}_{l} = \tilde{S}_{l-1}+X_{n-l+1},\quad 1\leq l\leq n. 
\end{equation}
Then we have $\tilde{S}_{l} = -S_{n-l}$ and $\tilde{T}_l = T_{n-l}$. Therefore, this path transformation is such that the condition of positiveness of $\tilde{T}$ process started at $(y_1,-y_2)$ and ending in $(z_1,-z_2)$
in $n$-steps is preserved throughout, whenever it is preserved for the original process $T$ started at $(z_1,z_2)$
and ending at $(y_1,y_2)$. 


Even more is true: there is a one-to-one and measure-preserving map between paths of $Z$ started at $x$ and ending at $y$ at time $n$, and paths of $\tilde Z$ started at $(y_1,-y_2)$ and ending at $(z_1,-z_2)$
at time $n$. In particular, one can gain results about the asymptotic behavior of $\tilde Z$, conditioned on not leaving $\R_+\times\R$ until time $n$, from the ones about $Z$ under the same condition.  
\end{proof}

\begin{remark}
Lemma \ref{thm:timereversal} clarifies that in equation (13) in \citet{dw12} the function $V'$ is related to the harmonic function for the Markov chain as in \eqref{eq:dynamicint}, where the innovations of the underlying random walk are distributed as $-X_1$. Namely, if $\tilde V$ is the said harmonic function, then $V'(y) = \tilde V((y_1,-y_2))$ for $(y_1,y_2)\in\R_+\times\R$. 
\end{remark}

We show: given $x=(x_1,x_2), y=(y_1,y_2)\in\Z_+\times\Z$ and $0\leq f\leq 1$ bounded and uniformly continuous and measurable w.r.t. $(D[0,t],||\cdot||_{\infty})$ for some $t\in(0,1)$ we have 
\[
\E_{a_n(x_1,x_2)}[f(X_n)|\tau>n, X_n(1)=a_n(y_1,y_2)]\ra \E_{(0,0)}[f(W)f_t(W_t)|\tau^{bm}>1],\quad n\ra\infty,
\]
with some $f_t$ bounded and continuous. In particular, this also establishes a density for the limit process of the scaled bridges up until time $t$.

As a first step note that for $\mathcal{C}$ measurable w.r.t. Borel $\sigma$- algebra on $D[0,t]$ the following holds.
\begin{align*}
\p_{(x_1,x_2)}[\mathcal{C}|\tau>n, T_n=y_1, S_n = y_2] &= \frac{\E_x[\textbf{1}_{\mathcal{C}},\tau>nt,\p_{Z(nt)}(Z(n(1-t))=y,\tau>n(1-t))]}{\p_{x}(\tau>n, T_n=y_1, S_n = y_2)}\\
&=\E_x[\textbf{1}_{\mathcal{C}} f_t^n(X_n(t),a_n(x),a_n(y)|\tau>nt],
\end{align*}
with  the definition
\begin{align*}
\R_+\times\R\ni z\mapsto f_t^n(z,x,y) &= \frac{\p_z(X_n(1-t)=a_n(y),\tau>n(1-t))\p_x(\tau>nt)}{\p_{x}(\tau>n, X_n(1) = a_n(y))}\\
&= \frac{\p_{b_n(z)}(Z(n(1-t))=y,\tau>n(1-t))\p_{x}(\tau>nt)}{\p_{x}(\tau>n, Z(n) = y)},
\end{align*}
where $b_n(\tilde z) = a_n^{-1}(\tilde z)$ for $\tilde z\in \Z^2$.
One can show the following Lemma with the help of relations in \citet{dw12}.
\begin{lemma}\label{thm:density}
For $x,y\in\Z_+\times\Z$ we have 
\[
\lim_{n\ra\infty} \sup_{z\in \Z_+\times\Z,}|f_t^n(z,x,y)-f_t(z)| = 0,
\]
where $f_t (z) = \varkappa^2t^{-\frac{1}{4}}(1-t)^{-2-\frac{1}{4}}h(z_1,-z_2)p_{1}(0,0;z_1,-z_2)$ is continuous and bounded.
\end{lemma}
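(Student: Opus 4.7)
The plan is to substitute three asymptotics drawn from \citet{dw12} into the definition of $f_t^n(z,x,y)$ and verify that powers of $n$ cancel and the remaining constants collapse to $f_t(z)$. Throughout I exploit that $b_n(z) = (n^{3/2}z_1, n^{1/2}z_2)$ sits deep in the cone while $x,y$ remain fixed near the boundary, so that three different regimes of the local limit theorem apply to the three factors below. The survival prefactor is handled directly by Lemma 15 of \citet{dw12}: $\pr_x(\tau > nt) \sim \varkappa V(x)(nt)^{-1/4}$.

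For the denominator $\pr_x(\tau > n, Z(n) = y)$, with both endpoints fixed near the boundary, I would apply the Markov property at time $n/2$, split into a ``fixed start / scaled midpoint'' piece and a ``scaled midpoint / fixed end'' piece, use Lemma \ref{thm:timereversal} to flip the second piece into a ``fixed start / scaled midpoint'' piece for the time-reversed chain $\tilde Z$, apply the one-sided local limit theorem of \citet{dw12} to each piece separately, and pass the sum over the midpoint to a Riemann integral over its rescaled value. This yields a two-sided local limit theorem of the form $\pr_x(\tau > n, Z(n) = y) \sim A_0\, V(x)\, V'(y)\, n^{-5/2}$, with $A_0$ an explicit constant expressible as an integral of products of Kolmogorov meander densities.

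For the numerator $\pr_{b_n(z)}(Z(n(1-t)) = y,\,\tau > n(1-t))$ (scaled start, fixed end), I again apply Lemma \ref{thm:timereversal} to rewrite it as $\pr_{\tilde y}(\tilde Z(n(1-t)) = \widetilde{b_n(z)},\,\tilde\tau > n(1-t))$ (fixed start, scaled end). Split as survival probability $\sim \varkappa V'(y)(n(1-t))^{-1/4}$ times the conditional density of the scaled endpoint. The conditional density, by the local version of the meander invariance principle --- i.e., Theorem \ref{thm:meander} lifted to densities via \eqref{eq:densityKolconvergence} --- converges, after the local volume factor $n^{-2}$, to the Kolmogorov meander density of length $1-t$ from $(0,0)$ evaluated at $(z_1,-z_2)$. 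Using the closed-form expression of this meander density in terms of $h$ and $p_1$ (obtained from time reversal of Doob's $h$-transform of the Kolmogorov diffusion, or from the confluent-hypergeometric formulas recalled in the Appendix), together with the scalings $h(s^{3/2}u, s^{1/2}v) = s^{1/4}h(u,v)$ and $p_s(0,0;u,v) = s^{-2}p_1(0,0;u s^{-3/2}, v s^{-1/2})$, this density is identified with a constant multiple of $(1-t)^{-9/4}\, h(z_1,-z_2)\, p_1(0,0;z_1,-z_2)$, with the multiplicative constant matching $A_0$ from the denominator.

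Substituting these three asymptotics into $f_t^n$, the $n$-exponents $-9/4 - 1/4 + 5/2 = 0$ cancel, $V(x)$ and $V'(y)$ cancel against the denominator, the two factors of $\varkappa$ combine to $\varkappa^2$, the $t$- and $(1-t)$-dependencies collapse to $t^{-1/4}(1-t)^{-9/4}$, and the $z$-dependent part is $h(z_1,-z_2)\,p_1(0,0;z_1,-z_2)$, giving exactly $f_t(z)$. Uniformity in $z$ on the rescaled lattice follows from the uniformity on compacts of the underlying local LLTs in \citet{dw12}; for $\alpha(z)$ large, Gaussian decay of $p_1(0,0;\cdot)$ dominates the polynomial growth of $h$, sending both $f_t^n$ and $f_t$ to zero uniformly. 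Continuity and boundedness of $f_t$ are immediate from continuity of $h$ and $p_1$ together with the same Gaussian decay. The main obstacle I anticipate is the closed-form identification of the Kolmogorov meander density in terms of $h$ and $p_1$ --- in particular, correctly tracking the sign flip on the second coordinate induced by time reversal of the Kolmogorov diffusion. Once this identification is fixed, the rest reduces to bookkeeping of exponents and constants, which combine cleanly thanks to the homogeneity of $h$, the self-similarity of $p_s$, and the power-law form of the survival asymptotic.
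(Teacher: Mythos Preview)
Your overall structure—three asymptotics plugged into the three factors of $f_t^n$, with $n$-powers cancelling—is exactly what the paper does, and your bookkeeping of exponents and constants is correct. But you take a much longer road than necessary, and one stretch of it has a gap.

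The paper's proof is three lines of citation. After the time-reversal rewrite $\pr_{b_n(z)}(Z(n(1-t))=y,\tau>n(1-t))=\pr_{\tilde y}(\tilde Z(n(1-t))=b_n(\tilde z),\tilde\tau>n(1-t))$, it simply invokes three ready-made results from \citet{dw12}: relation~(12) there is the conditional \emph{local} limit theorem
\[
\pr_{\tilde y}\bigl(\tilde Z(n(1-t))=b_n(\tilde z),\tilde\tau>n(1-t)\bigr)\sim \varkappa\, n^{-9/4}(1-t)^{-9/4}\,V'(y)\,h(\tilde z)\,p_1(0,0;\tilde z),
\]
relation~(7) is the survival asymptotic $\pr_x(\tau>nt)\sim\varkappa V(x)(nt)^{-1/4}$, and relation~(13) is the two-sided local limit theorem $\pr_x(\tau>n,Z(n)=y)\sim n^{-5/2}V(x)V'(y)$. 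Substituting these three gives $f_t(z)$ immediately. You propose instead to \emph{re-derive} (12) and (13): splitting at $n/2$ for the denominator, and lifting Theorem~\ref{thm:meander} to densities for the numerator. That is unnecessary, since both are already established in \citet{dw12}.

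More importantly, your derivation of the numerator asymptotic has a gap. Theorem~\ref{thm:meander} is a \emph{functional} (weak) limit theorem; it does not by itself give convergence of one-point densities on the lattice. You write ``Theorem~\ref{thm:meander} lifted to densities via \eqref{eq:densityKolconvergence}'', but \eqref{eq:densityKolconvergence} concerns the Kolmogorov diffusion density $\bar p_t$, not the random-walk local probabilities; it cannot upgrade weak convergence of $X^{(n)}$ to a local limit theorem. The passage from weak to local convergence for the conditioned walk is a separate, nontrivial result—precisely relation~(12) of \citet{dw12}—and your argument should invoke it directly rather than try to manufacture it from the functional statement. Once you do that, your ``closed-form identification of the meander density'' step also evaporates: the factor $h(\tilde z)p_1(0,0;\tilde z)$ is already the output of (12), not something you need to match by hand.
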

\begin{proof}

We have
\begin{align*}
&f_t^n(z,x,y) = \frac{\p_{b_n(z)}(Z(n(1-t))=y,\tau>n(1-t))\p_x(\tau>nt)}{\p_x(\tau>n, T(n) = y_1,Z(n) = y_2)}\\
&= \frac{\p_{\tilde{y}}(\tilde{Z}(n(1-t))=b_n(\tilde{z}),\tau>n(1-t))\p_x(\tau>nt)}{\p_x(\tau>n, T(n) = y_1,S(n) = y_2)},
\end{align*}

where in the last step we used Lemma \ref{thm:timereversal}.\\\indent First, the conditional local limit theorem from \citet{dw12} (see relation $(12)$ there) yields uniformly for all $z\in\Z_+\times\Z$ (with $\tilde z = (z_1,-z_2)$)
\begin{align}\label{eq:help1bridge}
\nonumber
\p_{\tilde{y}}(\tilde{Z}(n(1-t))&=b_n(\tilde{z}),\tilde{\tau}>n(1-t))\\ &\sim \varkappa n^{-2-\frac{1}{4}}(1-t)^{-2-\frac{1}{4}}V'(y)h(\tilde z)p_{1}(0,0;\tilde z).
\end{align}
Note that we also used the scaling property of $p_t(x,y,u,v)$ here. 
\\\indent Second, we get by using (7) in \citet{dw12}
\begin{equation}\label{eq:help2bridge}
\p_x(\tau>nt)\sim\varkappa V(x)n^{-
\frac{1}{4}}t^{-\frac{1}{4}}.
\end{equation}
\indent Finally, we use (13) in \citet{dw12} 
\begin{equation}\label{eq:help3bridge}
\p_x(\tau>n, T(n) = y_1,S(n) = y_2)\sim n^{-2-\frac{1}{2}}V(x)V'(y).
\end{equation}
We get the result about $f_t$ by combining \eqref{eq:help1bridge}, \eqref{eq:help2bridge} and \eqref{eq:help3bridge}. 
\end{proof}

\vspace{3mm}

We continue with the proof of the convergence to the bridge. For a given functional $g$ on $(D[0,1],||\cdot||_{\infty})$, which is uniformly continuous and bounded, we can write with the triangle inequality 
\begin{align*}
&|\E_x[g(X)f_t^n(X_{n}(t),x,y)|\tau>nt]-\E_{(0,0)}[g(W)f_t(W_t)|\tau^{bm}>t]|\\
&\leq \E_x[g(X)|f_t^n(X_{n}(t),x,y)-f_t(X_{n}(t))||\tau>nt] \\
&+\left|\E_x[g(X)f_t(X_{n}(t))|\tau>nt]-\E_{(0,0)}[g(W)f_t(W_t)|\tau^{bm}>t]\right|\\
&=: A_n + B_n.
\end{align*}
$A_n = o(1)$ follows from Lemma \ref{thm:density} and the boundedness of $g$. $B_n = o(1)$ can be proven using Skorohod's representation theorem. For more details, take some suitable probability space and define there $X_{n}(\cdot)$ with distribution $\p(\cdot\textrm{ }|\tau>nt)$ and $W_{\cdot}$ with distribution $\p(\cdot\textrm{ }|\tau^{bm}>t)$ so that $X_{n}(\cdot)\ra W_{\cdot},\ n\ra \infty$ a.s. This is possible due to Corollary \ref{thm:meandert}. It then follows due to continuity and boundedness of $f_t$ and $g$ with the new measure, whose expectation we denote by $E$, that
\begin{align*}
&|\E_x[g(X)f_t(X_{nt})|\tau>nt]-\E_{(0,0)}[g(W)f_t(W_t)|\tau^{bm}>t]|\\& = |\E[g(X)f_t(X_{nt})] - \E[g(W)f_t(W_t)]|\\
& \leq C\E[|f_t(X_{nt})-f_t(W_t)|] + \E[f_t(W_t)|g(W)-g(X_{nt})|].
\end{align*}
Dominated convergence, together with boundedness and continuity of $f,g$ gives the result. 

In all, we have proven the following.
\begin{lemma}\label{thm:convpartial}
For each $t\in (0,1)$ and $x,y\in \Z_+\times\Z$ it holds on $D[0,t]$ that
\[
(X_n|X_n(1)=a_n(y),\tau>n)_{a_n(x)} \textrm{ converges weakly as }n\ra\infty.
\]
The limit process $Y|_{D[0,t]}$ is absolutely continuous w.r.t. Kolmogorov meander measure $\p_0(W\in\cdot\textrm{ }|\tau^{bm}>t)$ and has continuous and bounded density $f_t$. 
\end{lemma}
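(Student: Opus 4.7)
The plan is to reduce weak convergence of the conditioned bridges on $D[0,t]$ to the invariance principle for the meander of length $t$ (Corollary \ref{thm:meandert}), coupled with control of the Radon--Nikodym derivative that arises when one further conditions a meander-type measure on its endpoint. Concretely, I would first apply the Markov property at time $\lfloor nt\rfloor$ to rewrite, for any bounded and uniformly continuous $g:(D[0,t],\|\cdot\|_\infty)\to\R$,
\[
\E_{a_n(x)}\bigl[g(X_n)\,\big|\,\tau>n,\,X_n(1)=a_n(y)\bigr] = \E_{a_n(x)}\bigl[g(X_n)\,f_t^n(X_n(t),x,y)\,\big|\,\tau>nt\bigr],
\]
where $f_t^n$ is the density ratio defined just above the statement. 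This reduces matters to comparing the right-hand side with the target $\E_{(0,0)}[g(W)f_t(W_t)\,|\,\tau^{bm}>t]$.

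Next, I would insert $f_t(X_n(t))$ into the expectation and split via the triangle inequality:
\[
A_n = \E_{a_n(x)}\bigl[|g(X_n)|\,|f_t^n(X_n(t),x,y)-f_t(X_n(t))|\,\big|\,\tau>nt\bigr],
\]
\[
B_n = \bigl|\E_{a_n(x)}[g(X_n)f_t(X_n(t))|\tau>nt] - \E_{(0,0)}[g(W)f_t(W_t)|\tau^{bm}>t]\bigr|.
\]
$A_n\to 0$ follows from the uniform-in-$z$ convergence $f_t^n\to f_t$ of Lemma \ref{thm:density} combined with boundedness of $g$. For $B_n$, I would invoke Corollary \ref{thm:meandert} and Skorohod representation to realize $X_n$ (under the meander law of length $t$) and $W$ (under the Kolmogorov meander law of length $t$) on a common probability space with $X_n\to W$ a.s.\ in $\|\cdot\|_\infty$; then boundedness and continuity of $f_t$ and $g$ together with dominated convergence give $B_n\to 0$.

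The identification of the limit density $f_t$ itself proceeds by combining three asymptotic ingredients from \citet{dw12}: the conditional local limit theorem for $\p_z(Z(n(1-t))=y,\tau>n(1-t))$, where Lemma \ref{thm:timereversal} is used to swap $Z$ for $\tilde Z$ before applying the theorem; the survival asymptotic $\p_x(\tau>nt)\sim\varkappa V(x)(nt)^{-1/4}$; and the joint survival-and-endpoint local limit theorem for $\p_x(\tau>n,\,Z(n)=y)$. Algebraic cancellation produces the explicit closed form for $f_t$, from which continuity and boundedness are immediate, and absolute continuity of $Y|_{D[0,t]}$ with respect to the Kolmogorov meander measure of length $t$ is then built into the construction.

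I expect the main obstacle to lie in Lemma \ref{thm:density}: the cited asymptotics from \citet{dw12} must be shown to hold uniformly in $z\in\Z_+\times\Z$ and not merely pointwise, which requires care near the boundary of $\Z_+\times\Z$ where either $h(\tilde z)$ or the killed density can become small. Once that uniformity is secured, the remainder of the argument is an essentially formal combination of Corollary \ref{thm:meandert} with the Skorohod step.
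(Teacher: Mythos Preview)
Your proposal is correct and follows essentially the same route as the paper: Markov property at time $\lfloor nt\rfloor$ to introduce $f_t^n$, the triangle-inequality split into $A_n$ and $B_n$, Lemma~\ref{thm:density} for $A_n$, and Corollary~\ref{thm:meandert} together with Skorohod representation and dominated convergence for $B_n$. The obstacle you flag---uniformity of the local limit asymptotics in Lemma~\ref{thm:density}---is exactly the technical point the paper resolves by invoking the uniform conditional local limit theorem (relation~(12)) from \citet{dw12} after applying time reversal.
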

In particular, this implies tightness of the measures $(X_n|X_n(1)=a_n(y),\tau>n)_{a_n(x)}$ in $D[0,t]$. Since we know the density of $\p_0(W_t\in\cdot|\tau^{bm}>t)$ (see Proposition \ref{thm:convergencekolmogorovmeander} in the appendix), we get a Lebesgue density for $Y_t$.
\begin{corollary}\label{thm:densitylimit}
$Y_t$ has Lebesgue density
\begin{align*}
g_t(z) &= f_t(z)p_t^+(z_1,z_2) \\&= \varkappa t^{-\frac{1}{4}}(1-t)^{-2-\frac{1}{4}}h(z_1,-z_2)p_1(0,0;z_1,-z_2)\pr_{(z_1,z_2)}(\tau^{bm}>1-t)\bar h(t,u,-v). 
\end{align*}
\end{corollary}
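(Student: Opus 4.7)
The plan is to combine Lemma \ref{thm:convpartial} with the explicit Lebesgue density of the marginal of the Kolmogorov meander at time $t$ started from $(0,0)$. By Lemma \ref{thm:convpartial}, $Y_t$ has density $f_t$ with respect to the marginal at time $t$ of $\pr_{(0,0)}(W\in\cdot\mid\tau^{bm}>1)$, so denoting that marginal's Lebesgue density by $p_t^+$ we immediately get the decomposition
\[
g_t(z) = f_t(z)\, p_t^+(z_1,z_2)
\]
as the desired Lebesgue density of $Y_t$.

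To identify $p_t^+$ explicitly, I would cite Proposition \ref{thm:convergencekolmogorovmeander} in the appendix; if one instead wishes to rederive it on the spot, the argument is to pass to the limit $(x,y)\searrow(0,0)$ in the explicit meander density
\[
m_t(x,y;u,v) = \frac{\bar p_t(x,y;u,v)\,\pr_{(u,v)}(\tau^{bm}>1-t)}{\pr_{(x,y)}(\tau^{bm}>1)},
\]
using the asymptotics \eqref{eq:densityKolconvergence} together with \eqref{eq:helpk1}. The $h(x,y)$ factors cancel and one reads off
\[
p_t^+(u,v) = \varkappa^{-1}\,\bar h(t,u,-v)\,\pr_{(u,v)}(\tau^{bm}>1-t).
\]
The dominating function provided by \eqref{eq:needed} supplies the uniform integrability required to pull the limit under the integral, so that convergence of densities (and not merely of finite-dimensional distributions) is in fact available.

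The claimed formula for $g_t(z)$ then drops out by multiplying the above $p_t^+$ with $f_t(z) = \varkappa^{2}\,t^{-1/4}(1-t)^{-9/4}\,h(z_1,-z_2)\,p_1(0,0;z_1,-z_2)$ from Lemma \ref{thm:density}. Nothing substantive is involved beyond Lemma \ref{thm:convpartial}, Lemma \ref{thm:density} and the appendix's density identification; the only bookkeeping point worth flagging is that $Y_t$ must be viewed as the value at an interior time $t$ of an excursion over $[0,1]$, so the relevant marginal is that of the length-$1$ Kolmogorov meander, and this is exactly what produces the factor $\pr_{(u,v)}(\tau^{bm}>1-t)$ in $p_t^+$.
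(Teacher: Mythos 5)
Your route is the paper's own one-line argument: multiply the bridge density $f_t$ from Lemma \ref{thm:convpartial} by the Lebesgue density of a Kolmogorov meander marginal at time $t$. The gap is in the identification of the reference measure, and it is load-bearing. Lemma \ref{thm:convpartial} --- and the computation producing $f_t^n$, which conditions on $\{\tau>nt\}$ and invokes Corollary \ref{thm:meandert} --- exhibits $f_t$ as the density of $Y|_{D[0,t]}$ with respect to the meander of length $t$, i.e.\ with respect to $\pr_{(0,0)}(W\in\cdot\mid\tau^{bm}>t)$, not with respect to the length-one meander $\pr_{(0,0)}(W\in\cdot\mid\tau^{bm}>1)$ as you assert. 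This matters quantitatively: the terminal marginal of the length-$t$ meander is, by Corollary \ref{thm:densitymeander-t} with $s=t$, proportional to $t^{\frac14}\bar h(t,z_1,-z_2)$ and carries no survival factor, whereas the time-$t$ marginal of the length-one meander (Proposition \ref{thm:convergencekolmogorovmeander}) is proportional to $\bar h(t,z_1,-z_2)\pr_{(z_1,z_2)}(\tau^{bm}>1-t)$. Your product pairs $f_t$, computed relative to the length-$t$ meander, with the length-one marginal, i.e.\ it mixes two different reference measures.

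If you insist on the length-one meander as reference, the correct Radon--Nikodym derivative of $Y|_{D[0,t]}$ is not $f_t(W_t)$ but, up to constants, $t^{\frac14}f_t(W_t)/\pr_{W_t}(\tau^{bm}>1-t)$, since restricted to $[0,t]$ the length-one meander is absolutely continuous with respect to the length-$t$ meander with density $t^{-\frac14}\pr_{W_t}(\tau^{bm}>1-t)$ (Markov property, Brownian scaling and Lemma 15 of \citet{dw12}). Multiplying by your $p_t^+$ then cancels the survival probability, and either route gives a Lebesgue density proportional to $(1-t)^{-\frac94}h(z_1,-z_2)\,p_1(0,0;z_1,-z_2)\,\bar h(t,z_1,-z_2)$, with no factor $\pr_{(z_1,z_2)}(\tau^{bm}>1-t)$. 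Your derivation reproduces the displayed formula only because the same reference-measure mismatch is present in the printed statement; as a proof, the identification of the reference measure (and hence of which meander marginal to multiply by) needs to be made consistent before the product formula can be asserted.
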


It is obvious from the construction and the weak convergence that the laws of $(Y_s)_{s\in(0,1)}$ are consistent, in the sense that for $s<t$ we have 
\[
Y_t|_{C[0,s]} \stackrel{d}{=} Y_s.
\]
Moreover, if we take the process $\tilde Z$ as defined in \eqref{eq:dynamicintmod} from Lemma \ref{thm:timereversal}, and denote by $\left(\tilde{X}^{(n)}\right)_{n\in\Z_+}$, the process defined through $\tilde X^{(n)}(t) = a_n(\tilde Z([nt])), t\in [0,1]$ then the following result follows immediately. 
\begin{lemma}
For each $t\in (0,1)$ and $x,y\in\R_+\times\R$ we have on $\left(D[0,t],||\cdot||_{\infty}\right)$
\[
(\tilde{X}_n|\tilde{X}_n(1)=a_n(y),\tau>n)_{a_n(x)}\textrm{ converges weakly as }n\ra\infty.
\]
\end{lemma}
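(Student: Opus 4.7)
The plan is to repeat the proof of Lemma \ref{thm:convpartial} essentially verbatim with $Z$ replaced by $\tilde Z$ throughout. The chain $\tilde Z$ has the same asymptotic structure as $Z$: its second coordinate $\tilde S$ is a centered random walk with $(2+\delta)$-moment, and its first coordinate $\tilde T$ is (up to a one-step shift, which is absorbed by the $n^{3/2}$-scaling) the corresponding integrated random walk. Consequently all the asymptotic machinery used for $Z$ admits a direct analog for $\tilde Z$.

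Concretely, the derivation of Lemma \ref{thm:density} relies on three inputs from \citet{dw12}: the conditional local limit theorem (equation (12) there), the exit-time tail $\pr_x(\tau > n) \sim \varkappa V(x) n^{-1/4}$ (equation (7)), and the joint bridge asymptotic (equation (13)). For $\tilde Z$ the first two are immediate from applying the \citet{dw12} results to $\tilde Z$. For the bridge asymptotic I would invoke Lemma \ref{thm:timereversal} to write
\[
\pr_x^{\tilde Z}(\tilde Z(n) = y, \tilde\tau > n) = \pr_{\tilde y}^Z(Z(n) = \tilde x, \tau > n)
\]
and use equation (13) of \citet{dw12} on the right-hand side; as the Remark following Lemma \ref{thm:timereversal} points out, the roles of $V$ and $V'$ simply swap under the reversal. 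Together these produce a continuous, bounded limit density $\tilde f_t$ playing the role of $f_t$.

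With the analog of Lemma \ref{thm:density} in hand, the remainder of the argument goes through as in Lemma \ref{thm:convpartial}: applying the Markov property of $\tilde Z$ at time $nt$ rewrites the conditional expectation as an expectation of the test functional weighted by the analog density $\tilde f^n_t$, and the Skorohod representation argument concludes weak convergence on $D[0,t]$ provided one has the meander invariance principle (Theorem \ref{thm:meandert}) for $\tilde Z$. That meander invariance principle for $\tilde Z$ is in turn established by rerunning the proof of Theorem \ref{thm:meander}: the Hungarian coupling Lemma \ref{thm:coupling} applies to $\tilde S$ without modification, and the candidate scaling limit is the same Kolmogorov meander at $(0,0)$ from Proposition \ref{thm:Kolmomeander}, because that limit is defined intrinsically in terms of Brownian motion and its integral.

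The only genuine obstacle is bookkeeping: tracking how the pairing between $V$ and $V'$ flips under time-reversal so that the constants in the analog of Lemma \ref{thm:density} come out right, and verifying that the one-step offset between $\tilde T$ and the canonical integrated walk is invisible at the relevant scale. Both points are minor because the $n^{-3/2}$-scaling swallows any finite shift in the first coordinate, and the Remark after Lemma \ref{thm:timereversal} has already handled the identification of the harmonic functions. Hence the statement genuinely follows by the same proof as before, which is the sense in which it is "immediate".
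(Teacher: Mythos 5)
Your proposal is correct and matches the paper's intent: the paper treats this lemma as following immediately from Lemma \ref{thm:convpartial} together with Lemma \ref{thm:timereversal} (and the observation there that $\tilde Z$ behaves asymptotically like the chain in \eqref{eq:dynamicint}), which is exactly the ``rerun the bridge argument for $\tilde Z$, transferring the local limit and exit-time asymptotics via time reversal and the $V$/$V'$ swap'' plan you describe. Your additional bookkeeping remarks (the one-step offset in $\tilde T$ being negligible at the $n^{3/2}$ scale, the same Kolmogorov meander limit) are consistent with, and merely make explicit, what the paper leaves implicit.
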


From this we get tightness of $(\tilde{X}_n|\tilde{X}_n(1)=a_n(y),\tau>n)_{a_n(x)}$ in $D[0,t]$, which amounts to tightness of the processes $(X^{(n)}|X^{(n)}(1)=a_n(y),\tau>n)_{a_n(x)}$ in $D[1-t,1]$. Using the characterizations from Theorems 15.3 and 15.5 from \citet{billingsley} we get tightness on the whole $D[0,1]$. In all, we get weak convergence to a process in $D[0,1]$, whose density at any point is characterized in Corollary \ref{thm:densitylimit}. From Theorem 15.5 in \citet{billingsley} we get automatically that the weak limit has to be continuous, i.e. an element of $C[0,1]$.

\section{Proof of convergence of $h$-transforms}

\subsection{Proof of \eqref{eq:htransformKol}.}

Let $f$ be a uniformly continuous, bounded function on $C[0,1]$ with values in $[0,1]$. Fix some $S>0$. It holds 
\begin{equation}
\E_{(x,y)}^{(h)}[f(W)] = \E_{(x,y)}^{(h)}[f(W), |W(1)|\leq S]+ \E_{(x,y)}^{(h)}[f(W), |W(1)|> S].
\end{equation}
We have the following estimate with the help of \eqref{eq:needed} for all $(x,y)\in\R^2_+$ such that $\alpha(x,y)\le 1$. 
\begin{align*}
\E_{(x,y)}^{(h)}[f(W), |W(1)|> S]& = \frac{1}{h(x,y)}\E_{(x,y)}[f(W)h(W(1)),\tau^{bm}>1, |W(1)|> S]\\
&\leq C\int_{(u,v)\in\R_+\times\R:|(u,v)|>S}|\psi(u,v)h(u,v)|d(u,v),
\end{align*}
uniformly for $|(x,y)|\leq \frac{1}{2}, x,y>0$ and $S>0$.

In particular, it follows from the integrability of $\psi\cdot h$ that
\begin{equation}\label{eq:convhtransformsKDuniformzero}
\sup_{|(x,y)|\leq \frac{1}{2}, x,y>0}\E_{(x,y)}^{(h)}[f(W), |W(1)|> S] = o(1),\quad S\ra \infty.
\end{equation}
We have 
\begin{align*}
\E_{(x,y)}^{(h)}[f(W), |W(1)|\leq S]& = \frac{1}{h(x,y)}\E_{(x,y)}[f(W)h(W(1)),\tau^{bm}_{(x,y)}>1, |W(1)|\leq S]\\&
=\frac{\p_{(x,y)}(\tau^{bm}>1)}{h(x,y)}\E_{(x,y)}[f(W)h( W(1)),|W(1)|\leq S|\tau^{bm}>1].
\end{align*}
We recall \eqref{eq:helpk1}. Moreover, the function $$f(W)h( W(1))\textbf{1}_{\{|W(1)|\leq S\}}$$ is bounded and the set of its discontinuities is a null set with respect to the meander measure $\p_{(0,0)}(\cdot|\tau^{bm}>1)$ constructed by the weak convergence of $\p_{(x,y)}(\cdot|\tau^{bm}>1)$ as $x,y\searrow 0$. In particular, 
\begin{equation*}
\E_{(x,y)}[f(W)h( W(1)),|W(1)|\leq S|\tau^{bm}>1]\ra \E_{(0,0)}[f(W)h(W(1)),|W(1)|\leq S|\tau^{bm}>1],
\end{equation*}
because of the convergence of the meander of Kolmogorov diffusion as the start point converges to $(0,0)$. Monotone convergence delivers
\begin{equation}
\E_{(0,0)}[f(W)h(W(1)),|W(1)|\leq S|\tau^{bm}_{(0,0)}>1]\ra \E_{(0,0)}[f(W)h(W(1))|\tau^{bm}_{(0,0)}>1], \quad S\ra \infty.
\end{equation}

Combining this with \eqref{eq:convhtransformsKDuniformzero} shows 
\[
\E_{(x,y)}^{(h)}[f(W)]\ra \varkappa\E_{(0,0)}[f(W)h(W(1))|\tau^{bm}_{(0,0)}>1].
\]
This finishes the proof.

\subsection{Proof of Theorem \ref{thm:htransformintegrated}.}
We first note some sharp estimates between the harmonic function $V$ as constructed in \citet{dw12}, and the harmonic function $h$ for the Kolmogorov diffusion conditioned to stay positive. 

\begin{lemma}
\label{thm:hVestimate} Assume that $\E[|X|^{\frac{5}{2}+\epsilon_0}]<\infty$ for some $\epsilon>0$. It holds
\begin{equation}
    \label{eq:Vh-est}
    \left|V(z)-h(z)\right|\le C(1+\min\{1,\alpha(z)^{-\frac{3}{2}-\delta}\}),\quad z\in\R_+\times\R.
\end{equation}


\end{lemma}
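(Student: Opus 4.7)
My plan is to prove Lemma \ref{thm:hVestimate} by combining the martingale construction of $V$ in \citet{dw12} with a Taylor expansion of $h$ that exploits $\mathcal{A}h\equiv 0$ and the scaling identity $h(x,y)=x^{1/6}h(1,x^{-1/3}y)$. Recall that in \citet{dw12} the positive harmonic function $V$ for the killed chain $Z$ is constructed as a limit $V(z)=\lim_n\E_z[f(Z(n));\tau>n]$ for an explicit polynomial correction $f$ of $h$, from which one deduces a telescoping identity
\[
V(z)\;=\;h(z)\;+\;\E_z\!\sum_{k=0}^{\infty}R(Z(k))\mathbf{1}_{\{\tau>k\}}\;+\;\mathcal{E}(z),
\]
where $R(w):=\E_w[h(Z(1))]-h(w)$ quantifies the defect of $h$ from being harmonic for the discrete killed chain and $\mathcal{E}(z)$ collects the boundary overshoot contributions.

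The central estimate is then a sharp bound on $R(w)$. Taylor-expanding $h$ around $w$ to order three and using $\mathcal{A}h=0$ on $\R_+\times\R$ cancels the first- and second-order terms in the expectation. The surviving remainder involves third derivatives of $h$ and third moments of $X$, which is where the assumption $\E|X|^{5/2+\epsilon_0}<\infty$ becomes critical: I would truncate $X$ at scale $\alpha(w)$ and handle the tail via H\"older with exponent $5/2+\epsilon_0$, yielding a bound of the form $|R(w)|\le C\alpha(w)^{-5/2-\delta'}$ for $\alpha(w)\ge 1$ and some $\delta'>0$. The required derivative bounds $|\partial_x^a\partial_y^b h(w)|\le C\alpha(w)^{1/2-3a-b}$ on $\{\alpha(w)\ge 1\}$ follow by differentiating the scaling identity and using regularity of the profile $h(1,\cdot)$.

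Summing the telescope then yields $\sum_{k\ge 0}\E_z[|R(Z(k))|;\,\tau>k]\le C$ uniformly in $z$, with the aid of the decay $\p_z(\tau>k)\le Ch(z)k^{-1/4}$ from Lemma 18 of \citet{dw12} and moment estimates for the chain conditioned on survival. The overshoot term $\mathcal{E}(z)$ is bounded by standard overshoot estimates under a $(5/2+\epsilon_0)$-moment, giving the uniform $O(1)$ bound $|V(z)-h(z)|\le C$. The improved factor $\min\{1,\alpha(z)^{-3/2-\delta}\}$ near $z=(0,0)$ is then read off from the fact that both $V(z)$ and $h(z)$ are $O(\alpha(z)^{1/2})$ near the boundary, so only short paths contribute to the correction series before $\tau$ and the accumulated defect inherits the smallness of $\alpha(z)$ at the required rate.

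I expect the main obstacle to be twofold: first, handling the discretization near the boundary $\{x=0\}$, where the derivative scaling of $h$ degenerates and the chain may overshoot into the killed region, forcing the Taylor expansion to be cut off and replaced by direct overshoot estimates; and second, calibrating the truncation of $X$ so that the $(5/2+\epsilon_0)$-moment is just sufficient to make the correction series absolutely convergent---the exponent $5/2$ being the borderline threshold at which this strategy begins to work.
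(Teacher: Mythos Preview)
Your overall strategy---define the defect $R(w)=\E_w[h(Z(1))]-h(w)$, telescope, bound $|R(w)|$ via Taylor expansion using $\mathcal{A}h=0$ and the derivative bounds $|\partial_x^a\partial_y^b h|\le C\alpha^{1/2-3a-b}$, then sum---is exactly what the paper does, except that the paper simply cites Lemmas 7 and 11 of \citet{dw12} rather than spelling out these steps. Your $R$ is the paper's $f$, and the paper's key inequality $\sum_{l<n}\E[|f(Z(l))|]\le C(1+n^{1/4-\delta/2})$ (which is bounded precisely when $\delta>1/2$, i.e.\ under a $(5/2+\epsilon_0)$-moment) plays the role of your summability claim. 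One small difference: the paper sums $\E_z[|f(Z(l))|]$ along the \emph{unconditioned} chain (dropping the indicator $\{\tau>l\}$ as an upper bound), which is simpler than your proposed route via $\p_z(\tau>k)\le Ch(z)k^{-1/4}$ and conditioned moment estimates.

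You have misread the role of the factor $\min\{1,\alpha(z)^{-3/2-\delta}\}$: it is small for \emph{large} $\alpha(z)$, not near $(0,0)$, and in the paper it arises directly from the pointwise bound on $|f(z)|$ (Lemma 7 of \citet{dw12}), which appears as an extra $f(z)$ term on the right-hand side of the telescoped inequality. Since the statement also carries a $1+$ in front, this factor is in any case redundant for the uniform bound $|V-h|\le C$; your sketched justification for ``improvement near $(0,0)$'' is therefore both misdirected and unnecessary. The overshoot term $\mathcal{E}(z)$ you introduce is not singled out in the paper's argument---in the \citet{dw12} construction, boundary effects are absorbed into the limit defining $V$ rather than isolated separately.
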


\begin{proof}
This is an implication of the arguments in the proof of Lemma 11 in \citet{dw12} and Lemma 7 in \citet{dw12}. For some details, the proof of Lemma 11 there establishes that 

\[
\E_z[h(Z(n)),\tau>n]\le h(z)+\sum_{l=0}^{n-1}\E[|f(Z(l))|],
\]
for the function $f(z) = \E_z[h(Z(1))]-h(z), z\in \R^2$ (see (9), (30) and (31) in \citet{dw12}). Lemma 11 goes on to establish that 
\begin{equation}
    \label{eq:dw12help}
    \sum_{l=0}^{n-1}\E[|f(Z(l))|]\le C(1 + n^{\frac{1}{4}-\frac{\delta}{2}}),
\end{equation}
which delivers 
\begin{equation}
    \label{eq:dw12help2}
    |\E_z[h(Z(n)),\tau>n]-h(z)|\le f(z)+C(1 + n^{\frac{1}{4}-\frac{\delta}{2}}). 
\end{equation}
We note here that the equation at the top of page 181 in \citet{dw12} is actually equivalent to \eqref{eq:dw12help} whenever $\delta>0$ is small enough. The proof of Lemma 11 in \citet{dw12} establishes in fact the more general inequality \eqref{eq:dw12help}.   
\eqref{eq:dw12help2} together with Lemma 7 in \citet{dw12} delivers the statement.
\end{proof}

Lemma \ref{thm:hVestimate} has the following important Corollary, which we use repeatedly throughout the proof. 

\begin{corollary}
Assume that $\E[|X|^{\frac{5}{2}+\epsilon_0}]<\infty$. Then the following inequalities hold true for all $\Delta>0$ small enough.


\begin{equation}
    \label{eq:imp1Vh-est}
    |V(z)-h(z)|\le C\alpha(z)^{\frac{1}{2}-\Delta},\quad \alpha(z)\ge 1, z\in \R_+\times\R. 
\end{equation}
In particular, it follows 
\begin{equation}
    \label{eq:imp2Vh-est}
    V(z)\le C\alpha(z)^{\frac{1}{2}},\quad \alpha(z)\ge 1,z\in\R_+\times\R.
\end{equation}

\end{corollary}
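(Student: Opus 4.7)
The plan is to combine the uniform bound from Lemma \ref{thm:hVestimate} with the standard polynomial envelope for the continuous harmonic function $h$. Since $\min\{1,\alpha(z)^{-3/2-\delta}\}\le 1$, Lemma \ref{thm:hVestimate} already delivers
\[
|V(z)-h(z)|\le 2C,\quad z\in\R_+\times\R,
\]
so the difference is uniformly bounded in $z$. This is the only input from the preceding lemma that I will use.

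For \eqref{eq:imp1Vh-est}, I would simply note that whenever $\alpha(z)\ge 1$ and $0<\Delta<\tfrac{1}{2}$ we have $\alpha(z)^{\frac{1}{2}-\Delta}\ge 1$, so the uniform bound above is automatically dominated by $2C\,\alpha(z)^{\frac{1}{2}-\Delta}$. This is essentially a repackaging of the constant bound in a form more convenient for later use, where the right-hand side can be absorbed into powers of $n$ through $\alpha(Z_k)$.

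For \eqref{eq:imp2Vh-est}, I would apply the triangle inequality $V(z)\le h(z)+|V(z)-h(z)|$ and bound each term separately. The continuous harmonic function satisfies the scaling relation $h(x,y)=x^{1/6}h(1,x^{-1/3}y)$ recorded in the paper, and together with Lemma 6 in \citet{dw12} (the same bound that was invoked earlier in the proof of Theorem \ref{thm:meander} to write $h(Z_{\gamma_n})\le C m_{n^{1-\epsilon}}^{1/2}$) this gives
\[
h(z)\le C\alpha(z)^{1/2},\quad \alpha(z)\ge 1.
\]
Combining with \eqref{eq:imp1Vh-est} yields
\[
V(z)\le C\alpha(z)^{1/2}+C\alpha(z)^{\frac{1}{2}-\Delta}\le C'\alpha(z)^{1/2},\quad \alpha(z)\ge 1,
\]
which is \eqref{eq:imp2Vh-est}.

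There is no serious obstacle here: once Lemma \ref{thm:hVestimate} is in hand, both inequalities reduce to one application of the triangle inequality together with a known polynomial bound on $h$. The only minor point is that the restriction ``$\Delta>0$ small enough'' (any $\Delta<\tfrac{1}{2}$ suffices) is needed to guarantee $\alpha(z)^{\frac{1}{2}-\Delta}\ge 1$ on $\{\alpha(z)\ge 1\}$, so that the uniform bound can be absorbed into the growing power of $\alpha$.
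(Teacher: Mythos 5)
Your argument is correct and is essentially the paper's own proof, which likewise obtains the corollary directly from Lemma \ref{thm:hVestimate} combined with the bound $h(z)\le C\alpha(z)^{\frac{1}{2}}$ from Lemma 6 in \citet{dw12}. You merely spell out what the paper leaves implicit: absorbing the uniform bound on $|V(z)-h(z)|$ into $\alpha(z)^{\frac{1}{2}-\Delta}$ on $\{\alpha(z)\ge 1\}$ and then using the triangle inequality for \eqref{eq:imp2Vh-est}.
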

The Corollary follows directly from Lemma \ref{thm:hVestimate} using Lemma 6 in \citet{dw12}. 


To prove Theorem \ref{thm:htransformintegrated}, we want to show that given a $f\in D[0,1]$ bounded and uniformly continuous and $(x,y)\in\R_+\times\R_+$ fixed, we have 
\[
\E_{(x,y)}^{(V)}[f(X^{(n)})]\ra \E_{(0,0)}^{(h)}[f(W)],\quad n\ra \infty.
\]

Assume w.l.o.g. that values of $f$ are in $[0,1]$. Then, 
\[
\E_{(x,y)}^{(V)}[f(X^{(n)}),\gamma_n\geq n^{1-\epsilon}]\leq \frac{1}{V(x,y)}\E_{(x,y)}[V(Z(n)),\tau>n^{1-\epsilon},\gamma_n\geq n^{1-\epsilon}].
\]
Using Markov property and the harmonicity of $V$, we get 
\begin{align*}
&\E_{(x,y)}[V(Z(n)),\tau>n^{1-\epsilon},\gamma_n\geq n^{1-\epsilon} ]\\& = \E_{(x,y)}\left[\E_{(x,y)}[V(Z(n),\tau>n|\mathcal{F}_{n^{1-\epsilon}}],\tau>n^{1-\epsilon},\gamma_n\geq n^{1-\epsilon}\right]\\&= \E_{(x,y)}[V(Z(n^{1-\epsilon})),\gamma_n\geq n^{1-\epsilon},\tau\geq n^{1-\epsilon}].
\end{align*}
Recalling \eqref{eq:imp2Vh-est}, we use H\"older inequality to obtain 
\begin{align*}
    \E_{(x,y)}&[V(Z(n^{1-\epsilon})),\gamma_n\geq n^{1-\epsilon},\tau\geq n^{1-\epsilon}]\\&
    \le C\E_{(x,y)}[\alpha(Z(n^{1-\epsilon}))^{}]^{\frac{1}{2}}\left(\pr_{(x,y)}(\gamma_n\geq n^{1-\epsilon},\tau\geq n^{1-\epsilon})\right)^{\frac{1}{2}}.
\end{align*}
Note that $\alpha(Z(n^{1-\epsilon}))\le 2\max\{n^{(1-\epsilon)\frac{1}{3}}M(n^{1-\epsilon})^{\frac{1}{3}},M(n^{1-\epsilon})\}$. 

Together with Doob's inequality for martingales and Lemma 12 in \citet{dw12} this delivers 
\[
\E_{(x,y)}^{(V)}[f(X^{(n)}),\gamma_n\geq n^{1-\epsilon}] = o(1),\quad n\ra \infty.
\]


We look at
\begin{align*}
\E_{(x,y)}^{(V)}[f(X^{(n)}),\gamma_n\leq n^{1-\epsilon}]& = \E_{(x,y)}^{(V)}[f(X^{(n)}),\gamma_n\leq n^{1-\epsilon},m_{\gamma_n}>\theta_n\sqrt{n}]\\&+ \E_{(x,y)}^{(V)}[f(X^{(n)}),\gamma_n\leq n^{1-\epsilon},m_{\gamma_n}\leq\theta_n\sqrt{n}]\\
=A^{(n)}+B^{(n)}.
\end{align*}
Using Lemma \ref{thm:hVestimate} here together with Lemma 6 in \cite{dw12} for paths such that $\alpha(Z(n^{1-\epsilon}))\le 1$,  and  \eqref{eq:imp2Vh-est} for paths such that $\alpha(Z(n^{1-\epsilon}))> 1$, $A^{(n)}$ can be estimated as follows for $\theta_n\ra 0$ slow enough 
\begin{align*}
&\E_{(x,y)}^{(V)}[f(X^{(n)}),\gamma_n\leq n^{1-\epsilon},m_{\gamma_n}>\theta_n\sqrt{n}]
\\& \le \frac{1}{V(x,y)}\E_{(x,y)}[V(Z(n^{1-\epsilon})),\alpha(Z(n^{1-\epsilon}))\le 1,m_{n^{1-\epsilon}}>\theta_n\sqrt{n}]\\&+\frac{1}{V(x,y)}\E_{(x,y)}[V(Z(n^{1-\epsilon})),\alpha(Z(n^{1-\epsilon}))> 1,m_{n^{1-\epsilon}}>\theta_n\sqrt{n}] \\&\leq \frac{C_{x,y}}{V(x,y)}(\p(m_{n^{1-\epsilon}}>\theta_n\sqrt{n})+\E_{(x,y)}[m_{n^{1-\epsilon}}^{\frac{1}{2}},m_{n^{1-\epsilon}}>\theta_n\sqrt{n}]).
\end{align*}
Note that $\{m_{n^{1-\epsilon}}>\theta_n\sqrt{n}\}\subset \{M_{n^{1-\epsilon}}>\theta_n^3\sqrt{n}\}$. It holds
\[
\p(M_{n^{1-\epsilon}}>\theta_n^3\sqrt{n}) = o(1),
\]
due to Kolmogorov inequality. 
Moreover, if $\theta_n$ goes to zero slowly enough, we have 
\begin{equation*}
\max\{((n^{1-\epsilon})M_{n^{1-\epsilon}})^{\frac{1}{3}},M_{n^{1-\epsilon}}\}=M_{n^{1-\epsilon}}
\end{equation*}
on the set $\{M_{n^{1-\epsilon}}>\theta_n^3\sqrt{n}\}$ for all $n$ large enough. To show that $A^{(n)} = o(1)$ it thus suffices to show 
\begin{equation}\label{eq:eqhelp11}
\E[M_{n^{1-\epsilon}}^{\frac{1}{2}},M_{n^{1-\epsilon}}>\theta_n^3\sqrt{n}] = o(1).
\end{equation}
Simple combinations of the H\"older inequality and the Kolmogorov inequality can be used to show \eqref{eq:eqhelp11}. 
We now look at $B^{(n)}$. Define for $w\in D[0,1]$
\[
f(z,k,w) = f(z\textbf{1}_{\{t\leq \frac{k}{n}\}}+ w(t)\textbf{1}_{\{t> \frac{k}{n}\}}).
\]
It follows from uniform continuity of $f$, just as for the proof of the invariance principle for the meander, that 
\begin{align*}
&\E_{(x,y)}^{(V)}[f(X^{(n)}),\gamma_n\leq n^{1-\epsilon},m_{\gamma_n}\leq\theta_n\sqrt{n}]\\& 
= o(1) + \E_{(x,y)}^{(V)}[f(a_n(Z_{\gamma_n}),\gamma_n, X^{(n)}),\gamma_n\leq n^{1-\epsilon},m_{\gamma_n}\leq\theta_n\sqrt{n}].
\end{align*}
We can use Markov property to write 
\begin{align}
\label{eq:principal}
& \E_{(x,y)}^{(V)}[f(a_n(Z_{\gamma_n}),\gamma_n, X^{(n)}),\gamma_n\leq n^{1-\epsilon},m_{\gamma_n}\leq\theta_n\sqrt{n}]\\
\nonumber & = \sum_{k=0}^{n^{1-\epsilon}}\frac{1}{V(x,y)}\int_{m_{n,\epsilon}}\p_{(x,y)}((Z(k)\in dz,\tau>k,\gamma_n = k, L_k\leq \theta_n\sqrt{n})\\
\nonumber &\times\E_{z}[f(a_n(z),k,X^{(n)})V(Z(n-k)),\tau>n-k]
\end{align}
To continue, we first show that 
\begin{align}
\E_{z}[V(Z(n-k)),A_n^c,\tau>n-k] = o(h(z))\label{eq:first},\\
\E_{z}[h(W(n-k)),A_n^c,\tau^{bm}>n-k]= o(h(z))\label{eq:second}
\end{align}
uniformly for $k\leq n^{1-\epsilon},z\in K_{n,\epsilon}, \alpha(z)\leq \theta_n\sqrt{n}$.
For the first expression note
\begin{align*}
\E_z[V(Z(n-k))&,A_n^c,\tau>n-k]\\
&=\E_z[V(z+Z(n-k)),\alpha(z+Z(n-k))\leq n^{\frac{1}{2}+\eta},A_n^c,\tau>n-k]\\
&+\E_z[V(z+Z(n-k)),\alpha(z+Z(n-k))> n^{\frac{1}{2}+\eta},A_n^c,\tau>n-k]\\
&=A^{(n)}+B^{(n)}.
\end{align*}
We get with help of \eqref{eq:imp1Vh-est} and \eqref{eq:imp2Vh-est}
\begin{equation}\label{eq:Anhelp}
A^{(n)}\leq C(1+ n^{\frac{1}{4}+\frac{\eta}{2}})\p(A_n^c)\leq Cn^{\frac{1}{4}+\frac{\eta}{2} - r}.
\end{equation}
Here $r = \frac{\delta}{2}-2\gamma-\gamma\delta$ from Lemma \ref{thm:coupling}.
Furthermore, Lemma 6 in \cite{dw12} implies 
\begin{equation}
h(z)>cn^{\frac{1}{4}-\frac{\epsilon}{2}},\quad z\in K_{n,\epsilon}.
\end{equation}
It follows that $A^{(n)} = o(h(z))$, uniformly for $z\in K_{n,\epsilon}, \alpha(z)\le \theta_n\sqrt{n}$, whenever $\epsilon,\eta>0$ are chosen small enough.
By an analogous logic, we have 
\begin{align*}
B^{(n)}&\leq C\E_z[\alpha(Z(n-k))^{\frac{1}{2}},\alpha(Z(n-k))> n^{\frac{1}{2}+\eta}]\\&\leq C\E[M_n^{\frac{1}{2}}+\alpha(z)^{\frac{1}{2}}, M_n>n^{\frac{1}{2}+\frac{\eta}{2}}]\leq Ch(z)n^{-\frac{1}{4}+\frac{\epsilon}{2}}E[M_n^{\frac{1}{2}}+\alpha(z)^{\frac{1}{2}}, M_n>n^{\frac{1}{2}+\frac{\eta}{2}}].
\end{align*}
Note that because $\alpha(z)\le\theta_n\sqrt{n}$ with $\theta_n\ra 0$, we can focus on the term $M(n)$ in the above expression.

Integration by parts gives 
\begin{align}\label{eq:help1htransform}
\nonumber
&n^{-\frac{1}{4}+\frac{\epsilon}{2}}E[M_n^{\frac{1}{2}}, M_n>n^{\frac{1}{2}+\frac{\eta}{2}}]\\
&=n^{\frac{\epsilon}{2}+\frac{\eta}{4}}\p(M_n>n^{\frac{1}{2}+\frac{\eta}{2}}) + \frac{1}{4}n^{-\frac{1}{4}+\frac{\epsilon}{2}}\int_{n^{\frac{1}{2}+\frac{\eta}{2}}}^{\infty}x^{-\frac{1}{2}}\p(M_n>u)du
\end{align}
We recall Fuk-Nagaev-Borovkov inequalities (see Lemma 22 of \citet{rwcones} and \cite{borovkov} which shows that all Fuk-Nagaev inequalities remain
valid for partial maximas of sums of independent random variables):
\begin{equation}\label{eq:fuknagaev}
\p(|M_n|>u)\leq 2\left(\frac{en}{uv}\right)^{{\frac{u}{v}}}+n\p(|X(1)|>v).
\end{equation}
One can use this estimate with $y=xn^{-\frac{\eta}{4}}$ to show, that \eqref{eq:help1htransform} is asymptotically zero, uniformly for $z\in K_{n,\epsilon},\alpha(z)\le\theta_n\sqrt{n}$, if we choose $\epsilon>0$ small enough and then, given this $\epsilon$ we choose $\eta>0$ small enough.
In all, we have shown \eqref{eq:first}. \eqref{eq:second} follows along similar lines and uses estimates for Brownian motion instead of the Fuk-Nagaev-Borovkov inequalities.

We now use coupling. Note that 
\[
\{\tau_{z^-}^{bm}>n\}\cap A_n\subset \{\tau_z>n\}\cap A_n\subset \{\tau_{z^+}^{bm}>n\}\cap A_n.
\]
Consider the event 
\[
D_{n,k}: = \{x'-n^{\frac{3}{2}-\gamma}+ \int_0^{n-k}B(s)ds\ge 2 n^{\frac{3}{2}-\gamma}\}. 
\]
 Then on the event $D_{n,k}\cap A_n$, it holds

 \[
 T(n-k)\ge n^{\frac{3}{2}-\gamma}\quad \textrm{ ultimately in }n. 
 \]

Now it follows on $D_{n,k}\cap A_n$ that 
\begin{equation}
    \left|V(Z(n-k))-h(Z(n-k))\right|\le C \alpha(Z(n-k))^{1-\Delta},
\end{equation}
for all $\Delta>0$ sufficiently small. 

We use this and \eqref{eq:imp1Vh-est} to get for a $\Delta>0$ small,
\begin{align*}
\E_z&[f(a_n(z),k,X^{(n)})V(Z(n-k)),\tau>n-k, A_n]\\
&\geq \E_z[f(a_n(z),k,X^{(n)})h(Z(n-k)),\tau_{z^-}^{bm}>n-k,A_n\cap D_{n,k}]\\
&- C\E_z[\alpha(Z(n-k))^{\frac{1}{2}-\Delta},\tau_{z^-}^{bm}>n-k,A_n],
\end{align*}
uniformly for $k\leq n^{1-\epsilon},z\in K_{n,\epsilon}, \alpha(z)\leq \theta_n\sqrt{n}$.

Note that 
\begin{align*}
    \E_z&[\alpha(Z(n-k))^{\frac{1}{2}-\Delta},\tau_{z^-}^{bm}>n-k,A_n]\\&\le 
    C\E_z[|T(n-k)|^{\frac{1}{3}(\frac{1}{2}-\Delta)}+|S(n-k)|^{\frac{1}{2}-\Delta},\tau_{z^-}^{bm}>n-k,A_n]\\
    &\le C\E_{z^-}[\alpha(W(n-k))^{\frac{1}{2}-\Delta},\tau_{z^-}^{bm}>n-k,A_n] \\&+ C\left(n^{\frac{1}{3}(\frac{3}{2}-\gamma)(\frac{1}{2}-\Delta)}+ n^{(\frac{1}{2}-\gamma)(\frac{1}{2}-\Delta)}\right)\pr_{z^-}(\tau^{bm}>n-k)\\
    & =: A + B.
\end{align*}
It is clear that $B = o(u(z))$ as $n\ra\infty$, uniformly for all $k\leq n^{1-\epsilon},z\in K_{n,\epsilon}, \alpha(z)\leq \theta_n\sqrt{n}$. Here we have used, that 
\[
\p(\tau_{z^{\pm}}^{bm}>n) = \varkappa h(z)(1+o(1)),\text{ uniformly in }z\in K_{n,\epsilon},
\]
which is shown in the proof of Lemma 18 of \citet{dw12}.

Note that 
\begin{align*}
    A&\le C\E_{z}[\alpha(W(n-k))^{\frac{1}{2}-\Delta},\tau_{z}^{bm}>n-k,A_n] \\&= C(n-k)^{\frac{1}{4}(1-\Delta)}\E_{a_{n-k}(z)}[h(W(1))^{1-\Delta},\tau^{bm}_{a_{n-k}(z)}>1].
\end{align*}
From here, we use the scaling property of the Kolmogorov diffusion, to get 
for all $n$ large so that uniformly for all $k\leq n^{1-\epsilon},z\in K_{n,\epsilon}, \alpha(z)\leq \theta_n\sqrt{n}$ it holds $\alpha(a_{n-k}(z))\le 1$, that
\[
A\le C(n-k)^{\frac{1}{4}(1-\Delta)}h(a_{n-k}(z))\int_{\R_+\times\R} \psi(\sigma)h(\sigma)^{1-\Delta}d\sigma.
\]
We use the scaling property of the function $h$ to substitute $h(a_{n-k}(z)) = (n-k)^{\frac{1}{4}}h(z)$. This shows that $A = o(u(z))$ as $n\ra\infty$, uniformly for all $k\leq n^{1-\epsilon},z\in K_{n,\epsilon}, \alpha(z)\leq \theta_n\sqrt{n}$.

Next we prove, that 
\begin{align}\label{eq:eqhelphtransfconv}
\nonumber
\E_z[h(&Z(n-k)),\tau_{z^-}^{bm}>n-k,A_n]\\ &= \E_{z^-}[h(W(n-k)),\tau^{bm}>n-k,A_n] + o(h(z)),
\end{align}
uniformly for $k\leq n^{1-\epsilon},z\in K_{n,\epsilon}, \alpha(z)\leq \theta_n\sqrt{n}$.\\

We have for some $\theta_n\ra 0$ with help of Lemma 6 and Lemma 18 in \citet{dw12}
\begin{align}\label{eq:help2htransform}
\nonumber
\E_{z^-}&[h(W(n-k)),\alpha(W(n-k))\leq \theta_n \sqrt{n},\tau^{bm}>n-k,A_n] \\
&\leq C \theta_n^{\frac{1}{2}}n^{\frac{1}{4}}\p_{z^-}(\tau^{bm}>n-k) = o(h(z)).
\end{align}
Here, we have used 
\begin{equation}\label{eq:help6htransform}
h(z^{\pm}) = (1+o(1))h(z),
\end{equation}
uniformly for $z\in K_{n,\epsilon}$, which is shown in the proof of Lemma 18 in \citet{dw12}.

Moreover, for all large enough $n$ and if $\theta_n$ goes to zero slowly enough, it holds
\[
\alpha(Z(n-k))\leq \frac{1}{2}\theta_n\sqrt{n},\quad \textrm{ on }A_n,\textrm{ whenever }\alpha(W(n-k))\leq \theta_n \sqrt{n}.
\]
Here $Z$ is started at $z$. We estimate 
\begin{align*}
    &\E_{z^-}[h(Z(n-k)),\alpha(W(n-k))\leq \theta_n \sqrt{n},\tau^{bm}>n-k,A_n]\\&\le
    \E_{z}[h(Z(n-k)),\alpha(Z(n-k))\leq \theta_n \sqrt{n},\tau>n-k,A_n].
\end{align*}
We use this last inequality and an estimate analogous to \eqref{eq:help2htransform} to get
\begin{equation}\label{eq:help3hitransform}
\E_{z^-}[h(Z(n-k)),\alpha(W(n-k))\leq \theta_n \sqrt{n},\tau^{bm}>n-k,A_n] = o(h(z)).
\end{equation}
For $\alpha(W(n-k))> \theta_n \sqrt{n}$ it holds again for $n$ large enough and $\theta_n$ going slowly enough to zero, that
\[
\alpha(Z(n-k))> \frac{1}{2}\theta_n\sqrt{n}.
\]

We now use Taylor formula together with 
 \eqref{eq:gradienth} to show that on $A_n\cap \{\alpha(W(n-k))> \theta_n \sqrt{n}\}$ for $n$ large enough and $\theta_n$ it holds 
\begin{equation}
    \label{eq:helpetehere}
    |h(Z(n-k))-h(W(n-k))|= o(n^{\frac{1}{4}}).
\end{equation}
Here $Z$ is started at $z$ and $W$ at $z^-$. 

To see \eqref{eq:helpetehere} note that in general we have 
\begin{equation}
    \label{eq:helpetehere2}
    |h(z+w)-h(z)| \le C\left( |w_1|\alpha(z+t_0w)^{-\frac{5}{2}}+|w_2|\alpha(z+t_0w)^{-\frac{1}{2}}\right),
\end{equation}
for some suitable $t_0\in [0,1]$. Namely, the proof of Lemma 6 in \citet{dw12} shows
\begin{equation}\label{eq:gradienth}
\left|\frac{\partial^{i+j}h}{\partial x^i\partial y^j}(x,y)\right|\leq C \alpha(x,y)^{\frac{1}{2}-3i-j}.
\end{equation}
Replace here $z$ by $W(n-k)$ and $w$ by $Z(n-k)-W(n-k)$. It holds 

\[
\alpha(w)\le C n^{\frac{1}{2}-\frac{1}{3}\gamma},
\]
and
\[
\alpha(z+t_0w)\ge C\alpha(z)-\alpha(w)\ge C\theta_n \sqrt{n},
\]
for all $n$ large enough, whenever $\theta_n$ goes to zero slowly enough. Here we have used the inequality $\alpha(z_1+z_2)\le C\left(\alpha(z_1)+\alpha(z_2)\right)$ for $z_1,z_2\in \R^2$.

We combine this with \eqref{eq:helpetehere2} to arrive at 
\[
\left|h(Z(n-k))-h(W(n-k))\right|\le C n^{\frac{1}{4}-\gamma}\left(\theta_n^{-\frac{5}{2}} + \theta_n^{-\frac{1}{2}}\right) = o(n^{\frac{1}{4}}),\quad n\ra\infty,
\]
whenever $\theta_n$ goes to zero slowly enough.

It follows that 
\begin{align*}
    \E_{z}&[|h(Z(n-k))-h(W(n-k)),\alpha(W(n-k))> \theta_n \sqrt{n},\tau_{z^-}^{bm}>n-k,A_n]\\&\nonumber = o(n^{\frac{1}{4}})\pr_{z^-}(\tau^{bm}>n-k) = o(h(z)),
\end{align*}
uniformly for $k\leq n^{1-\epsilon},z\in K_{n,\epsilon}, \alpha(z)\leq \theta_n\sqrt{n}$.
This finishes the proof of \eqref{eq:eqhelphtransfconv}.

We return to the main proof. So far we have shown
\begin{align*}
\E_z&[f(a_n(z),k,X^{(n)})V(Z(n-k)),\tau>n-k, A_n]\\
&\geq \E_z[f(a_n(z),k,X^{(n)})h(W(n-k)),\tau_{z^-}^{bm}>n-k,A_n\cap D_{n,k}] + o(h(z)),
\end{align*}
uniformly for $k\leq n^{1-\epsilon},z\in K_{n,\epsilon}, \alpha(z)\leq \theta_n\sqrt{n}$.

It holds 
\begin{align*}
    &\E_{z^-}[h(W(n-k)),\tau_{z^-}^{bm}>n-k,D_{n,k}^c] \\
    &\le (n-k)^{\frac{1}{4}}\pr_{z^-}(\tau^{bm}>n-k)\E_{a_{n-k}(z^{-})}[h(a_{n-k}(W(n-k))), \tilde D_{n,k}|\tau^{bm}>1],
\end{align*}
with $\tilde D_{n,k} = \{\frac{x'}{n^{\frac{3}{2}}}-n^{-\gamma}+ \int_0^{1}B(s)ds\le 4 n^{-\gamma}\}$. Due to convergence towards the meander, as well as the properties of the function $\psi$, the expectation converges to zero as $n\ra\infty$. Using Lemma 15 in \citet{dw12} together with \eqref{eq:help6htransform} it follows for all $n$ large enough that
\begin{align}
\label{eq:kickmefort}
\E_z&[f(a_n(z),k,X^{(n)})V(Z(n-k)),\tau>n-k, A_n]\\
\nonumber
&\geq \E_z[f(a_n(z),k,X^{(n)})h(W(n-k)),\tau_{z^-}^{bm}>n-k,A_n] + o(h(z)),
\end{align}
uniformly for $k\leq n^{1-\epsilon},z\in K_{n,\epsilon}, \alpha(z)\leq \theta_n\sqrt{n}$.


Analogous to above, we can prove
\begin{align}\label{eq:help5htransform}
\nonumber
\E_z&[f(a_n(z),k,X^{(n)})V(Z(n-k)),\tau>n-k,A_n]\\
&\leq \E_z[f(a_n(z),k,X^{(n)})h(W(n-k)),\tau_{z^+}^{bm}>n-k,A_n] + o(h(z)),
\end{align}
uniformly for $k\leq n^{1-\epsilon},z\in K_{n,\epsilon}, |z_2|\leq \theta_n\sqrt{n}$. 

In the proof of Theorem \ref{thm:meander} we have shown that 
\begin{equation}\label{eq:help30meander}
f(a_n(z),k,X^{(n)}) = f(a_n(z),k,W^{(n)}) +o(1),
\end{equation}
uniformly for $k\leq n^{1-\epsilon},z\in K_{n,\epsilon}, |z_2|\leq \theta_n\sqrt{n}$.
Here $W^{(n)} = a_n(W)$ is the rescaled Kolmogorov diffusion. \eqref{eq:help30meander} implies that 
\begin{align}
\label{eq:help31}
\E_z&[|f(a_n(z),k,X^{(n)})-f(a_n(z),k,W^{(n)})|h(W(n-k)),\tau_{z^\pm}^{bm}>n-k,A_n]\\&\nonumber = 
o\left(\E_{z^\pm}[h(W(n-k)),\tau^{bm}>n-k]\right) = o(h(z)),
\end{align}
uniformly for $k\leq n^{1-\epsilon},z\in K_{n,\epsilon}, |z_2|\leq \theta_n\sqrt{n}$. Here we have used the harmonicity of $h$ as well as \eqref{eq:help6htransform}.
From this we infer, after performing a scaling, that 
\begin{align*}
&h(z^-)\E^{(h)}_{a_n(z^-)}[f(a_n(z),k,W^{(n)})] + o(h(z))\\&\le
    \E_z[f(a_n(z),k,X^{(n)})V(Z(n-k)),\tau>n-k,A_n]\\&\le h(z^+)\E^{(h)}_{a_n(z^+)}[f(a_n(z),k,W^{(n)})] + o(h(z)),
\end{align*}
uniformly for $k\leq n^{1-\epsilon},z\in K_{n,\epsilon}, |z_2|\leq \theta_n\sqrt{n}$.

Given the convergence of the $h$-transforms for the Kolmogorov diffusion, we arrive at 
\begin{equation}
    \label{eq:karabush}
    \E_z[f(a_n(z),k,X^{(n)})V(Z(n-k)),\tau>n-k,A_n] = h(z)\E^{(h)}_0[f(W)] + o(h(z)),
\end{equation}
uniformly for $k\leq n^{1-\epsilon},z\in K_{n,\epsilon}, |z_2|\leq \theta_n\sqrt{n}$.
We return to \eqref{eq:principal}, which implies with \eqref{eq:karabush} that
\begin{align}
    \E_{(x,y)}^{(V)}&[f(a_n(Z(\gamma_n)),\gamma_n, X^{(n)}),\gamma_n\leq n^{1-\epsilon},m_{\gamma_n}\leq\theta_n\sqrt{n}] \\
    & = \frac{\E^{(h)}_0[f(W)] + o(1)}{V(x,y)}\E_{(x,y)}[h(Z(\gamma_n)),\gamma_n\le n^{1-\epsilon},\tau>\gamma_n,m_{\gamma_n}\le \theta_n\sqrt{n}]. 
\end{align}
A combination of Lemmas 20 and 21 from \citet{dw12} shows 
$$\lim_{n\ra\infty}\E_{(x,y)}[h(Z(\gamma_n)),\gamma_n\le n^{1-\epsilon},\tau>\gamma_n,m_{\gamma_n}\le \theta_n\sqrt{n}] = V(x,y).$$ This finishes the proof.

\section[Applications to the case with drift]{Applications to the case with drift}

The case $\E[X]\neq 0$ can be considered under two different settings. We develop the exponential case and just remark shortly on the case without exponential moments. \\\indent Consider a random variable which has finite exponential moments in a right neighborhood of zero: 
\[
M(t)=\E[e^{tX_1}]<\infty, \quad 0\leq t\leq t_0, \quad t_0>0.
\]
Denote by $\mu$ the law of $X_1$ and define the laws
\[
\mu_t(dz) = \frac{e^{tX_1}}{M(t)}d\mu(z).
\]
Let $X^{(t)}$ be a random variable with law $\mu_t$. Then we have for the laws $\mu_t^n$ of the respective vector $(X^{(t)}_1,\dots,X^{(t)}_n)$  under the change of exponential measure
\[
\mu_t^n(dz)= \frac{e^{tS^{(t)}(n)}}{M(t)^n}d\mu^n(dz),
\]
where $S^{(t)}(n)$ is the respective random walk.
Note that $f(t)=(\log{M(t)})' = \frac{\E[X_1e^{tX_1}]}{\E[e^{tX_1}]}$ for $t\in (0,t_0)$ is a bijective function in $t$ and increasing. Take some $c$ from its image, i.e. assume there exists $t_c$ with $f(t_c)=c$. Then it is easy to show that 
$\E[X^{(t_c)}] = c$. Note also that $\sigma^2=Var(X^{(t_c)}) = (\log{M(t)})''$ and that we can get an invariance principle for 
\[
X^{(n,t_c)}(s) = \left(\frac{T([ns])-\frac{ns(ns+1)}{2}c}{{\sigma^3 n}^{\frac{3}{2}}},\frac{S([ns])-nsc}{\sigma n^{\frac{1}{2}}}\right), \quad s\in[0,1]
\]
under the measure $\mu_t$. The condition  now is for the integrated random walk to stay strictly above a quadratic function. Define thus 
\[
\tau_c = \inf\{k\geq 0: T(k)\leq \frac{k(k+1)c}{2}\}.
\]

Then we have the following invariance principle for the meander.\footnote{Note that what we get is a whole family of invariance principles, indexed by $c$ in the image of $f(\cdot)$.}
\begin{theorem}
Let $X^{(n,t_c)}(s)$ for $s\in[0,1]$ as above under the measure $\mu_t$. Then these processes, started in $(x,y)\in\R_+\times\R_+$ and conditioned on $\tau_c>n$, converge to the meander of the Kolmogorov Diffusion, conditioned on not leaving $\R_+\times\R$ before time 1. 
\end{theorem}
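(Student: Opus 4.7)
The plan is to reduce the drifted problem to the centered setting of Theorem~\ref{thm:meander} by combining Cram\'er's exponential tilt with a drift subtraction that preserves the pair structure of \eqref{eq:dynamicint}. Under the tilted measure $\mu_{t_c}$, the i.i.d.\ increments $X_i^{(t_c)}$ have mean $c$ and variance $\sigma^2=(\log M)''(t_c)>0$, and since $M$ is real-analytic on $(0,t_0)$, they automatically have finite moments of every order; in particular, the $(2+\delta)$-moment hypothesis needed for Theorem~\ref{thm:meander} is satisfied for any $\delta>0$.

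Introduce the centered innovations $Y_i=X_i^{(t_c)}-c$ and the detrended two-dimensional process
$$
\hat Z(n)=\bigl(T(n)-\tfrac{n(n+1)}{2}c,\ S(n)-nc\bigr),\qquad n\ge 0.
$$
A direct computation shows that $\hat Z(0)=(x,y)$ and that $\hat Z$ satisfies exactly the recursion~\eqref{eq:dynamicint} driven by the innovations $Y_{n+1}$. Hence, under $\mu_{t_c}$, the process $\hat Z$ is the pair (integrated walk, walk) associated with a centered i.i.d.\ sequence, started at $(x,y)\in\R_+\times\R_+$. Moreover, the quadratic barrier condition becomes precisely the positivity condition on the first coordinate of $\hat Z$:
$$
\{\tau_c>n\}=\{\hat Z_1(k)>0\ \text{for all}\ k\le n\}.
$$

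Theorem~\ref{thm:meander}, applied to $\hat Z$ after rescaling the $Y_i$ by $\sigma$ to unit variance, then yields weak convergence in $D[0,1]$ of
$$
\left(\tfrac{\hat Z_1([ns])}{\sigma n^{3/2}},\ \tfrac{\hat Z_2([ns])}{\sigma\sqrt n}\right)_{s\in[0,1]}
$$
conditional on $\{\hat\tau>n\}$ to the Kolmogorov meander of length one started at $(0,0)$. The stated rescaled process $X^{(n,t_c)}$ differs from the display above only by deterministic corrections arising from the fixed initial state $(x,y)$: namely, $T([ns])-\tfrac{[ns]([ns]+1)}{2}c=\hat Z_1([ns])+x+[ns]\,y$ and $S([ns])-[ns]c=\hat Z_2([ns])+y$, together with the harmless replacement of $[ns]$ by $ns$ inside the quadratic barrier term. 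These corrections are uniformly of smaller order than $n^{3/2}$ and $\sqrt n$ respectively, so a Slutsky-type argument concludes the proof.

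The main (and essentially only) obstacle is the bookkeeping step showing that $\hat Z$ satisfies \eqref{eq:dynamicint} with innovations $Y_{n+1}$ and initial condition $(x,y)$, and that the conditioning events line up exactly; once this is done the rest is an immediate consequence of Theorem~\ref{thm:meander} and Slutsky. No new probabilistic input beyond the centered invariance principle is required.
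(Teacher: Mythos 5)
Your reduction---working directly under the tilted law $\mu_{t_c}$ (whose increments have mean $c$ and all moments), centering via $\hat Z(n)=\bigl(T(n)-\tfrac{n(n+1)}{2}c,\,S(n)-nc\bigr)$, verifying that $\hat Z$ again satisfies \eqref{eq:dynamicint} with the centered innovations and that $\{\tau_c>n\}$ is exactly the positivity event for $\hat Z_1$, and then applying Theorem \ref{thm:meander} after normalizing by $\sigma$---is precisely the "straightforward" argument the paper intends for the drift case, so the proposal is correct and matches the paper's route. One bookkeeping remark: since the start point $(x,y)$ is already contained in $T$ and $S$, your own definition gives $T([ns])-\tfrac{[ns]([ns]+1)}{2}c=\hat Z_1([ns])$ exactly, so the extra correction $x+[ns]y$ you list is spurious (harmlessly so, as it would vanish under the scaling in any case).
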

Similarly, we get invariance principles for the case of bridges. 
\begin{theorem}
For $x,u\in \R_+, y,v\in \R$ we have ${(n,t_c)}(\cdot)|X^{(n,t_c)}=a_n(u,v),\tau>n)_{a_n(x,y)}$
converges weakly to the Kolmogorov excursion of length 1.
\end{theorem}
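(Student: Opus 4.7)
The plan is to reduce the claim to Theorem \ref{thm:bridgeinv} via two classical observations: an exponential tilt converts the drifted problem with parabolic barrier into a centered problem with the usual linear barrier at zero, and on a bridge (conditional on the endpoint being fixed) the linear-exponent tilt contributes only a deterministic constant and therefore leaves the conditional path law unchanged.

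First I would introduce the centered increments $\tilde X_i := X_i^{(t_c)} - c$ under $\mu_{t_c}$, so that $\E[\tilde X_i]=0$ and their variance equals $\sigma^2$. Setting $\tilde S(k)=\tilde X_1+\cdots+\tilde X_k$ and $\tilde T(k)=\tilde S(1)+\cdots+\tilde S(k)$, a direct computation gives
\[
\tilde S(k)=S^{(t_c)}(k)-kc,\qquad \tilde T(k)=T^{(t_c)}(k)-\tfrac{k(k+1)}{2}c,
\]
so the event $\{\tau_c>n\}$ coincides exactly with the event that the pair Markov chain $(\tilde T,\tilde S)$ has not left $\R_+\times\R$ by time $n$, and $X^{(n,t_c)}$ is precisely the scaling of this chain that appears in Theorem \ref{thm:bridgeinv}, up to the harmless factor $\sigma$ which can be absorbed by working with $\tilde X/\sigma$.

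Next I would use the Radon--Nikodym identity
\[
d\mu_{t_c}^n = \frac{e^{t_c S(n)}}{M(t_c)^n}\, d\mu^n,
\]
whose density depends on the path only through the endpoint $S(n)$. Consequently, on the event
\[
\bigl\{S^{(t_c)}(n)=nc+\sigma n^{1/2}v,\ T^{(t_c)}(n)=\tfrac{n(n+1)}{2}c+\sigma^3 n^{3/2}u,\ \tau_c>n\bigr\}
\]
this density is a deterministic constant. Hence the conditional distribution of the full path under $\mu_{t_c}^n$ agrees with the conditional distribution, under the original measure, of the centered chain $(\tilde T,\tilde S)$ conditioned on $\{\tau>n\}$ with the prescribed scaled endpoint. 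Applying Theorem \ref{thm:bridgeinv} to $\tilde X/\sigma$ then yields the desired convergence to the positive Kolmogorov excursion of length one.

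The only delicate point I expect is the lattice hypothesis under which Theorem \ref{thm:bridgeinv} is stated: the tilt preserves the support of $X_1$, but the centering shifts it by $c$, so aperiodicity has to be read on the shifted lattice $-c+\Z$. This is a routine adjustment that enters only through the local limit theorem from \citet{dw12} invoked in the proof of Lemma \ref{thm:density}; once it is absorbed, the remainder of the argument is automatic from the two reductions described above.
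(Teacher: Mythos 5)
Your reduction is correct and is essentially the argument the paper intends (the paper gives no details beyond ``similarly''): after centering the tilted increments, $\{\tau_c>n\}$ coincides with the positivity event for the pair chain built from $\tilde X_i$, the process $X^{(n,t_c)}$ is exactly its scaling, and $\tilde X_i/\sigma$ is a mean-zero, unit-variance walk with $(2+\delta)$-moments on the shifted lattice $(-c+\Z)/\sigma$, so Theorem \ref{thm:bridgeinv} (in the $a+b\Z$ form the paper alludes to) applies directly; your remark about reading aperiodicity on the shifted lattice is the right caveat. One correction to your second step: the Radon--Nikodym argument is superfluous for the statement as given, which (like the meander theorem of this section) concerns the law under $\mu_{t_c}$, and in any case Theorem \ref{thm:bridgeinv} must be invoked under $\mu_{t_c}$, not under $\mu$ --- under the original law the chain driven by $X_i-c$ has drift $\E_\mu[X_1]-c\neq 0$, so it is not a ``centered chain'' and the theorem does not apply to it; what your tilt-back observation does buy, correctly, is that the same bridge limit holds under the untilted law, since the exponential density depends on the path only through $S(n)$ and is constant on the conditioning event. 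With that reading, your proof is sound.
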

Finally, we remark the case with $\E[X_1]\neq 0$, where no exponential assumption is assumed.
\begin{remark}
Note that the above invariance principles also hold for random walks which instead fulfill a second moment condition of $\E[|X_1|^{2+\delta}]<\infty$ for some $\delta>0$ and have drift $\E[X_1]=c\neq 0$. The steps are the same as for the exponential case above and we don't need to make a change of measure. Instead we look at 
\[
X^{(n,c)}(s) = \left(\frac{T([ns])-\frac{ns(ns+1)}{2}c}{{Var[X_1]^{\frac{3}{2}} n}^{\frac{3}{2}}},\frac{S([ns])-nsc}{Var[X_1]^{\frac{1}{2}} n^{\frac{1}{2}}}\right), \quad s\in[0,1]
\]
in place of $X^{(n,t_c)}(s)$.
\end{remark}
\vspace{5mm}


\section{On Kolmogorov diffusion conditioned to stay positive}
\label{sec:kolmo}
\subsection{Introduction and results from the literature.} 
The Kolmogorov diffusion $W = (U,V)$ is a two dimensional real valued stochastic process with an integrated Brownian motion as a first coordinate and its integrand Brownian motion as a second coordinate. This process has a long history and the reader is referred to \citet{groeneboom} for more historical details and applications.
The transition density of this process for $x,y,u,v \in \mathbb{R}$ and $t > 0$ is given by
\begin{align}
\label{eq:pt}
\nonumber
p_t(x,y;u,v) &= \frac{\sqrt{3}}{\pi t^2} \exp \left( -\frac{6(u-x-ty)^2}{t^3} + \frac{6(v-y)(u-x-ty)}{t^2} - \frac{2(v-y)^2}{t} \right) \\
                     &= \frac{\sqrt{3}}{\pi t^2} \exp \left( -\frac{6(u-x)^2}{t^3} + \frac{6(u-x)(v+y)}{t^2} - \frac{2(v^2 + vy + y^2)}{t} \right).
\end{align}
The asymptotics of the exit time of this process from $\R_+\times\R$ is well-known (see \citet{groeneboom}):
\[
\p_{(x,y)}(\tau^{bm}>t)\sim \frac{h(x,y)}{t^{\frac{1}{4}}},\quad t\ra \infty.
\]
Here, $x>0,y\in \R$.
$h$ is the function $h:\R_+\times\R\longrightarrow \R$ which is harmonic for the Kolmogorov diffusion, in the sense that $\mathcal{A}h=0$ on $\R_+\times\R$, where $\mathcal{A} = y\frac{\partial}{\partial x}+\frac{1}{2}\frac{\partial^2}{\partial y^2}$ is the generator of $W$. In other words, $h(W(t))\textbf{1}_{\{\tau^{bm}>t\}}$ is a non-negative martingale. $h$ has the following integral representation. 
\begin{align}\label{eq:defh}
&h(x,y) := \int_{s=0}^\infty \int_{w=0}^\infty w^{3/2} q_s(x,y;0,-w) \textrm{d}s \textrm{d}w = \\
\nonumber
&\frac{2 \sqrt{3}}{\pi} \int_{s=0}^\infty \int_{w=0}^\infty w^{3/2} \exp (-6x^2 s^3 - 6xys^2 - 2(y^2 + w^2)s) \sinh(6xws^2 + 2yws) \textrm{d}s \textrm{d}w.
\end{align}
In the following, let $\bar{p}_t(x,y;u,v) = $ denote the density of the Kolmogorov diffusion started at $(x,y)$ at time $t$ and killed if the first coordinate becomes non-positive. 
We also introduce from \citet{groeneboom} the function $g: \mathbb{R} \rightarrow (0, \infty )$ defined by
\begin{equation}\label{eq:defg}
g(y) := h(1,y) = \int_{s = 0}^\infty \int_{w = 0}^\infty w^{3/2} q_s(1,y;0,-w) \textrm{d}s \textrm{d}w.
\end{equation}
In \citet{McK} the behavior of a Kolmogorov diffusion started in $(0,-|z|)$ that is stopped, when it hits zero is studied. \citet{L1} has done a more general analysis, where the process can start in any $(x,y)$ and is stopped when its first coordinate hits a given level.
\begin{lemma}[Joint distribution of $(\tau_a, V_{\tau_a})$-\citet{L1}]\label{thm:jointdistlemma}
The joint distribution of $(\tau_a, V_{\tau_a})$, where $\tau_a$ is the stopping time $\tau_a := \inf_{t>0} \{U_t = a\}$, is determined by the density $f_t$, which is recursively defined by
\begin{align*}
\textrm{1.} ~& f_s(0,-|z|;0,w) :=  \mathbb{P}_{(0, -|z|)}\big(\tau_0 \in \textrm{d}s, V_{\tau_0} \in \textrm{d}w\big)/\textrm{d}s\textrm{d}w = \\
\nonumber
=& \frac{3 |w|}{\pi \sqrt{2 \pi} s^2} e^{-(2/s)(z^2  -|zw| + w^2)} \int_0^{4 |zw| /s} \theta^{-1/2} e^{-(3 \theta / 2)} \textrm{d} \theta  \\
\nonumber
\textrm{and} \\
\textrm{2.} ~& f_t(x,y;a,z) := \mathbb{P}_{(x,y)}\big(\tau_a \in \textrm{d}t, V_{\tau_a} \in \textrm{d}z\big) /\textrm{d}t\textrm{d}z =  \\
\nonumber
=& ~ |z| \Bigl[ p_t(x,y,a,z) - 
 \int_0^t \int_0^\infty f_s(0,-|z|;0, w) p_{t-s}(x,y,a, -\varsigma w) \textrm{d}w \textrm{d}s \Bigr] 1_A(z), 
\end{align*}
where $A = [0, \infty)$ if $x<a$, $A = (-\infty, 0]$ if $x>a$ and $\varsigma$ is the sign of $(a-x)$. 
\end{lemma}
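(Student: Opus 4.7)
The plan is to reduce Part~2 to Part~1 via time reversal combined with a single first-return decomposition on the reversed process. A direct computation from the Gaussian formula \eqref{eq:pt} yields the symmetry $p_t(x,y;u,v) = p_t(u,-v;x,-y)$, expressing the fact that reversing time and flipping the sign of velocity preserves the transition density of the Kolmogorov diffusion.

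Part~1 is classical (McKean): for the process started at $(0,-|z|)$, the initial velocity is nonpositive, so the path is driven into $\{u<0\}$ and its first return to $u=0$ must have nonnegative velocity $w$. The explicit density $f_s(0,-|z|;0,w)$ is obtained by a spectral/eigenfunction calculation for the killed generator on the halfline (or, equivalently, by extracting the appropriate piece of the Green function); I would take this as input rather than rederive it.

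For Part~2 the central step is the boundary-flux identity
\begin{equation*}
f_t(x,y;a,z) = |z|\, \bar q_t(a,-z;x,-y),
\end{equation*}
where $\bar q_t$ is the transition density of the Kolmogorov diffusion started at $(a,-z)$ and killed on its first return to $\{u=a\}$. Morally this is the textbook fact that the first-hitting density on an absorbing boundary equals (normal speed) $\times$ (interior density approaching the boundary), rewritten in the time-reversed picture: a forward path that first hits $a$ at time $t$ with velocity $z$ corresponds under time reversal to a path leaving $(a,-z)$ that never returns to $\{u=a\}$ on $(0,t]$ and ends at $(x,-y)$. The sign of $-z$ is forced by $\varsigma$: for $x<a$ one has $z\ge 0$, so the reversed process launches into $\{u<a\}$; for $x>a$ the picture flips.

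With this identity in hand, applying the first-return Chapman--Kolmogorov decomposition to the reversed process gives
\begin{equation*}
p_t(a,-z;x,-y) = \bar q_t(a,-z;x,-y) + \int_0^t\!\!\int_{A'} f_s(a,-z;a,w)\, p_{t-s}(a,w;x,-y)\, dw\, ds,
\end{equation*}
with $A'$ the sign-restricted set of admissible return velocities. Translation invariance yields $f_s(a,-z;a,w) = f_s(0,-z;0,w)$, and the Kolmogorov diffusion's $(x,y)\mapsto(-x,-y)$ reflection symmetry converts the $\varsigma=-1$ branch into the $f_s(0,-|z|;0,w)$ form with $w\ge 0$, producing the common integration range $[0,\infty)$ in the statement. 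One more application of the time-reversal identity, $p_{t-s}(a,\pm w;x,-y) = p_{t-s}(x,y;a,\mp w)$, produces the $-\varsigma w$ argument, and a final use of $p_t(a,-z;x,-y)=p_t(x,y;a,z)$ rearranges the display into the claimed formula. The main technical hurdle is the boundary-flux identity itself: to make it rigorous one either performs a careful interior limit, $\bar p_t(x,y;a\pm\epsilon,z) \sim |z|^{-1}f_t(x,y;a,z)$ as $\epsilon\downarrow 0$, or invokes the general boundary theory for hypoelliptic operators; the remainder of the argument is symmetry bookkeeping.
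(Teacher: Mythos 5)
You should first note that the paper does not actually prove this lemma: it is imported verbatim from \citet{L1} (with Part 1 going back to \citet{McK}) and is used only as input for Lemma \ref{thm:formulabarp} and the decomposition of $\bar p_t$ in Lemma \ref{thm:decomposition}. So there is no in-paper argument to compare against; the relevant comparison is with Lachal's original derivation, and your outline essentially reconstructs it: the time-reversal duality $p_t(x,y;u,v)=p_t(u,-v;x,-y)$, the strong-Markov decomposition of the reversed process started at the boundary point $(a,-z)$ at its first return to $\{u=a\}$, translation invariance in the first coordinate to reduce the return density to McKean's $f_s(0,-|z|;0,w)$, and the reflection $(u,v)\mapsto(-u,-v)$ to handle the $\varsigma=-1$ branch. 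That bookkeeping is correct, and the signs ($-\varsigma w$, the indicator $1_A(z)$, the restriction $w\ge 0$) come out right.

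The one substantive issue is the boundary-flux identity $f_t(x,y;a,z)=|z|\,\bar q_t(a,-z;x,-y)$, which carries the entire argument and which you assert rather than establish. It is true, but it is not the textbook absorbing-boundary fact for nondegenerate diffusions (there the killed density vanishes at the boundary and the exit law is a normal derivative); for the degenerate Kolmogorov operator the line $\{u=a\}$ is non-characteristic only where $v\neq 0$, the killed density has a nonzero limit on the outgoing side and vanishes on the incoming side, and $|z|$ is the ballistic flux through the line. Making this precise — via the interior limit $\bar p^a_t(x,y;u,z)\to f_t(x,y;a,z)/|z|$ as $u\to a$ on the outgoing side with adequate uniformity, or via the explicit computations in \citet{L1} — is exactly where the real work of the cited result lies; invoking ``general boundary theory for hypoelliptic operators'' leaves that step unproved. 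Given that the paper treats the lemma purely as a citation, your sketch is a sound reconstruction of the source's route, but as a standalone proof it is incomplete at precisely that point.
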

Using this, Lachal (see \citet[p.128]{L4}) described the transition density of $(U, V)$ killed when $U$ hits $a \in \mathbb{R}$. Note that 
$$\bar p_t(x,y;u,v)=\mathbb{P}_{(x,y)}\big((U_t,V_t) \in \textrm{d}u\textrm{d}v, \tau_0 > t\big).
$$
\begin{lemma}[Formula for $\overline{p}$]\label{thm:formulabarp}
For $t, x, u > 0$ and $y,v \in \mathbb{R}$ and where $A$ is again $(-\infty,0]$ if $x>a$ and $[0, \infty)$ if $x<a$ we have 
\begin{align*}
\overline{p}^a_t(x,y;u,v) &:= \mathbb{P}_{(x,y)}\big((U_t,V_t) \in \textrm{d}u\textrm{d}v, \tau_a > t\big) / \textrm{d}u\textrm{d}v =   \\
\nonumber
                          &= p_t(x,y;u,v) - \int_0^t \int_A  f_s(x,y;a,z) p_{t-s}(a,z;u,v) \textrm{d}z \textrm{d}s.
\end{align*}
\end{lemma}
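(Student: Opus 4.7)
The plan is to prove the identity by a first-passage decomposition based on the strong Markov property of the Kolmogorov diffusion at the hitting time $\tau_a$. First I would start from the unconditional transition density and write
\[
p_t(x,y;u,v)\,du\,dv \;=\; \mathbb{P}_{(x,y)}\bigl((U_t,V_t)\in du\,dv,\ \tau_a>t\bigr) \;+\; \mathbb{P}_{(x,y)}\bigl((U_t,V_t)\in du\,dv,\ \tau_a\le t\bigr).
\]
The first summand on the right is exactly $\bar p^a_t(x,y;u,v)\,du\,dv$, so the whole task reduces to identifying the second summand with the integral appearing in the statement.

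For the second summand I would invoke the strong Markov property of $W$ at $\tau_a$. Since $W=(U,V)$ is a two-dimensional diffusion with continuous paths and $\tau_a=\inf\{t>0:U_t=a\}$ is the hitting time of a closed set, it is a stopping time and the strong Markov property applies. Conditioning on $\{\tau_a=s,V_{\tau_a}=z\}$, the process $(W_{s+r})_{r\ge 0}$ is, independently of $\mathcal{F}_{\tau_a}$, a Kolmogorov diffusion started at $(a,z)$; hence
\[
\mathbb{P}_{(x,y)}\bigl((U_t,V_t)\in du\,dv,\ \tau_a\le t\bigr)\big/du\,dv \;=\; \int_0^t\!\int_A f_s(x,y;a,z)\,p_{t-s}(a,z;u,v)\,dz\,ds,
\]
where $f_s(x,y;a,z)$ is the joint density of $(\tau_a,V_{\tau_a})$ provided by Lemma~\ref{thm:jointdistlemma}, and the range $A$ of $V_{\tau_a}$ is dictated by the sign of $a-x$: the first coordinate $U$ is continuous and its derivative at $\tau_a$ is $V_{\tau_a}$, so to cross the level $a$ from above (the case $x>a$) one must have $V_{\tau_a}\le 0$, while to cross from below (the case $x<a$) one must have $V_{\tau_a}\ge 0$. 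Substituting this back yields precisely the stated formula.

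The main step that needs justification is the application of the strong Markov property and the identification of the range $A$, but both are standard for continuous strong Markov processes and for the Kolmogorov diffusion respectively. A minor technical point is that one should ensure the integral $\int_0^t\!\int_A f_s p_{t-s}\,dz\,ds$ is finite so that Fubini applies when passing from probabilities to densities; this follows from the fact that both $f_s$ and $p_{t-s}$ are honest densities of sub-probability, respectively probability, kernels. No serious obstacle is anticipated beyond bookkeeping of the sign of $V_{\tau_a}$ relative to the side from which $a$ is approached.
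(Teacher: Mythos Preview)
Your proposal is correct and is the standard first-passage decomposition for this kind of killed transition density. Note, however, that the paper does not give its own proof of this lemma: it is quoted from \citet[p.128]{L4}, and Lachal's argument there is precisely the strong Markov decomposition at $\tau_a$ that you outline, so your approach coincides with the original source.
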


\paragraph{\textbf{Representations in terms of confluent hypergeometric functions.}}
$h$ also has a representation in terms of confluent hypergeometric functions. Namely, in \citet{groeneboom} it is shown how the function $g$ can be expressed in terms of two such functions, specifically the Tricomi function $U$ and the function $V$. Next, we introduce these functions.

Tricomi's function $U$ is defined as the unique solution of the confluent hypergeometric equation with the convergence behavior 
\begin{equation*}\label{eq:hypergeometricU}
U(a,c,z) \sim z^{-a} \sum_{s=0}^{\infty} (-1)^s \frac{(a)_s (1+a-c)_s}{s!z^s} \textrm{~for~} z \rightarrow \infty ~ \textrm{in} ~|\textrm{ph}(z)| < \frac{3}{2}\pi,
\end{equation*}
where ph denotes the phase of a complex number and $( )_s$ is the so called Pochhammer's symbol defined as
\begin{equation*}
(a)_0 := 1 \textrm{~and~} (a)_s := a(a+1)(a+2) \dots (a+s-1) \textrm{~for~} s \in \mathbb{N}. 
\end{equation*}
The confluent hypergeometric function $V$ is defined as
\begin{equation*}\label{eq:hypergeometricV}
V(a,c,z) := e^z U(c-a,c,-z) ~ \textrm{for} ~ |\textrm{ph}(z)| < \frac{1}{2} \pi, \textrm{Re}(a) > 0.
\end{equation*}

See \citet{Olv}[p.256] for more on these definitions and further theory concerning confluent hypergeometric equations and functions. Now we can state the representation of $g$ (see \citet{groeneboom} for a proof).
\begin{lemma}
The function $g$ has the representation
\begin{align*}
g(y) &= ~(\frac{2}{9})^{1/6}y~U\left(\frac{1}{6},\frac{4}{3},\frac{2}{9}y^3\right)            &&\textrm{~for~} y>0, \\
g(y) &= -(\frac{2}{9})^{1/6}\frac{1}{6}y~V\left(\frac{1}{6},\frac{4}{3},\frac{2}{9}y^3\right) &&\textrm{~for~} y<0, \\
g(0) &= ~(\frac{2}{9})^{-1/6}\frac{\Gamma(1/3)}{\Gamma(1/6)}. &&
\end{align*}
\end{lemma}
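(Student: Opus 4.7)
The plan is to exploit the scaling property $h(x,y) = x^{1/6} g(x^{-1/3}y)$ together with the harmonicity $\mathcal{A}h=0$ on $\R_+\times\R$ to reduce the PDE for $h$ to an ODE for $g$, and then to recognise that ODE as a disguised form of Kummer's confluent hypergeometric equation.

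First, write $u := x^{-1/3}y$ and substitute the scaling ansatz into $y\partial_x h + \tfrac{1}{2}\partial_{yy} h = 0$. A direct computation of the partial derivatives, using $\partial_x u = -\tfrac{1}{3}x^{-4/3}y$ and $\partial_y u = x^{-1/3}$, collapses the PDE to
\begin{equation*}
g''(u) - \frac{2u^2}{3}\, g'(u) + \frac{u}{3}\, g(u) = 0, \qquad u\in\R\setminus\{0\}.
\end{equation*}
Next, introduce the change of variable $z := \tfrac{2}{9} u^3$ and write $g(u) = u f(z)$. On each of the two half-lines $u>0$ and $u<0$ (on which $u\mapsto z$ is a diffeomorphism) the substitution turns the above ODE into the Kummer equation
\begin{equation*}
z f''(z) + \left(\tfrac{4}{3} - z\right) f'(z) - \tfrac{1}{6} f(z) = 0,
\end{equation*}
with parameters $a=\tfrac{1}{6}$, $c=\tfrac{4}{3}$, whose solution space is spanned by $M(1/6,4/3,z)$ and $U(1/6,4/3,z)$, or equivalently by $U$ and $V$ via the identity $V(a,c,z) = e^z U(c-a,c,-z)$.

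The next step is to single out the correct linear combination on each half-line. On $u>0$, the bound $g(y) \le C y^{1/2}$ as $y\to +\infty$, which follows from Lemma 6 in \citet{dw12} applied to $h(1,y)$, rules out the Kummer $M$ solution (which grows like $e^z z^{-7/6}$), leaving only a multiple of $U(1/6,4/3,z)$. On $u<0$ the argument $z$ becomes negative, and the solution naturally adapted to that half-line is expressed through $V(1/6,4/3,\cdot)$ via the aforementioned identity. The two multiplicative constants are then fixed by matching to the integral representation \eqref{eq:defh}: taking $y\to 0^+$ and using the small-argument asymptotic $U(a,c,z)\sim \Gamma(c-1)z^{1-c}/\Gamma(a)$ (valid because $\operatorname{Re} c>1$) yields
\begin{equation*}
y\, U\!\left(\tfrac{1}{6},\tfrac{4}{3},\tfrac{2y^3}{9}\right) \longrightarrow \left(\tfrac{2}{9}\right)^{-1/3}\frac{\Gamma(1/3)}{\Gamma(1/6)},
\end{equation*}
which, multiplied by $(2/9)^{1/6}$, simultaneously pins down the constant on the positive half-line and reproduces the claimed value $g(0) = (2/9)^{-1/6}\Gamma(1/3)/\Gamma(1/6)$. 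Continuity of $g$ at the origin, inherited from the absolute convergence and smoothness of the integral in \eqref{eq:defh}, together with the Wronskian between $U$ and $V$ at $z=0$, then forces the factor $-\tfrac{1}{6}(2/9)^{1/6}$ on the negative half-line.

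The main obstacle is the handling of the singular point $u=0$ (equivalently $z=0$): the ODE has a regular singularity there, and matching the two branches requires regularity of $g$ at $0$, which does not follow from the ODE analysis alone but must be read off from the integral representation \eqref{eq:defh}. Once that regularity is in hand, the combination of the asymptotic expansions of $U$ at zero and at infinity makes the constants uniquely determined.
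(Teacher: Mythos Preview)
The paper does not give its own proof of this lemma; it simply cites \citet{groeneboom}. Your ODE reduction is a perfectly valid and standard route: the derivation of $g''-\tfrac{2u^2}{3}g'+\tfrac{u}{3}g=0$ from the harmonicity of $h$ and the further transformation to Kummer's equation with parameters $a=\tfrac16$, $c=\tfrac43$ are correct, and the use of the growth bound to eliminate the $M$-solution on $y>0$ is sound.

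There is, however, a genuine gap in the determination of the multiplicative constants. Your argument at $y\to0^+$ only checks that the \emph{claimed} constant $(2/9)^{1/6}$ is consistent with the \emph{claimed} value of $g(0)$; it does not derive either one independently. To close this you must extract at least one piece of hard information from the integral representation \eqref{eq:defh}. The cleanest option is to prove directly from \eqref{eq:defh} (e.g.\ by Laplace's method in the $s$-variable, or by the substitution $s\mapsto y^{-2}s$, $w\mapsto y\,w$) that $g(y)\sim y^{1/2}$ as $y\to+\infty$; matching against $yU(\tfrac16,\tfrac43,\tfrac{2}{9}y^3)\sim (2/9)^{-1/6}y^{1/2}$ then forces the constant $(2/9)^{1/6}$, and the value of $g(0)$ follows as a consequence rather than as an input. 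Likewise, on $y<0$ you should say explicitly why the $M$-branch is excluded: since $M(\tfrac16,\tfrac43,z)\sim C(-z)^{-1/6}$ as $z\to-\infty$, the combination $y\,M$ would grow like $(-y)^{1/2}$, contradicting the boundedness (indeed, the decay) of $g$ that one reads off from \eqref{eq:defh} by dominated convergence. With those two points made precise, continuity of $g$ at $0$ then fixes the remaining constant $-\tfrac16(2/9)^{1/6}$ on the negative half-line.
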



\paragraph{\textbf{The auxiliary function $\bar h$}.} The statement of the results in this section requires the introduction of the auxiliary function $\bar{h}:\R_+\times\R_+\times\R\ra\R$
\[
\bar{h}(t,x,y) = \frac{4\sqrt{3}}{\sqrt{2\pi}}\int_0^t\int_0^{\infty}w^{\frac{3}{2}}s^{-\frac{1}{2}}p_s(0,w;0,0)\left(p_{t-s}(x,y;0,-w)-p_{t-s}(x,y;0,w)\right)dwds.
\]
With these definitions, we can state the main results of this section.

\begin{proposition}\label{thm:garbitlike}
There exists $R>0, g:\R_+\times\R\ra\R_+$ with the property $$\int_{\R_+}\int_{\R}g(u,v)h(u,v)dvdu<\infty,$$ so that
\[
\sup_{(x,y)\in\R_+\times\R_+,\alpha (x,y)\leq \frac{1}{2}}\bar{p}_1(x,y;u,v)\leq g(u,v)h(x,y),\quad\textrm{ for }(u,v)\in\R_+\times\R; u\geq R \textrm{ or }|v|\geq R.
\]
\end{proposition}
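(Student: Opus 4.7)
The plan is to apply Lachal's representation of the killed transition density (Lemma \ref{thm:formulabarp}),
\[
\bar p_1(x,y;u,v) = p_1(x,y;u,v) - \int_0^1 \int_{-\infty}^0 f_s(x,y;0,z)\, p_{1-s}(0,z;u,v)\, dz\, ds,
\]
and to bound this expression uniformly in $\alpha(x,y) \le 1/2$ for tail $(u,v)$ with $u\ge R$ or $|v|\ge R$. Using the explicit Gaussian form of $p_t$, both $p_1(x,y;u,v)$ and $p_{1-s}(0,z;u,v)$ admit bounds of the form $C\exp(-c(u^2+v^2))$, with a polynomial prefactor in $|z|$ coming from the explicit formula for $f_s$ in Lemma \ref{thm:jointdistlemma}, uniformly in the relevant parameters. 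So the required Gaussian tail in $(u,v)$ is already present in each term individually; the non-trivial content of the proposition is the extraction of the factor $h(x,y)$, which must come from the cancellation between $p_1$ and the reflection correction.

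To extract $h(x,y)$, I would iterate the recursive formula for $f_s$ (part 2 of Lemma \ref{thm:jointdistlemma}) and reorganize the resulting multiple integral by exploiting the antisymmetrization $p_{1-s}(0,-w;u,v) - p_{1-s}(0,w;u,v)$ that is built into the auxiliary function $\bar h(t,u,-v)$ defined just before the proposition. This rearrangement factors $\bar p_1(x,y;u,v)$, up to lower-order terms, as an integral in $(s,w)$ of $w^{3/2}\, q_s(x,y;0,-w)$ against a kernel depending only on $(s,w,u,v)$. By the integral representation \eqref{eq:defh} of $h$, the integral in $(s,w)$ of $w^{3/2}\, q_s(x,y;0,-w)$ is exactly $h(x,y)$, and the supremum over $\alpha(x,y)\le 1/2$ of the remaining factor then defines $g(u,v)$. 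This rearrangement parallels the derivation of the asymptotic $\bar p_t \sim h(x,y)\bar h(t,u,-v)$, but is carried out as a pointwise inequality rather than a leading-order asymptotic.

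The main obstacle is verifying that the error terms from the iterative substitution of $f_s$ are dominated by $h(x,y)$ times a function of $(u,v)$ with Gaussian decay, uniformly in $\alpha(x,y)\le 1/2$ (rather than only in the limit $(x,y)\searrow(0,0)$). The integrability condition $\int g\cdot h\, du\, dv < \infty$ on the tail then follows because $g(u,v)$ decays Gaussianly in both $u$ and $v$, while $h(u,v)$ grows at most as $\alpha(u,v)^{1/2}$ by Lemma 6 in \cite{dw12} combined with the scaling property of $h$, so the product is integrable on $\{u\ge R\}\cup\{|v|\ge R\}$. The restriction to the tail $\alpha(u,v)\ge R$ is used at two points: to absorb the polynomial prefactors in $|z|$ arising from the recursion of $f_s$ into the exponential decay in $(u,v)$, and to avoid the integrable singularities of $h$ near the origin when checking integrability of the product $g \cdot h$.
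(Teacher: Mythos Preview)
Your proposal is correct and follows essentially the same route as the paper: the paper iterates Lachal's formula once to obtain the decomposition in Lemma~\ref{thm:decomposition}, then bounds the three resulting pieces separately, with the four-fold integral being handled exactly as you describe---by estimating $zf_{s-r}(0,-z;0,w)$ to isolate a factor $w^{3/2}q_r(x,y;0,-w)$ whose $(r,w)$-integral is $h(x,y)$ by \eqref{eq:defh}. What you call the ``main obstacle'' is indeed where the paper spends most of its effort (Steps 3-a through 3-d), splitting the $(s,z)$-integration into several regimes to show the remaining kernel defines an integrable $g(u,v)$.
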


\begin{proposition}\label{thm:propbarp}
For $x,y \searrow 0$ we have 
\begin{equation*}
\overline{p}_t(x,y;u,v) \sim h(x,y) \overline{h}(t,u,-v),
\end{equation*}
uniformly in $(u,v)$ with $\alpha(u,v)\le R$ ($R>0$ arbitrary). 
\end{proposition}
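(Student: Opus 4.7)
\emph{Plan of proof.} The natural starting point is Lachal's formula from Lemma \ref{thm:formulabarp},
\[
\bar p_t(x,y;u,v) = p_t(x,y;u,v) - \int_0^t \int_{-\infty}^0 f_s(x,y;0,z)\,p_{t-s}(0,z;u,v)\,dz\,ds,
\]
together with the recursive expression for $f_s(x,y;0,z)$ provided by Lemma \ref{thm:jointdistlemma}. After substituting and changing variables $z = -\tilde z$ with $\tilde z \ge 0$, $\bar p_t(x,y;u,v)$ decomposes into a direct integral involving $p_s(x,y;0,-\tilde z)$ and a correction double integral involving the first-passage density $f_\sigma(0,-\tilde z;0,w)$. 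The task is then to extract the factorisation $h(x,y)\bar h(t,u,-v)$ as the leading order as $(x,y)\searrow(0,0)$.

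Two structural identities underpin the identification. First, substituting $\sigma = 1/s$ in \eqref{eq:defh} and using the explicit identity
\[
p_s(x,y;0,-w) - p_s(x,y;0,w) = \tfrac{2\sqrt{3}}{\pi s^2}\, e^{-6x^2/s^3 - 6xy/s^2 - 2y^2/s - 2w^2/s}\,\sinh(6xw/s^2 + 2yw/s)
\]
yields the alternative representation
\[
h(x,y) = \int_0^\infty\int_0^\infty w^{3/2}\bigl[p_s(x,y;0,-w) - p_s(x,y;0,w)\bigr]\,dw\,ds.
\]
Second, a Taylor expansion of the incomplete integral in Lemma \ref{thm:jointdistlemma}, namely $\int_0^{4|\tilde z w|/\sigma}\theta^{-1/2}e^{-3\theta/2}d\theta = 4\sqrt{\tilde z w/\sigma}+O((\tilde z w/\sigma)^{3/2})$, combined with $p_\sigma(0,w;0,0) = (\sqrt 3/\pi\sigma^2)e^{-2w^2/\sigma}$, gives the asymptotic
\[
f_\sigma(0,-\tilde z;0,w) \;\sim\; \sqrt{\tilde z}\;\tfrac{4\sqrt{3}}{\sqrt{2\pi}}\,w^{3/2}\sigma^{-1/2}\,p_\sigma(0,w;0,0),\qquad \tilde z\searrow 0,
\]
whose right-hand side is precisely the kernel defining $\bar h$. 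Applying the time-reversal symmetry $p_t(x,y;u,v) = p_t(u,-v;x,-y)$ recasts the $(u,v)$-dependence of the correction term as $p_{t-s}(u,-v;0,\pm w)$, matching the structure of $\bar h(t,u,-v)$.

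Bringing these two identifications together via Fubini, the leading contribution of Lachal's expression combined with the recursion factorises as $h(x,y)\bar h(t,u,-v)$. Uniformity in $(u,v)$ with $\alpha(u,v)\le R$ follows from dominated convergence, using Gaussian tail estimates on the transition density $p_\sigma$, the bound of Proposition \ref{thm:garbitlike} controlling $\bar p$, and the explicit formulas for $f_\sigma$ as integrable majorants.

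\emph{The main obstacle} is that the $\tilde z$-integration in the correction term is not a priori concentrated near $\tilde z = 0$, so the small-$\tilde z$ asymptotic of $f_\sigma(0,-\tilde z;0,w)$ cannot simply be substituted into the integral. One must instead argue that the pointwise limit lifts to a convergence of integrals via uniform integrable majorants on $(x,y)$ near $(0,0)$; the nested structure in $s,\sigma,w,\tilde z$ then requires careful justification of the Fubini exchanges and the interchange of limits and integration, which together account for the bulk of the technical work.
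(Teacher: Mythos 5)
Your starting point (Lachal's formula from Lemma \ref{thm:formulabarp} plus the recursion of Lemma \ref{thm:jointdistlemma}), your matching of the small-argument expansion of $f$ with the kernel of $\overline{h}$, and your use of Proposition \ref{thm:garbitlike} and Gaussian bounds for domination are all ingredients of the actual argument. But the proposal has a structural gap: in your decomposition the three pieces — $p_t(x,y;u,v)$, the ``direct'' double integral with $\tilde z\,p_s(x,y;0,-\tilde z)$, and the ``correction'' quadruple integral — are each of order one as $(x,y)\searrow(0,0)$ (in fact the two integral pieces individually diverge like $\log(1/x)$, since for $(x,y)$ near $0$ the $s$-integration is only cut off at $s\asymp x^{2/3}$), whereas the quantity you must resolve, $\overline{p}_t(x,y;u,v)\sim h(x,y)\overline{h}(t,u,-v)$, is $o(1)$. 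So the asymptotics lives entirely in cancellations \emph{between} your terms, and no scheme of the form ``pointwise limit plus integrable majorants, term by term'' can see it: the ratio of your integrands to $h(x,y)$ has no pointwise limit before the cancellations are implemented. What is missing is precisely the step the paper isolates as Lemma \ref{thm:decomposition}: subtract $p_t(x,y;-u,-v)$ and use the two path-crossing identities (every path from $(x,y)$ to $(-u,-v)$ crosses $\{0\}\times(-\infty,0]$, and every path from $(0,-z)$ to $(x,-y)$ crosses $\{0\}\times[0,\infty)$) to recombine everything into the antisymmetrized kernels $q_s(x,y;0,\cdot)$ and $q_{t-s}(u,-v;0,\cdot)$. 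Only after that are all terms individually $O(h(x,y))$ or $o(h(x,y))$ and amenable to dominated convergence.

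Two further points. First, even granted the decomposition, the factor $h(x,y)$ does not come from your identity $h(x,y)=\int\int w^{3/2}q_s(x,y;0,-w)\,dw\,ds$ applied pointwise; it emerges from a boundary-layer analysis: one divides by $h(x,y)$, rescales $r\mapsto x^{2/3}r$, $w\mapsto x^{1/3}w$ in the slot carrying the $(x,y)$-dependence, and takes limits along directions $x^{-1/3}y\to c$, using $h(x,y)\sim x^{1/6}g(c)$ (Lemma \ref{thm:convbehaviorh}) so that $g(c)$ cancels; the case $x^{-1/3}y\to\infty$ needs a separate argument (the paper invokes Groeneboom's asymptotics for $\overline{p}_t(0,y;u,v)$). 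None of this rescaling or case analysis appears in your outline, yet it is where both the factorization and the uniformity in $\alpha(u,v)\le R$ are actually produced. Second, your asymptotic for $f_\sigma(0,-\tilde z;0,w)$ as $\tilde z\searrow 0$ is stated in the wrong variable for your own pairing: in your correction term $(x,y)$ is coupled to $w$ and $(u,v)$ to $\tilde z$, so the variable driven to zero by $(x,y)\searrow 0$ after rescaling is $w$, and the $\overline{h}$-kernel must be assembled from the $(\tilde z,s)$ variables paired with $q_{t-s}(u,-v;0,\cdot)$, not from $(w,\sigma)$; as written, your intended factorization assigns the variables inconsistently. So the diagnosis in your final paragraph (Fubini and interchange of limits as the main difficulty) misses the real obstacle, which is the exact-cancellation and rescaling structure.
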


Note that the proof of Proposition \ref{thm:propbarp} given here establishes formally also the result stated in Lemma 16 of \citet{dw12}.

Before continuing with the proofs of the Propositions \ref{thm:garbitlike} and \ref{thm:propbarp}, we give an application. We use Proposition \ref{thm:propbarp} to calculate the density of the meander $W_{(0,0)}(\cdot|\tau^{bm}>1)$. 

\begin{proposition}\label{thm:convergencekolmogorovmeander}
The density of the process $ W_{(0,0)}(\cdot\textrm{ }|\tau^{bm}>1)$ at time $t\in (0,1)$ is proportional to $\p_{(u,v)}(\tau^{bm}>1-t)\bar{h}(t,u,-v)$. 
\end{proposition}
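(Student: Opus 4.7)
The plan is to combine the already-established formulas and estimates to simply read off the density of the limiting meander at time $t$. Recall from the proof of Proposition \ref{thm:Kolmomeander} that for any starting point $(x,y)\in\R_+\times\R_+$ and any $t\in(0,1)$, the density of $W_{(x,y)}(t)$ under the conditional law $\pr_{(x,y)}(\,\cdot\,|\tau^{bm}>1)$ can be written by the Markov property as
\[
m_t(x,y;u,v) \;=\; \frac{\overline p_t(x,y;u,v)\,\pr_{(u,v)}(\tau^{bm}>1-t)}{\pr_{(x,y)}(\tau^{bm}>1)}.
\]
So the task reduces to taking $(x,y)\searrow (0,0)$ in this expression and identifying the limit as the density of $W_{(0,0)}(t\,|\,\tau^{bm}>1)$.

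First I would apply Proposition \ref{thm:propbarp} to the numerator factor $\overline p_t(x,y;u,v)\sim h(x,y)\,\overline h(t,u,-v)$, uniformly on compacts $\{\alpha(u,v)\le R\}$, and \eqref{eq:helpk1} to the denominator factor $\pr_{(x,y)}(\tau^{bm}>1)\sim \varkappa h(x,y)$. The factors $h(x,y)$ then cancel, leaving
\[
m_t(x,y;u,v)\;\longrightarrow\;\frac{1}{\varkappa}\,\overline h(t,u,-v)\,\pr_{(u,v)}(\tau^{bm}>1-t)\;=:\;m_t(u,v),
\]
uniformly for $\alpha(u,v)\le R$, for every $R>0$.

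Next I would argue that this pointwise limit is in fact the density of the limit measure of $W_{z_n}(t\,|\,\tau^{bm}>1)$. By Proposition \ref{thm:garbitlike}, combined with the monotonicity $\pr_{(u,v)}(\tau^{bm}>1-t)\le 1$ and the already-used bound \eqref{eq:needed}, the family $\{m_t(x,y;\cdot,\cdot):\alpha(x,y)\le\tfrac12\}$ is dominated by an integrable function (up to a uniformly bounded multiple of $\psi(u,v)h(u,v)$ on the tails and by local uniform convergence on compacts). Hence the $L^1$ convergence argument already used in the proof of Proposition \ref{thm:Kolmomeander} gives $\int\!\!\int|m_t(x,y;u,v)-m_t(u,v)|\,du\,dv\to 0$, so $m_t(u,v)$ is a probability density and agrees with the finite-dimensional marginal of $W_{(0,0)}(\,\cdot\,|\,\tau^{bm}>1)$. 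This proves the statement, with proportionality constant $1/\varkappa$.

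I do not expect a hard obstacle here: the only delicate point is to make sure the uniform asymptotic in Proposition \ref{thm:propbarp} is applied on a compact $\{\alpha(u,v)\le R\}$ while the tails are controlled by the uniform estimate from Proposition \ref{thm:garbitlike}; both facts are explicitly proven in Section \ref{sec:kolmo}, so the proof is a clean assembly of existing ingredients rather than new analysis.
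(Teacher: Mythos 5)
Your proposal is correct and follows essentially the same route as the paper: write the conditional density $m_t(x,y;u,v)=\overline p_t(x,y;u,v)\,\pr_{(u,v)}(\tau^{bm}>1-t)/\pr_{(x,y)}(\tau^{bm}>1)$, apply Proposition \ref{thm:propbarp} to the numerator and the asymptotics $\pr_{(x,y)}(\tau^{bm}>1)\sim\varkappa h(x,y)$ (equivalently \eqref{eq:helpk1}, obtained in the paper via scaling and Lemma 15 of \citet{dw12}) to the denominator, and cancel $h(x,y)$. Your additional remarks on tail domination via Proposition \ref{thm:garbitlike} and the $L^1$ identification simply make explicit what the construction of the meander in Proposition \ref{thm:Kolmomeander} already provides, so there is no substantive difference.
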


\begin{proof}
We look at the convergence of densities, i.e. of 
\[
\frac{\bar{p}_t(x,y;u,v)\p_{(u,v)}(\tau^{bm}>1-t)}{\p_{(x,y)}(\tau^{bm}>1)}
\]
as $x,y\searrow 0$. We use Proposition \ref{thm:propbarp} $\overline{p}_t(x,y,u,v)\sim h(x,y)\overline{h}(t,u,-v)$. The scaling property of Brownian motion, implies $$\p_{(x,y)}(\tau^{bm}>1) = \p_{(\lambda^{\frac{3}{2}}x,\lambda^{\frac{1}{2}}y)}(\tau^{bm}>\lambda).$$ This implies in turn, that 
\begin{align*}
&\p_{(x,y)}(\tau^{bm}>1) = \p_{(1,yx^{-\frac{1}{3}})}(\tau^{bm}>x^{-\frac{2}{3}})\quad \textrm{ for }x^{\frac{1}{3}}\geq y,\\
&\p_{(x,y)}(\tau^{bm}>1) = \p_{(xy^{-3},1)}(\tau^{bm}>y^{-2})\quad \textrm{ for }x^{\frac{1}{3}}<y.
\end{align*}
Using uniform continuity of $h(1,t), h(t,1)$ as functions in $t$, Lemma 15 from \citet{dw12} with $\theta_t = \max\{x^{\frac{1}{6}},y^{\frac{1}{2}}\}, t = x^{-\frac{2}{3}}$ and the scaling properties of $h$ we get in all
\[
\p_{(x,y)}(\tau^{bm}>1) \sim \varkappa h(x,y), \textrm{ as }x,y\searrow 0
\]
The limit density is $C\p_{(u,v)}(\tau^{bm}>1)\bar{h}(t,u,-v)$, where $C$ is a suitable normalization constant.
\end{proof}

We calculate here also the density of the meander of length $t\neq 1$. For this we note that if the meander is started at some point $(x,y)\in \R_+\times\R_+$ then the density at some time $s\leq t$ is 
\[
p_t^+(s;x,y,;u,v)=\frac{\bar{p}_s(x,y;u,v)\p_{(u,v)}(\tau^{bm}>t-s)}{\p_{(x,y)}(\tau^{bm}>t)}.
\]
Here we can write
\[
\p_{(x,y)}(\tau^{bm}>t) = \p_{(xt^{-\frac{3}{2}},yt^{-\frac{1}{2}})}(\tau^{bm}>1)\sim h(xt^{-\frac{3}{2}},yt^{-\frac{1}{2}}) = t^{-\frac{1}{4}}h(x,y), \textrm{ as }x,y\searrow 0
\]
Using then again $\overline{p}_s(x,y;u,v)\sim h(x,y)\overline{h}(s,u,-v)$ from above we get
\[
p_t^+(s;x,y,;u,v)\sim t^{\frac{1}{4}}\bar{h}(s,u,-v)\p_{(u,v)}(\tau^{bm}>t-s)\textrm{ as }x,y\searrow 0.
\]
In all, we have proven the following.
\begin{corollary}\label{thm:densitymeander-t}
The density of meander of length $t\in(0,\infty)$ with start point in $0$ is 
\[
p_t^+(s;u,v) = Ct^{\frac{1}{4}}\bar{h}(s,u,-v)\p_{(u,v)}(\tau^{bm}>t-s),\quad s\leq t, (u,v)\in\R_+\times\R.
\]
\end{corollary}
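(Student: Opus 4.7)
The plan is to identify the density of the length-$t$ meander started at $(0,0)$ as the pointwise limit, as $(x,y)\searrow(0,0)$, of the explicit density of the meander started at $(x,y)\in\R_+\times\R_+$, and then check that this limit density is in fact the density of the weak limit produced by Proposition \ref{thm:Kolmomeander}. First I would write down the density of the length-$t$ meander started at $(x,y)$: by the Markov property and the definition of conditioning on $\{\tau^{bm}>t\}$,
\begin{equation*}
p_t^+(s;x,y;u,v)=\frac{\bar p_s(x,y;u,v)\,\pr_{(u,v)}(\tau^{bm}>t-s)}{\pr_{(x,y)}(\tau^{bm}>t)},\qquad s\in(0,t],\ (u,v)\in\R_+\times\R.
\end{equation*}

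Next I would treat the denominator. By the scaling identity \eqref{eq:scalingK} one has $\pr_{(x,y)}(\tau^{bm}>t)=\pr_{a_t(x,y)}(\tau^{bm}>1)$, and the argument given in the proof of Proposition \ref{thm:convergencekolmogorovmeander} (combining \eqref{eq:helpk1} with Lemma 15 of \citet{dw12} and the scaling property $h(a_t(x,y))=t^{-1/4}h(x,y)$) yields
\begin{equation*}
\pr_{(x,y)}(\tau^{bm}>t)\sim \varkappa t^{-1/4}h(x,y),\qquad x,y\searrow 0.
\end{equation*}
For the numerator I would invoke Proposition \ref{thm:propbarp} directly, giving $\bar p_s(x,y;u,v)\sim h(x,y)\bar h(s,u,-v)$ uniformly on compacts in $(u,v)$. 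Dividing, the factor $h(x,y)$ cancels and one obtains, for each fixed $s\in(0,t)$ and each fixed $(u,v)\in\R_+\times\R$,
\begin{equation*}
p_t^+(s;x,y;u,v)\longrightarrow \frac{1}{\varkappa}\,t^{1/4}\,\bar h(s,u,-v)\,\pr_{(u,v)}(\tau^{bm}>t-s),\qquad x,y\searrow 0,
\end{equation*}
which is of the form claimed with $C=1/\varkappa$.

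The only nontrivial point left is to upgrade pointwise density convergence to the identification of the limit density of $W_{(0,0)}(\cdot\mid \tau^{bm}>t)$ at time $s$. Here I would use Proposition \ref{thm:Kolmomeander} together with a time rescaling (the meander of length $t$ is obtained from the meander of length $1$ by applying $b_t$), which already guarantees that $W_{(x,y)}(\cdot\mid\tau^{bm}>t)$ converges weakly as $(x,y)\searrow 0$ under the mild growth condition \eqref{eq:mild}. To convert pointwise convergence of densities into convergence of the time-$s$ marginal densities to the claimed expression (and thereby identify the limit law), I would apply Scheffé's lemma: Proposition \ref{thm:garbitlike} supplies an integrable dominating function $g(u,v)h(x,y)$ for $\bar p_1$, which after time-scaling gives an analogous bound for $\bar p_s$, and combined with the estimate \eqref{eq:helpk4} one gets equi-integrability of $p_t^+(s;x,y;\cdot,\cdot)$. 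Hence the limit is itself a probability density, and must coincide with the density of the time-$s$ marginal of the length-$t$ Kolmogorov meander from $(0,0)$.

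The main obstacle is this last step: ensuring the pointwise limit is a bona fide density, not merely a sub-probability limit. This is handled exactly as in the construction of the Kolmogorov meander in Proposition \ref{thm:Kolmomeander}, where Proposition \ref{thm:garbitlike} together with \eqref{eq:helpk4} give the required uniform integrability. Once that is in place, everything else is an explicit cancellation already carried out in the proof of Proposition \ref{thm:convergencekolmogorovmeander}; the corollary for general $t$ differs only by one factor of $t^{1/4}$ coming from the denominator's scaling, and the case $t=1$ recovers Proposition \ref{thm:convergencekolmogorovmeander}.
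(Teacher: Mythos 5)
Your proposal is correct and follows essentially the same route as the paper: the ratio formula for the meander density, the scaling asymptotics $\pr_{(x,y)}(\tau^{bm}>t)\sim \varkappa t^{-1/4}h(x,y)$ for the denominator, and Proposition \ref{thm:propbarp} for the numerator, with the factor $h(x,y)$ cancelling to produce the stated density up to the constant. Your extra paragraph on upgrading pointwise density convergence via Proposition \ref{thm:garbitlike} and \eqref{eq:helpk4} is exactly the uniform-integrability argument the paper uses in the construction of the meander in Proposition \ref{thm:Kolmomeander}, so it adds welcome explicitness rather than a new method.
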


\subsubsection{Auxiliary Lemmas.}

As a next step, we analyze the convergence behavior of the function $h$.
\begin{lemma}[Asymptotic behavior of $h$]\label{thm:convbehaviorh}
For $x \downarrow 0$ and $y \rightarrow 0$ we have
\begin{enumerate}
\item
$h(x,y) \sim x^{1/6}g(c)$ as $x \downarrow 0$ and $y \rightarrow 0$ such that $x^{-1/3}y \rightarrow c$ for $c \in \mathbb{R}$.
\item
$h(x,y) \sim y^{1/2}$ as $x \downarrow 0$ and $y \rightarrow 0$ such that $x^{-1/3}y \rightarrow +\infty$.
\item
$h(x,y) \sim \frac{3}{4}x(-y)^{-5/2}e^{\frac{2}{9}\frac{y^3}{x}}$ as $x \downarrow 0$ and $y \rightarrow 0$ such that $x^{-1/3}y \rightarrow -\infty$.
\end{enumerate}  
\end{lemma}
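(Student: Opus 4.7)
The strategy rests on a single observation: the scaling property of $h$ reduces the two-dimensional asymptotics on $(x,y)$ to a one-dimensional analysis of $g$. Indeed, from the scaling property $h(x,y)=x^{1/6}h(1,x^{-1/3}y)=x^{1/6}g(x^{-1/3}y)$, it is enough to understand $g(\eta)$ as $\eta\to c$, $\eta\to+\infty$, and $\eta\to-\infty$, and then substitute $\eta=x^{-1/3}y$.

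For part (1), the substitution gives $h(x,y)=x^{1/6}g(x^{-1/3}y)$, and since $g$ is continuous on $\R$ (both the integral representation \eqref{eq:defg} and the hypergeometric representation make this clear) and $x^{-1/3}y\to c$, one obtains $g(x^{-1/3}y)\to g(c)$, yielding $h(x,y)\sim x^{1/6}g(c)$.

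For parts (2) and (3) the work is done by the hypergeometric representations. In part (2) I would use $g(\eta)=(2/9)^{1/6}\eta\,U(1/6,4/3,(2/9)\eta^3)$ for $\eta>0$ together with the Tricomi asymptotic $U(a,c,z)\sim z^{-a}$ as $z\to\infty$, which gives
\[
g(\eta)\sim (2/9)^{1/6}\eta\cdot\bigl((2/9)\eta^3\bigr)^{-1/6}=\eta^{1/2},\qquad \eta\to+\infty.
\]
Substituting $\eta=x^{-1/3}y$ then produces $h(x,y)\sim x^{1/6}(x^{-1/3}y)^{1/2}=y^{1/2}$. For part (3), I would use $g(\eta)=-(2/9)^{1/6}\tfrac{1}{6}\eta\,V(1/6,4/3,(2/9)\eta^3)$ for $\eta<0$, rewrite $V$ via $V(a,c,z)=e^{z}U(c-a,c,-z)$ to get
\[
V(1/6,4/3,(2/9)\eta^3)=e^{(2/9)\eta^3}\,U\bigl(7/6,4/3,-(2/9)\eta^3\bigr),
\]
and apply the Tricomi asymptotic with argument $-(2/9)\eta^3\to+\infty$ as $\eta\to-\infty$ to get $U(7/6,4/3,-(2/9)\eta^3)\sim (2/9)^{-7/6}(-\eta)^{-7/2}$. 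Combining constants and using $\eta=-(-\eta)$ yields $g(\eta)\sim\tfrac{3}{4}(-\eta)^{-5/2}e^{(2/9)\eta^3}$. Finally, $h(x,y)=x^{1/6}g(x^{-1/3}y)$ with $\eta=x^{-1/3}y$ gives $x^{1/6}\cdot\tfrac{3}{4}x^{5/6}(-y)^{-5/2}e^{(2/9)y^3/x}=\tfrac{3}{4}x(-y)^{-5/2}e^{\frac{2}{9}\frac{y^3}{x}}$, as required.

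The only subtlety is bookkeeping: tracking the exponents of $(2/9)$, the signs coming from $\eta<0$, and the transformation between $U$ and $V$ in part (3). Once those are handled carefully, the result falls out of the scaling property and the classical Tricomi asymptotics; there is no genuine analytic obstacle, since the hypergeometric asymptotics in both regimes are already stated in the form required.
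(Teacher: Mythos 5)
Your proposal is correct and follows essentially the same route as the paper: the scaling identity $h(x,y)=x^{1/6}g(x^{-1/3}y)$ together with continuity of $g$ for part (1), and the Tricomi asymptotic $U(a,c,z)\sim z^{-a}$ applied to the hypergeometric representations of $g$ (via $V(a,c,z)=e^zU(c-a,c,-z)$ in the negative regime) for parts (2) and (3). The constant bookkeeping in part (3), giving $(2/9)^{1/6}\tfrac16(2/9)^{-7/6}=\tfrac34$, matches the paper's computation.
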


\begin{proof}
A. This is a simple consequence of the representation of $h$ and $g$. 

B. For $y\ra +\infty$ we have 

\[
g(y) = \left(\frac{2}{9}\right)^{\frac{1}{6}}yU\left(\frac{1}{6},\frac{4}{3},\frac{2}{9}y^3\right)\sim \left(\frac{2}{9}\right)^{\frac{1}{6}}y\left(\frac{2}{9}y^3\right)^{-\frac{1}{6}} = y^{\frac{1}{2}},
\]
where we have used the definition of $U$. Thus, we have for $h$
\[
h(x,y) = x^{\frac{1}{6}}g(x^{-\frac{1}{3}}y)\sim x^{\frac{1}{6}}(x^{-\frac{1}{3}}y)^{\frac{1}{2}} = y^{\frac{1}{2}}. 
\]

C. For $y\ra-\infty$ we have 

\begin{align*}
    g(y) &= - \left(\frac{2}{9}\right)^{\frac{1}{6}}\frac{1}{6}yV\left(\frac{1}{6},\frac{4}{3},\frac{2}{9}y^3\right) = - \left(\frac{2}{9}\right)^{\frac{1}{6}}\frac{1}{6}ye^{\frac{2}{9}y^3}U\left(\frac{7}{6},\frac{4}{3},-\frac{2}{9}y^3\right)\\&\sim -  \left(\frac{2}{9}\right)^{\frac{1}{6}}\frac{1}{6}ye^{\frac{2}{9}y^3} \left(-\frac{2}{9}y^3\right)^{-\frac{7}{6}} = \frac{3}{4}(-y)^{-\frac{5}{2}}e^{\frac{2}{9}y^3}. 
\end{align*}
Here we have used the properties of $U$. Thus we have for $h$
\[
h(x,y) = x^{\frac{1}{6}}g(x^{-\frac{1}{3}}y)\sim x^{\frac{1}{6}}\frac{3}{4}(-x^{-\frac{1}{3}}y)^{-\frac{5}{2}}e^{\frac{2}{9}\frac{y^3}{x}} = \frac{3}{4}x(-y)^{-\frac{5}{2}}e^{\frac{2}{9}\frac{y^3}{x}}.
\]

\end{proof}

Next, we establish some useful estimates. In the following we use the shorthand $$q_t(x,y;u,v) :=  p_t(x,y;u,v) - p_t(x,y;u,-v).$$
\begin{lemma}
\label{thm:lemmahelpintegral1}
(a) Define the quadratic forms $Q_1(u,v,z) = 6u^2-6uv+2v^2+2z^2-6uz+2vz$ and $Q_2(u,v,z) = 6u^2-6uv+2v^2+2z^2+6uz-2vz$. Then it follows 
\[
Q_1(u,v,z)\ge c[(z+u)^2+(v-u)^2],
\]
\[
Q_2(u,v,z) \ge   c[(z-u)^2+(v-u)^2].
\]


(b) Define
\[
\tilde\rho(u,v,z) := (u+v)\exp(-(u-v)^2)\left[\exp(-(z-u)^2)+\exp(-(z+u)^2)\right].
\]
Then the function $\R_+\times\R\ni (u,v)\mapsto h(u,v)\sup_{|z|\le\epsilon}\tilde\rho(u,v,z)$ is integrable for any $\epsilon>0$.  

(c) For all $u,z>0$ and $v$ 
\begin{equation*}
p_1(u,-v;0,\pm z)\leq\frac{\sqrt[]{3}}{\pi}\exp\left(-6u^2-v^2+9-z^2\right).
\end{equation*}
(d) For all $u,z>0,v$ and $s\in (0,1)$
\begin{equation*}
|q_{1-s}(u,-v;0,\pm z)|\leq\frac{C}{(1-s)^2}\exp\left(-\frac{6u^2}{(1-s)^3}-\frac{v^2+z^2}{1-s}\right). 
\end{equation*}
\end{lemma}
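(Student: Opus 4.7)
For part (a), the forms $Q_1,Q_2$ are rank-two positive semidefinite quadratic forms in $(u,v,z)$. The most direct route is a linear change of variables adapted to the kernel of each form. For $Q_1$, set $a=v-u$ and $b=z-u$; a direct expansion gives $Q_1=2(a^2+ab+b^2)$, and the symmetric matrix $\bigl(\begin{smallmatrix}1&1/2\\ 1/2&1\end{smallmatrix}\bigr)$ has smallest eigenvalue $\tfrac12$, yielding $Q_1\ge (v-u)^2+(z-u)^2$. Setting $b=z+u$ instead and repeating the computation gives $Q_2\ge (v-u)^2+(z+u)^2$ (the pairing of $(z\pm u)^2$ with $Q_1$ versus $Q_2$ in the displayed inequalities above seems to be reversed from what the algebra produces, but only the combined factor $(v-u)^2$ with one of $(z\pm u)^2$ is used in the sequel). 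A useful alternative presentation, which I will rely on in part~(c), is the completed-square form $Q_1 = 6\bigl(u-\tfrac{v+z}{2}\bigr)^2+\tfrac12(v-z)^2$ and $Q_2 = 6\bigl(u-\tfrac{v-z}{2}\bigr)^2+\tfrac12(v+z)^2$, obtained by completing the square in $u$.

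Part (b) combines the asymptotics of $h$ from Lemma \ref{thm:convbehaviorh} with the Gaussian decay in $\tilde\rho$. In each of the three regimes $u^{-1/3}v\to c\in\mathbb R$, $\to+\infty$, $\to-\infty$, the lemma gives at most polynomial growth of $h(u,v)$ in the first two regimes and an exponentially small factor in the third. The factor $\exp(-(u-v)^2)\sup_{|z|\le\epsilon}[\exp(-(z-u)^2)+\exp(-(z+u)^2)]$ provides Gaussian decay transverse to the diagonal $v=u$ and, via the supremum over a bounded set of $z$, Gaussian decay in $|u|$ whenever $|u|\ge 2\epsilon$. Splitting the domain according to $\{u\le 2\epsilon\}$ vs.\ $\{u>2\epsilon\}$ and to $\{|v-u|\le M\}$ vs.\ $\{|v-u|>M\}$, Fubini on each piece gives finiteness of the integral; on the relatively diagonal piece one uses the decay in $u$ together with the polynomial bound on $h(u,v)\le h(u,u)\cdot(\text{correction})\lesssim u^{1/2}$.

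For (c) and (d), the key is the explicit computation of the quadratic exponent of $p_1(u,-v;0,\pm z)$, which equals $-Q_1$ or $-Q_2$. Part~(c) amounts to rearranging this exponent using the completed-square form from (a) and applying Young's inequality $2|ab|\le\eta a^2+\eta^{-1}b^2$ with a suitable $\eta$ to extract positive decay coefficients in each of $u^2$, $v^2$, $z^2$ (the residual cross-terms being absorbed into the constant ``$+9$''). Part~(d) then follows from (c) by the scaling identity $p_{1-s}(u,-v;0,\pm z)=(1-s)^{-2}\,p_1\bigl(u/(1-s)^{3/2},-v/(1-s)^{1/2};0,\pm z/(1-s)^{1/2}\bigr)$, which automatically produces the claimed factors $(1-s)^{-3}$ on $u^2$ and $(1-s)^{-1}$ on $v^2,z^2$. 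The main obstacle is (c): because each $Q_i$ is degenerate (it vanishes along a line), producing an upper bound on the exponent with \emph{positive} decay in each of $u,v,z$ simultaneously requires the finer completed-square decomposition rather than the coarser bound from (a), and some care with the free parameter in Young's inequality is needed to get the stated numerical constants.
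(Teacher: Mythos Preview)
For part (a), your change of variables $a=v-u$, $b=z-u$ yielding $Q_1=2(a^2+ab+b^2)\ge a^2+b^2$ is cleaner than the paper's explicit $3\times 3$ diagonalization, and you correctly spot that the pairing of $Q_1,Q_2$ with $(z\pm u)^2$ in the displayed statement is swapped: the null direction of $Q_1$ is $u=v=z$, which forces $(z-u)^2$ on the right, not $(z+u)^2$. For part (b) your case analysis is in the same spirit as the paper's; the paper works with the cruder bound $h(u,v)\le C\alpha(u,v)^{1/2}$ and splits along $|u-v|\le 1$, $u\ge v+1$, $v\ge u+1$, which is marginally more economical than invoking Lemma~\ref{thm:convbehaviorh}.

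For (c) and (d) your overall plan---identify the exponent of $p_1(u,-v;0,\pm z)$ as $-Q_{2}$ respectively $-Q_1$, rearrange, then scale to get (d) from (c)---matches the paper's. The genuine gap is that the pointwise bound (c) cannot be obtained by any such rearrangement: take the minus sign and set $u=v=z=t$; then $Q_1(t,t,t)=0$ while the claimed lower bound on the exponent is $6t^2+t^2+t^2-9=8t^2-9$, which is positive once $t>3/(2\sqrt2)$. The same failure occurs for the plus sign along $v=z+2u$. So the degeneracy you flag is not merely an obstacle to be overcome by ``care with the free parameter in Young's inequality''; it makes the stated (c), and hence (d), false for unbounded $u$. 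The paper's own derivation reaches (c) only through what appears to be a dropped factor of $u$ in the exponent (it writes $6(v-z)$ where $6u(v-z)$ is correct; maximizing in $v$ then produces the spurious constant $9$ instead of $9u^2$), so the defect lies in the statement rather than in your strategy. If you want a usable replacement, your completed-square form $Q_i=6\bigl(u-\tfrac{v\mp z}{2}\bigr)^2+\tfrac12(v\pm z)^2$ already gives bounds sufficient for the downstream applications in Steps~2--3 of the proof of Proposition~\ref{thm:garbitlike}, but not the uniform separable Gaussian decay in $u,v,z$ asserted in (c)--(d).
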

\begin{proof}
(a) This can be proved through lengthy algebra. Another way forward is to note that $Q_1(\tilde z) = \tilde z^t A\tilde z, \tilde z\in \R^3$ with the positive semidefinite matrix 
\[
A = \begin{pmatrix}
6 & -3 & -3\\
-3 & 2 & 1\\
-3 & 1 & 2
\end{pmatrix}.
\]
$A$ has eigenvalues $0,1,9$ and diagonalization given by 
\[
A = \begin{pmatrix}
1 & 0 & -2\\
1 & -1 & 1\\
1 & 1 & 1
\end{pmatrix}\cdot
\begin{pmatrix}
0 & 0 & 0\\
0 & 1 & 0\\
0 & 0 & 9
\end{pmatrix}\cdot
\begin{pmatrix}
\frac{1}{3} & \frac{1}{3} & \frac{1}{3}\\
0 & -\frac{1}{2} & \frac{1}{2}\\
-\frac{1}{3} & \frac{1}{6} & \frac{1}{6}
\end{pmatrix}.
\]
From here, simple algebra delivers the estimate for $Q_1$. 

Similarly, $Q_2(\tilde z) = \tilde z^t B\tilde z, \tilde z\in \R^3$ with the positive semidefinite matrix
\[
B = \begin{pmatrix}
6 & -3 & 3\\
-3 & 2 & -1\\
3 & -1 & 2
\end{pmatrix}.
\]
$B$ has eigenvalues $0,1,9$ and diagonalization given by 
\[
B = \begin{pmatrix}
-1 & 0 & 2\\
-1 & 1 & -1\\
1 & 1 & 1
\end{pmatrix}\cdot
\begin{pmatrix}
0 & 0 & 0\\
0 & 1 & 0\\
0 & 0 & 9
\end{pmatrix}\cdot
\begin{pmatrix}
-\frac{1}{3} & -\frac{1}{3} & \frac{1}{3}\\
0 & \frac{1}{2} & \frac{1}{2}\\
\frac{1}{3} & -\frac{1}{6} & \frac{1}{6}
\end{pmatrix}.
\]
From here, simple algebra delivers the estimate for $Q_2$. 



(b) Recall that $h(u,v)\le c\alpha(u,v)^{\frac{1}{2}}$. Note also that $u+v\le c(\alpha(u,v)^3+\alpha(u,v))$. Hence we concentrate on the exponential part
\[
\iota:(u,v,z)\mapsto
\exp(-(u-v)^2)\left[\exp(-(z-u)^2)+\exp(-(z+u)^2)\right].
\]

Suppose that $|u-v|\le 1$. Then trivially $u\ge R\implies v\ge R-1$ and $v\ge R\implies u\ge R-1$. Hence this region does not cause problems for integrability. 


Suppose that $u\ge v+1$. In particular, $u\ge v$. Then for every $z\le \epsilon$ it holds $\iota(u,v,z)\le C_{\epsilon}\exp(-\frac{1}{2}(u^2+v^2))$. This holds true both when $v\ge 0$ and when $v<0$. 

Finally, suppose that $v\ge u+1$. In particular, $v>0$. Then $\exp(-(u-v)^2)\le \exp(-v)\exp(u)$, so that altogether, for every $|z|\le\epsilon$, it holds $\iota(u,v,z)\le C_{\epsilon}\exp(-v)\exp(c_{\epsilon}(u - u^2))$. Overall, it follows that the function is integrable over the  region $|u-v|\ge 1$ also. This finishes the proof of part (b).

(c) Recall that 
\begin{align*}
p_1(u,-v;0,z) &= \frac{\sqrt[]{3}}{\pi}\exp\left(-6u^2+6(v-z)-2(z^2-vz+v^2)\right)\\
&= \frac{\sqrt[]{3}}{\pi}\exp\left(-6u^2-2v^2 +(v-z)(6+2z)\right).
\end{align*}
Maximize the parabola $-v^2+(v-z)(6+2z)$ w.r.t $v$ and substitute the value of the maximized function in the exponent. This delivers the statement.

(d) This is straightforward from (c) and the scaling property of $\bar p$. 
\end{proof}

Next, we register a helpful result about densities (see \citet[p.128]{L4}).
\begin{lemma}[Equation for the densities]\label{thm:lemmaeqdensities}
For $x,y,u,v \in \mathbb{R}$ we have
\begin{align*}
p_t(x,y;u,v) = p_t(u, -v; x, -y), \\
\nonumber
\overline{p}_t(x,y;u,v) = \overline{p}_t(u, -v; x, -y).
\end{align*}
\end{lemma}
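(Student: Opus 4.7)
The first identity should be obtainable by direct algebraic substitution into the second (completed-square) form of $p_t$ given in \eqref{eq:pt}:
\[
p_t(x,y;u,v) = \frac{\sqrt{3}}{\pi t^2}\exp\left(-\frac{6(u-x)^2}{t^3} + \frac{6(u-x)(v+y)}{t^2} - \frac{2(v^2+vy+y^2)}{t}\right).
\]
Under the substitution $(x,y;u,v)\mapsto(u,-v;x,-y)$ the prefactor and the first term of the exponent are manifestly invariant; the third term becomes $((-y)^2+(-y)(-v)+(-v)^2)=y^2+yv+v^2$ (identical); and the cross term transforms as $6(x-u)(-y-v)/t^2 = 6(u-x)(v+y)/t^2$. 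Hence $p_t(x,y;u,v)=p_t(u,-v;x,-y)$.

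For the second identity I would run a pathwise time-reversal-plus-reflection argument. Given a realization of the Kolmogorov diffusion $(U_s,V_s)_{s\in[0,t]}$ driven by a Brownian motion $B$ and started at $(x,y)$, define
\[
\tilde U_s := U_{t-s}, \qquad \tilde V_s := -V_{t-s}, \qquad s\in[0,t].
\]
Using $V_s=y+B_s$ and $U_s=x+\int_0^s V_r\,dr$, the substitution $r'=t-r$ in the integral yields
\[
\tilde V_s = -v + \tilde B_s, \qquad \tilde U_s = u + \int_0^s \tilde V_r\,dr,
\]
where $\tilde B_s := B_t-B_{t-s}$ is, by the classical time-reversal property, again a standard Brownian motion on $[0,t]$. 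Thus $(\tilde U,\tilde V)$ has the law of a Kolmogorov diffusion started at $(u,-v)$, and by construction $(\tilde U_t,\tilde V_t)=(x,-y)$. The map $(U,V)\leftrightarrow(\tilde U,\tilde V)$ is a measure-preserving involution between the corresponding path spaces.

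Finally, the positivity event $\{\tau^{bm}>t\}=\{U_s>0\text{ for all }s\in[0,t]\}$ is invariant under the re-indexing $s\mapsto t-s$, since $\tilde U_s>0 \iff U_{t-s}>0$. Hence the involution above restricts to a bijection between killed paths from $(x,y)$ to $(u,v)$ and killed paths from $(u,-v)$ to $(x,-y)$. Passing to endpoint-densities then yields $\overline p_t(x,y;u,v)=\overline p_t(u,-v;x,-y)$. I do not anticipate a genuine obstacle: the first identity is one line of algebra, and the second is a textbook time-reversal calculation. The only point requiring care is to record cleanly that the reversed increments $B_t-B_{t-s}$ form a standard Brownian motion, which is immediate from Gaussianity and the covariance $\E[(B_t-B_{t-s})(B_t-B_{t-s'})] = \min(s,s')$.
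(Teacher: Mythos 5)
Your proposal is correct in substance, but it takes a different route from the paper: the paper does not prove this lemma at all, it simply quotes it from Lachal (\citet[p.128]{L4}), whereas you give a self-contained verification. Your algebra for the first identity is right: in the completed-square form of \eqref{eq:pt} the substitution $(x,y;u,v)\mapsto(u,-v;x,-y)$ leaves the prefactor, the term $(u-x)^2$, the cross term $6(u-x)(v+y)/t^2$ and the form $v^2+vy+y^2$ invariant, so $p_t(x,y;u,v)=p_t(u,-v;x,-y)$ in one line. For the killed density your time-reversal-plus-reflection idea is the standard and correct mechanism, but as written the phrase ``measure-preserving involution between the corresponding path spaces'' glosses over the fact that the reversed path $(\tilde U,\tilde V)$ starts at the \emph{random} point $(U_t,-V_t)$, so the reversal does not directly map $\pr_{(x,y)}$ to $\pr_{(u,-v)}$; one must either condition on the endpoint (work with bridges) or, more simply, prove the identity on finite-dimensional distributions: writing, for a partition $0=t_0<\dots<t_n=t$ with $z_0=(x,y)$, $z_n=(u,v)$,
\begin{equation*}
\int\cdots\int \prod_{i=1}^{n} p_{t_i-t_{i-1}}(z_{i-1};z_i)\prod_{i=1}^{n-1}\mathbf{1}_{\{z_i\in(0,\infty)\times\R\}}\,dz_1\cdots dz_{n-1},
\end{equation*}
apply the first identity to each factor and relabel $z_i\mapsto (z_{n-i,1},-z_{n-i,2})$ to see that the expression is symmetric in the sense claimed, and then let the mesh go to zero, using continuity of $U$ so that the skeleton events decrease to $\{\tau^{bm}>t\}$ (up to the null boundary set). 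With that precision added, your argument is complete; what it buys over the paper's citation is an elementary, self-contained proof, at the cost of spelling out a routine discretization step that Lachal's reference already covers.
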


As a next step we prove a decomposition for $\bar{p}_1(x,y;u,v)$, which is needed in the proof of the Propositions \ref{thm:garbitlike} and \ref{thm:propbarp}. 

\begin{lemma}
\label{thm:decomposition}
It holds true for $x$,$u > 0$ and $y$,$v \in \mathbb{R}$
\begin{align*}
\overline{p}_t(x,y;u,v) &= p_t(x,y;u,v) - p_t(x,y;-u,-v)  \\
                        &- \int_{0}^t \int_{0}^\infty q_{t-s}(u,-v;0,z)|z|q_s(x,y;0,-z) 
\textrm{d}w \textrm{d}r \textrm{d}z \textrm{d}s\\&
+\int_0^1\int_0^{\infty}\int_0^s\int_0^{\infty}zq_{1-s}(u,-v;0,z)f_r(0,-z;0,w)q_{s-r}(x,y;0,-w)dwdrdzds.
\end{align*}

\end{lemma}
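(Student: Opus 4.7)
The plan is to derive the decomposition in three steps: (i) apply Lachal's first-passage identity from Lemma \ref{thm:formulabarp}; (ii) produce the reflected term $p_t(x,y;-u,-v)$ via a second application of the strong Markov property; (iii) use an auxiliary identity to rewrite the hitting-time density in terms of $q$.

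To begin, I would substitute $z\mapsto -z$ in the integral of Lemma \ref{thm:formulabarp} (the set $A=(-\infty,0]$ since $x>0$) and use the time-reversal identity $p_{t-s}(0,-z;u,v)=p_{t-s}(u,-v;0,z)$ from Lemma \ref{thm:lemmaeqdensities} to obtain
\[
\overline{p}_t(x,y;u,v)=p_t(x,y;u,v)-\int_0^t\int_0^\infty f_s(x,y;0,-z)\,p_{t-s}(u,-v;0,z)\,dz\,ds.
\]
Since $x>0$ and $-u<0$, every continuous path from $(x,y)$ to $(-u,-v)$ must cross $\{U=0\}$; applying strong Markov at $\tau_0$ together with $p_{t-s}(0,-z;-u,-v)=p_{t-s}(u,-v;0,-z)$ (via time reversal and the symmetry $p_t(a,b;c,d)=p_t(-a,-b;-c,-d)$) gives
\[
p_t(x,y;-u,-v)=\int_0^t\int_0^\infty f_s(x,y;0,-z)\,p_{t-s}(u,-v;0,-z)\,dz\,ds.
\]
Subtracting these two expressions and using the definition $q_{t-s}(u,-v;0,z)=p_{t-s}(u,-v;0,z)-p_{t-s}(u,-v;0,-z)$ produces the intermediate identity
\[
\overline{p}_t(x,y;u,v)=p_t(x,y;u,v)-p_t(x,y;-u,-v)-\int_0^t\int_0^\infty f_s(x,y;0,-z)\,q_{t-s}(u,-v;0,z)\,dz\,ds.
\]

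It remains to express $f_s(x,y;0,-z)$ itself through $q$. The expansion from Lemma \ref{thm:jointdistlemma} reads $f_s(x,y;0,-z)/z=p_s(x,y;0,-z)-\int_0^s\int_0^\infty f_r(0,-z;0,w)\,p_{s-r}(x,y;0,w)\,dw\,dr$. The step that introduces the $q$'s is the auxiliary identity
\[
p_s(x,y;0,z)=\int_0^s\int_0^\infty f_r(0,-z;0,w)\,p_{s-r}(x,y;0,-w)\,dw\,dr,\qquad z>0,
\]
which after time-reversing both sides reduces to the strong Markov decomposition of the Kolmogorov diffusion started at $(0,-z)$: its initial velocity is $-z<0$, so it immediately enters $\{U<0\}$ and must cross $\{U=0\}$ before it can reach $(x,-y)$ in $\{U>0\}$. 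Writing $p_s(x,y;0,-z)=q_s(x,y;0,-z)+p_s(x,y;0,z)$ and subtracting the auxiliary identity yields
\[
\frac{f_s(x,y;0,-z)}{z}=q_s(x,y;0,-z)+\int_0^s\int_0^\infty f_r(0,-z;0,w)\,q_{s-r}(x,y;0,-w)\,dw\,dr.
\]
Plugging this expansion of $f_s$ into the intermediate identity from the previous paragraph produces the claimed decomposition.

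The only non-routine ingredient is the auxiliary identity $p_s(x,y;0,z)=\int_0^s\int_0^\infty f_r(0,-z;0,w)\,p_{s-r}(x,y;0,-w)\,dw\,dr$: its content is probabilistic and depends crucially on the sign of the initial velocity at $(0,-z)$, which forces an immediate excursion into $\{U<0\}$ and hence a return through $\{U=0\}$. All remaining manipulations are bookkeeping combining Lemmas \ref{thm:jointdistlemma}, \ref{thm:formulabarp} and \ref{thm:lemmaeqdensities} with Chapman--Kolmogorov.
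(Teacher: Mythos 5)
Your proposal is correct and follows essentially the same route as the paper's own proof: Lachal's identity from Lemma \ref{thm:formulabarp} plus the substitution $z\mapsto -z$ and time reversal (Lemma \ref{thm:lemmaeqdensities}), the strong-Markov/symmetry identity for $p_t(x,y;-u,-v)$ giving the intermediate formula, and then the expansion of $f_s(x,y;0,-z)$ via Lemma \ref{thm:jointdistlemma} together with the same auxiliary identity $p_s(x,y;0,z)=\int_0^s\int_0^\infty f_r(0,-z;0,w)p_{s-r}(x,y;0,-w)\,dw\,dr$, justified exactly as in the paper by the forced crossing of $\{U=0\}$ when starting at $(0,-z)$ with negative velocity. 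One remark: substituting your (correct) expansion $f_s(x,y;0,-z)/z=q_s(x,y;0,-z)+\int_0^s\int_0^\infty f_r(0,-z;0,w)q_{s-r}(x,y;0,-w)\,dw\,dr$ into the intermediate identity gives the four-fold term as $+\int\!\!\int\!\!\int\!\!\int z\,q_{t-s}(u,-v;0,z)f_r(0,-z;0,w)q_{s-r}(x,y;0,w)$ (equivalently with a minus sign and argument $-w$), which agrees with the paper's proof body; the sign/argument combination printed in the lemma statement (and the $1$ instead of $t$ in the limits) is a typo there, not a gap in your argument.
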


\begin{proof}
Recalling Lemma \ref{thm:formulabarp}, we have for $x$,$u > 0$ and $y$,$v \in \mathbb{R}$ that
\begin{equation}\label{eq:firstbarp}
\overline{p}_t(x,y;u,v) = p_t(x,y;u,v) - \int_0^t \int_{-\infty}^0 f_s(x,y;0,z) p_{t-s}(0,z;u,v) \textrm{d}z \textrm{d}s.  
\end{equation}
By $p_t(x,y;u,v) = p_t(u,-v,x,-y)$ (see Lemma \ref{thm:lemmaeqdensities}) and the substitution $z \rightarrow -z$ equals
\begin{equation}\label{eq:secondbarp}
p_t(x,y;u,v) - \int_{0}^t \int_{0}^\infty f_s(x,y;0,-z) p_{t-s}(u,-v;0,z) \textrm{d}z \textrm{d}s. 
\end{equation}
As $q_t(x,y;u,v) := p_t(x,y,u,v) - p_t(x,y;u,-v)$, \eqref{eq:secondbarp} equals
\begin{align}\label{eq:thirdbarp}
& p_t(x,y;u,v) - \int_{0}^t \int_{0}^\infty f_s(x,y;0,-z) (q_{t-s}(u,-v;0,z)+p_{t-s}(u,-v;0,-z)) \textrm{d}z \textrm{d}s. 
\end{align} 
Because every path which goes from $(x,y)$ to $(-u,-v)$ in time $t$ has to pass through $\{0\} \times (-\infty,0]$ at least once, we have using the symmetry of density, that
\begin{align}\label{eq:fourthbarp}
~& p_t(x,y;-u-v) \\
\nonumber
=&\int_{0}^t \int_{0}^\infty f_s(x,y;0,-z) p_{t-s}(0,-z;-u,-v)) \textrm{d}z \textrm{d}s  \\
\nonumber
=&\int_{0}^t \int_{0}^\infty f_s(x,y;0,-z) p_{t-s}(0,z;u,v))  \textrm{d}z \textrm{d}s \\
\nonumber
=&\int_{0}^t \int_{0}^\infty f_s(x,y;0,-z) p_{t-s}(u,-v;0,-z)) \textrm{d}z \textrm{d}s.
\end{align} 
Applying \eqref{eq:fourthbarp} to \eqref{eq:thirdbarp}, we get that $\overline{p}_t(x,y;u,v)$ equals
\begin{align}\label{eq:fifthbarp}
&p_t(x,y;u,v) - p_t(x,y;-u-v) - \int_{0}^t \int_{0}^\infty f_s(x,y;0,-z) q_{t-s}(u,-v;0,z) \textrm{d}z \textrm{d}s.
\end{align}
Now we look closer at the double integral in \eqref{eq:fifthbarp} and apply Lemma \ref{thm:jointdistlemma}:
\begin{align}\label{eq:sixtheq}
\nonumber
&\int_{0}^t \int_{0}^\infty f_s(x,y;0,-z) q_{t-s}(u,-v;0,z) \textrm{d}z \textrm{d}s = \\
& \int_{0}^t \int_{0}^\infty q_{t-s}(u,-v;0,z)|z|\Big[p_s(x,y;0,-z) -  \\
\nonumber
& ~~~~~~~~~ \int_{0}^{s} \int_0^{\infty} f_r(0,-|z|;0,w) p_{s-r}(x,y;0,-w) \textrm{d}w \textrm{d}r \Big] \textrm{d}z \textrm{d}s.  
\end{align}
 Since every path from $(0,-z)$ to $(x,-y)$ hits $\{0\} \times [0, \infty)$ at least once, we have
\begin{align*}
~&\int_{0}^s \int_{0}^\infty f_r(0,-z;0,w) p_{s-r}(x,y;0,-w)) \textrm{d}w \textrm{d}r \\
\nonumber
=&\int_{0}^s \int_{0}^\infty f_r(0,-z;0,w) p_{s-r}(0,w;x,-y)) \textrm{d}w \textrm{d}r \\
\nonumber
=& p_s(0,-z,x,-y) = p_s(x,y;0,z),
\end{align*}
where we have used Markov property.
By this calculation and the definition of $q$ the integrand of \eqref{eq:sixtheq} is

\[
q_{t-s}(u,-v;0,z)|z|\left[q_s(x,y;0,-z)-\int_0^s\int_0^{\infty}f_r(0,-|z|;0,w)q_{s-r}(x,y;0,w)dwdr\right].
\]
This gives the result.
\end{proof}




\section{Proofs of Propositions \ref{thm:garbitlike} and \ref{thm:propbarp}}



\begin{proof}[Proof of Proposition \ref{thm:garbitlike}]
We recall the decomposition of $\bar{p}$ from Lemma \ref{thm:decomposition}. 
\begin{align*}
\overline{p}_t(x,y;u,v) &= p_t(x,y;u,v) - p_t(x,y;-u,-v)  \\
                        &- \int_{0}^t \int_{0}^\infty q_{t-s}(u,-v;0,z)|z|q_s(x,y;0,-z) 
\textrm{d}w \textrm{d}r \textrm{d}z \textrm{d}s\\&
+\int_0^1\int_0^{\infty}\int_0^s\int_0^{\infty}zq_{1-s}(u,-v;0,z)f_r(0,-z;0,w)q_{s-r}(x,y;0,-w)dwdrdzds.
\end{align*}


\paragraph{\textbf{Step 1.}} First we deal with the part 
\[
p_1(x,y;u,v) - p_1(x,y;-u,-v).
\]
Using the definition of $p_1$ this is equal to
\begin{align*}
&p_1(x,y;u,v) - p_1(x,y;-u,-v) \\&= \frac{2\sqrt{3}}{\pi}\exp\left(-6(u^2+x^2)+6(uv-xy)-2(v^2+y^2)\right)\sinh\left(12ux+6(uy-vx)-2vy\right).
\end{align*}
Now recall, that there exists $c>0$ with 
\[
-6u^2\pm 6uv-2v^2\leq -c(u^2+v^2).
\]
It follows 
\[
\exp\left(-6(u^2+x^2)+6(uv-xy)-2(v^2+y^2)\right)\leq \exp\left(-c(u^2+v^2)\right)\exp\left(-c(x^2+y^2)\right).
\]
Here, we have used $x,y>0$. 

Using the estimate $\sinh(x)\leq x\cosh(x)\leq xe^x$ for $x\geq 0$ it follows that there exists some $c'>0$ with  
\[
\sinh\left(12ux+6(uy-vx)-2vy\right)\leq (12|u|+6|v|)(x+y)\exp(c'(|u|+|v|)).
\]
Here we have used that $(x,y)$ is bounded. Recall from Lemma 6 in \citet{dw12}
\[
h(x,y)\geq c\max\{x^{\frac{1}{6}},y^{\frac{1}{2}}\},
\]
with $c>0$ suitable.
In all, we have for suitable $\gamma>0$
\[
|p_1(x,y;u,v) - p_1(x,y;-u,-v)|=o(1) h(x,y)\exp\left(-\gamma(u^2+v^2)\right), \textrm{ as }x,y\searrow 0.
\]
\paragraph{\textbf{Step 2.}}Next we look at 
\begin{equation*}
\frac{1}{h(x,y)}\int_{0}^1\int_0^{\infty}q_{1-s}(u,-v;0,z)|z|q_s(x,y;0,-z)dzds.
\end{equation*}
We first consider the integration domain $0<\epsilon<s<1$ for an $\epsilon\in (0,1)$ fixed throughout. Using the estimates $\sinh(x)\leq |x|\cosh(x)$, Lemma 6 in \citet{dw12}, $\epsilon<s$ and the definition for $p_s$ we have
\begin{align*}
q_s(x,y;0,-z) &= \frac{2\sqrt{3}}{\pi s^2}\exp\left(-\frac{6x^2}{s^3}-\frac{6xy}{s^2}-\frac{2y^2+2z^2}{s}\right)\sinh\left(\frac{6xz}{s^2}+\frac{2yz}{s}\right)\\
&\leq \left(\frac{6xz}{s^2}+\frac{2yz}{s}\right)\left(p_s(x,y;0,z)+p_s(x,y;0,-z)\right)\\
&\leq C_{\epsilon}z(x+y) = C_{\epsilon}zh(x,y)o(1),\textrm{ as }x,y\searrow 0.
\end{align*}
In the last inequality we use extensively \eqref{eq:pt} and the fact that $\alpha(x,y)\le \frac{1}{2}$. For more details, note first that 
\begin{align*}
    p_s(x,y;0,z)+p_s(x,y;0,-z) &= \frac{\sqrt{3}}{\pi s^2}\exp\left(-\frac{6x^2}{s^3}-\frac{6x(z+y)}{s^2}-\frac{2(z^2+zy+y^2)}{s}\right) \\
    +&\frac{\sqrt{3}}{\pi s^2}\exp\left(-\frac{6x^2}{s^3}-\frac{6x(y-z)}{s^2}-\frac{2(z^2-zy+y^2)}{s}\right).\end{align*}
Note also that the quadratic forms $(z,y)\mapsto z^2\pm zy+y^2$ are positive definite. This delivers easily that for $s\in [\epsilon,1], (x,y)\in\R_+\times\R_+$ such that $\alpha(x,y)\le \frac{1}{2}$, the sum $p_s(x,y;0,z)+p_s(x,y;0,-z)$ is bounded by a positive constant that depends on $\epsilon$. 

Now we look to control the integral 
\begin{equation*}
\int_{\epsilon}^1\int_0^{\infty} z^2q_{1-s}(u,-v;0,z)dzds.
\end{equation*}

Use (d) from Lemma \ref{thm:lemmahelpintegral1} to get
\begin{align*}
\int_{\epsilon}^1\int_0^{\infty} z^2q_{1-s}(u,-v;0,z)dzds&\leq C\int_{\epsilon}^1\int_0^{\infty}z^2(1-s)^{-2}\exp\left(-\frac{6u^2+v^2+z^2}{1-s}\right)dzds\\& = C\int_0^{\infty}t^2e^{-t^2}dt \cdot \int_0^{1-\epsilon}\frac{1}{\sqrt{s}}ds\cdot \exp\left(-\frac{6u^2+v^2}{1-\epsilon}\right)
\\&\leq C\exp(-\frac{C'}{1-\epsilon}(u^2+v^2)).
\end{align*}


Next we consider the integration domain $0\leq s\leq \epsilon<1$. We look at 
\begin{equation*}
\frac{1}{h(x,y)}\int_{0}^{\epsilon}\int_0^{\infty}q_{1-s}(u,-v;0,z)|z|q_s(x,y;0,-z)dzds.
\end{equation*}
We perform the change of variables $s\ra x^{\frac{2}{3}}s,z\ra x^{\frac{1}{3}}z$ to get after the scaling property of $q_s$
\begin{align*}
&\int_{0}^{\epsilon}\int_0^{\infty}q_{1-s}(u,-v;0,z)|z|q_s(x,y;0,-z)dzds \\&= \int_{0}^{x^{-\frac{2}{3}}\epsilon}\int_0^{\infty}x^{\frac{4}{3}}q_{1-x^{\frac{2}{3}}s}(u,-v;0,x^{\frac{1}{3}}z)|z|q_{x^{\frac{2}{3}}s}(x,y;0,-x^{\frac{1}{3}}z)dzds\\&=
\int_{0}^{x^{-\frac{2}{3}}\epsilon}\int_0^{\infty}q_{1-x^{\frac{2}{3}}s}(u,-v;0,x^{\frac{1}{3}}z)|z|q_{s}(1,x^{-\frac{1}{3}}y;0,-z)dzds.
\end{align*}
Now recall that $1-x^{\frac{2}{3}}s\in[1-\epsilon,1]$ for $s\in[0,x^{-\frac{2}{3}}\epsilon]$ and use (d) from Lemma \ref{thm:lemmahelpintegral1} to conclude that 
\begin{equation*}
q_{1-x^{\frac{2}{3}}s}(u,-v;0,x^{\frac{1}{3}}z)\leq C_{\epsilon}x^{\frac{1}{3}}z(u+v)\exp(-C(6u^2+v^2+x^{\frac{2}{3}}z^2)).
\end{equation*}
Furthermore, $h(x,y)\geq c|x|^{\frac{1}{6}}$ by Lemma 6 in \citet{dw12}, i.e. one has\\ $x^{\frac{2}{3}} = h(x,y)o(1)$ as $x,y\searrow 0$. Therefore, it suffices to show that the following expression is finite 
\begin{equation*}
\int_{0}^{\infty}\int_0^{\infty}zq_{s}(1,x^{-\frac{1}{3}}y;0,-z)dzds.
\end{equation*}
We use (d) from Lemma \ref{thm:lemmahelpintegral1} to see that it suffices 
\begin{equation*}
\int_{0}^{\infty}\int_0^{\infty}zs^{-2}e^{-\frac{1}{s^3}-\frac{z^2}{s}}dzds<\infty. 
\end{equation*}
This is true. 

These two steps establish the result. 


\paragraph{\textbf{Step 3.}} Finally, we have to look at the following four-fold integral.
\begin{align*}
I(x,y,u,v) = \int_0^1\int_0^{\infty}\int_0^s\int_0^{\infty}zq_{1-s}(u,-v;0,z)f_r(0,-z;0,w)q_{s-r}(x,y;0,w)dwdrdzds.
\end{align*}

Since the argument is particularly lengthy, we split this step into several intermediate steps.

\emph{Step 3-a: preparations.}

We first use the identity
\[
q_s(x,y;u,-v)=-q_s(x,y;u,v)
\]
twice and then substitute $r\ra s-r$ to get 
\begin{align*}
I(x,y,u,v)=\int_0^1\int_0^{\infty}\int_0^s\int_0^{\infty}zq_{1-s}(u,-v;0,-z)f_{s-r}(0,-z;0,w)q_{r}(x,y;0,-w)dwdrdzds.
\end{align*}
According to the formula for $f_r$ from Lemma \ref{thm:jointdistlemma} we have
\begin{equation*}
f_{s-r}(0,-z;0,w) = \frac{3we^{-\frac{2(z^2-zw+w^2)}{s-r}}}{\pi\sqrt[]{2\pi}(s-r)^2}\int_0^{\frac{4zw}{s-r}}\theta^{-\frac{1}{2}}e^{-\frac{3\theta}{2}}d\theta. 
\end{equation*}
Note that $\int_0^{x}\theta^{-\frac{1}{2}}e^{-\frac{3\theta}{2}}d\theta\le 2\sqrt{x}$ for all $x>0$. 
Note also that the quadratic form $(z,w)\mapsto 2(z^2-zw+w^2)$ is positive definite. These two facts deliver the estimate 
\begin{equation}
   \label{eq:helpherenow}
    zf_{s-r}(0,-z;0,w)\le C\frac{(zw)^{\frac{3}{2}}}{(s-r)^{\frac{5}{2}}}\exp(-c\frac{z^2+w^2}{s-r}).
\end{equation}
Replacing \eqref{eq:helpherenow} into $I$, recalling that $q_r(x,y;0-w)>0$ for $x,y,w>0$ and interchanging the integration sequence (the integrand is non-negative) delivers 
\begin{align*}
I(x,y,u,v)\le C\int_0^1\int_0^{\infty}w^{\frac{3}{2}}q_{r}(x,y;0,-w)\left(\int_0^{\infty}\int_r^1\frac{z^{\frac{3}{2}}\exp(-c\frac{z^2}{s-r})}{(s-r)^{\frac{5}{2}}}q_{1-s}(u,-v;0,-z)dsdz\right)dwdr.
\end{align*}

Recalling \eqref{eq:defh}, we focus the analysis in the following on the function 
\begin{align*}
    \psi(u,v,r,t,a,b) &:= \int_a^{b}\int_r^t\frac{z^{\frac{3}{2}}\exp(-c\frac{z^2}{s-r})}{(s-r)^{\frac{5}{2}}}q_{1-s}(u,-v;0,-z)dsdz,\\&\textrm{for } 0<r<t\le 1, u,v>0, 0\le a<b\le \infty.
\end{align*}

\emph{Step 3-b: integrability of $\R_+\times\R\ni (u,v)\mapsto h(u,v)\sup_{r\in [0,1]}\psi(u,v,r,1,\epsilon,\infty)$ for arbitrary $\epsilon>0$.}

Note that $t^{-\frac{5}{2}} e^{-\frac{c}{2} t^{2}} = O(1), t\ra\infty$. We use this fact, part (d) of Lemma \ref{thm:lemmahelpintegral1} together with the substitution $z\mapsto \frac{z}{\sqrt{s-r}}$ and the fact that $s-r\in (0,1)$ to estimate 
\begin{align*}
    \psi(u,v,r,\epsilon,\infty)\le C_{\epsilon}\left(\int_{0}^\infty z^4e^{-\frac{c}{2}z^2}dz\right)\cdot \int_0^1\frac{1}{(1-s)^2}\exp\left(-\frac{6u^2}{(1-s)^3}-\frac{v^2}{1-s}\right)ds. 
\end{align*}
$z\mapsto z^4e^{-\frac{c}{2}z^2}$ is integrable. Suppose that $u\ge R$. Then it follows 
\begin{align}
\label{eq:ja futim kot}
\int_0^1\frac{1}{(1-s)^2}\exp\left(-\frac{6u^2}{(1-s)^3}-\frac{v^2}{1-s}\right)ds &\nonumber\le C\exp(-c(u^2+v^2))\int_0^1\frac{1}{(1-s)^2}\exp\left(-\frac{R^2}{(1-s)^3}\right)ds\\&
\le C_R\exp(-c(u^2+v^2)).
\end{align}
Suppose that $u< R, |v|\ge R$. Then a similar estimate to \eqref{eq:ja futim kot} holds true. This finishes the integrability required.

\emph{Step 3-c: integrability of  $\R_+\times\R \ni(u,v)\mapsto h(u,v)\sup_{r,\epsilon: r+\epsilon<1}\psi(u,v,r+\epsilon,1,0,\epsilon)$.}

We use first a similar estimate as in Step 2 of the proof based on the fact that $\sinh(x)\le|x|\cosh(x)$. Namely, it holds
\begin{align*}
    q_{1-s}(u,-v;0,-z)&\le \frac{C}{(1-s)^2}\exp\left(-\frac{6u^2}{(1-s)^3}+\frac{6uv}{(1-s)^2}-\frac{2v^2+2z^2}{1-s}\right)\\&\times \sinh\left(z(\frac{6u}{(1-s)^2}-\frac{2v}{1-s})\right)\\
    &\le C z\left(\frac{6u}{(1-s)^2}+\frac{2v}{1-s}\right)\exp\left(-\frac{6u^2}{(1-s)^3}+\frac{6uv}{(1-s)^2}-\frac{2v^2+2z^2}{1-s}\right)\\&\times\frac{1}{(1-s)^2}\cosh\left(z(\frac{6u}{(1-s)^2}-\frac{2v}{1-s})\right).\\
    =C & z\rho(u,v,s,z),
\end{align*}
with 
\begin{align*}
\rho(u,v,s,z) &:= \left(\frac{6u}{(1-s)^2}+\frac{2v}{1-s}\right)\exp\left(-\frac{6u^2}{(1-s)^3}+\frac{6uv}{(1-s)^2}-\frac{2v^2+2z^2}{1-s}\right)\\&\times\frac{1}{(1-s)^2}\cosh\left(z(\frac{6u}{(1-s)^2}-\frac{2v}{1-s})\right).
\end{align*}
After straightforward algebra and use of part (a) of Lemma \ref{thm:lemmahelpintegral1} it follows that
\begin{align*}
\rho&(u,v,s,z) \le\bar\rho(u,v,s,z):=\frac{1}{(1-s)^\frac{5}{2}} \left(\frac{6u}{(1-s)^\frac{3}{2}}+\frac{2v}{\sqrt{1-s}}\right)\times\\& \left[\exp\left(-cQ_1\left(\frac{u}{(1-s)^{\frac{3}{2}}},\frac{v}{\sqrt{1-s}},\frac{z}{\sqrt{1-s}}\right)\right)+\exp\left(-cQ_2\left(\frac{u}{(1-s)^{\frac{3}{2}}},\frac{v}{\sqrt{1-s}},\frac{z}{\sqrt{1-s}}\right)\right)\right]. 
\end{align*}
To show integrability of integrability of  $\R_+\times\R \ni(u,v)\mapsto h(u,v)\psi(u,v,r+\epsilon,1,0,\epsilon)$ for $\epsilon>0$ small so that $\epsilon+r<1$, we note first that it is without loss of generality to show it for $r\le\frac{1}{2}$ and $\epsilon\le\frac{1}{4}$. This is because the integrand is non-negative for all $r\in (0,1),\epsilon,u,v>0$. We focus on such values of $r,\epsilon$ in the following. 

We use the substitution $z\mapsto \frac{z}{\sqrt{s-r}}$ to arrive at 
\[
\psi(u,v,r+\epsilon,1,0,\epsilon)\le \int_{r+\epsilon}^1\int_0^{\frac{\epsilon}{\sqrt{s-r}}} z^{\frac{5}{2}}e^{-cz^2}\frac{1}{(s-r)^{\frac{3}{4}}}\bar\rho(u,v,s,z\sqrt{s-r})dzds. 
\]
Note first, that $z\mapsto z^{\frac{5}{2}}e^{-cz^2},z\ge 0$ is bounded,  and that $\frac{1}{(s-r)^{\frac{3}{4}}}\le C_\epsilon$. 

We make use of the substitutions $u\mapsto\frac{u}{(1-s)^{\frac{3}{2}}}, (v,z)\mapsto\frac{(v,z)}{\sqrt{1-s}}$ together with the fact that $z\sqrt{s-r}\le \epsilon$ to see, that to complete Step 3-c, it is sufficient to show that
\[
\int_0^\infty\int_0^\infty h(u,v)\sup_{z\le \epsilon}\bar\rho(u,v,0,z)dudv 
\]
is finite. But this follows from easy substitutions and part (b) of Lemma \ref{thm:lemmahelpintegral1}.

\emph{Step 3-d: integrability of  $\R_+\times\R\times \ni(u,v)\mapsto h(u,v)\sup_{r,\epsilon:r+\epsilon<1}\psi(u,v,r,r+\epsilon,0,\epsilon)$.}

Suppose first that $r> \frac{1}{2}$. Then we are integrating $s$ over a range $[r,r+\epsilon]$, which can be expanded to $[\frac{1}{2},\frac{3}{4})\cup[\frac{3}{4},1]$ (recall that the integrand is non-negative). Step 3-c shows the integrability needed in the range $[\frac{3}{4},1]$. Hence, in the following we work on the integrability of $\R_+\times\R_+\times \ni(u,v)\mapsto h(u,v)\psi(u,v,r,\frac{3}{4},0,\epsilon)$ for $\epsilon>0$ for some $r\le \frac{1}{2}$. Note that $1-s\in [\frac{1}{4},1)$ under these conditions. 

Analogous to the Step 3-c, we use the substitution $z\mapsto \frac{z}{\sqrt{s-r}}$ to arrive at 
\[
\psi(u,v,r,\frac{3}{4},0,\epsilon)\le \int_{r}^\frac{3}{4}\int_0^{\frac{\epsilon}{\sqrt{s-r}}} z^{\frac{5}{2}}e^{-cz^2}\frac{1}{(s-r)^{\frac{3}{4}}}\bar\rho(u,v,s,z\sqrt{s-r})dzds. 
\]
Note that $z\sqrt{s-r}\le \epsilon$. We estimate for $z'\le \epsilon$
\begin{align*}
    \bar\rho&(u,v,s,z')\le C(u+v)\exp\left(-c(\frac{u}{1-s}-v)^2\right)\times\\&\left[\exp\left(-c(z'-\frac{u}{1-s})^2\right)+\exp\left(-c(z'-\frac{u}{1-s})^2\right)\right].
\end{align*}
By steps analogous to the proof of part (b) of Lemma \ref{thm:lemmahelpintegral1}, one arrives at the integrability of
\begin{equation}
    \label{eq:helpherelast}
    \R_+\times\R\ni (u,v)\mapsto h(u,v)\cdot\sup_{s\in [\frac{3}{4},0], z\le\epsilon}\bar\rho(u,v,s,z).
\end{equation}
Finally, recall that $(0,\infty)\ni z\mapsto z^{\frac{5}{2}}e^{-cz^2}$ is integrable, as is $(0,1)\ni t\mapsto \frac{1}{t^{\frac{3}{4}}}$.\footnote{We make the substition $t\mapsto s-r$ in $\int_r^{\frac{3}{4}}\frac{1}{(s-r)^{\frac{3}{4}}}ds$.} This, together with \eqref{eq:helpherelast} finishes the proof of Step 3-d.

\textbf{Step 4.} Step 3 has shown that the four-fold integral from the decomposition in Lemma \ref{thm:decomposition} leads to a bound of the type 

\[
I(x,y,u,v)\le C\int_0^1\int_0^{\infty}w^{\frac{3}{2}}q_{r}(x,y;0,-w)\psi(u,v,r,1,0,\infty)dwdr,
\]
with a function $\R_+\times\R\times(0,1)\ni (u,v,r)\mapsto \psi(u,v,r,1,0,\infty)$ so that 
\begin{equation}
    \label{eq:finally}
    \int_0^{\infty}\int_0^{\infty}h(u,v)\sup_{r\in (0,1)}\psi(u,v,r,1,0,\infty) dudv<\infty.
\end{equation}
Making use of the steps $1$ and $2$, of \eqref{eq:finally}, and of the integral representation of $h$ in \eqref{eq:defh} finishes the proof of the proposition.

\end{proof}

\begin{proof}[\textbf{Proof of Proposition \ref{thm:propbarp}}.]
Due to the scaling properties of the functions $\bar{h},h$ and of the density $p$ it is w.l.o.g. to take $t=1$. \\
We use the equations from Lemma \ref{thm:lemmaeqdensities}. 
Due to the proof of Lemma \ref{thm:garbitlike} it is enough to look at
\begin{align}
\label{eq:limitfinal}
\lim_{x,y\searrow 0}\frac{1}{h(x,y)}\int_0^1\int_0^{\infty}\int_0^s\int_0^{\infty}zq_{1-s}(u,-v;0,z)f_r(0,-z;0,w)q_{s-r}(x,y;0,-w)dwdrdzds.
\end{align}
Lemma \ref{thm:garbitlike} and Lemma \ref{thm:lemmahelpintegral1} ensures a dominating function for the integral whenever $|(u,v)|$ is large enough. Otherwise, for $|(u,v)|$ bounded points (a), (b) from Lemma \ref{thm:lemmahelpintegral1} ensure that there is a dominating function and we can interchange integration and limit in  \eqref{eq:limitfinal}. It is easy to see the convergence is also uniform for $\alpha(u,v)\le R$. 

After transformations similar to the ones in the proof of Lemma \ref{thm:garbitlike} we need to verify the limit (uniformly for $\alpha(u,v)\le R$)
\begin{align*}
& \int_{0}^1 \int_{0}^\infty \int_{0}^{\infty} \int_0^{\infty} \int_0^{4} \lim_{x \downarrow 0, y \downarrow 0}  \frac{1}{h(x,y)}  |z|^{3/2} |w|^{3/2} q_{1-s}(u,-v;0,-z) x^{1/6}q_{r}(1,x^{-1/3}y;0,-w)\\ & ~~~~~\times \frac{3}{\pi \sqrt{2 \pi} (s-x^{2/3}r)^{5/2}}
  e^{-\frac{2}{(s-x^{2/3}r)}(z^2  -|x^{1/3}zw| + x^{2/3}w^2)} 
 \theta^{-1/2} e^{-\frac{3|x^{1/3}zw|\theta}{2(s-x^{2/3}r)}} \textrm{d}\theta \textrm{d}w \textrm{d}r  \textrm{d}z \textrm{d}s = \overline{h}(1,u,-v).
\end{align*} 
First we look at the case $x^{-\frac{1}{3}}y\ra c$.
Using $h(x,y) \sim x^{1/6}g(c)$ the integrand equals after substitutions $r\mapsto x^{\frac{2}{3}}r,z\mapsto x^{\frac{1}{3}z}$
\begin{align*}
 &\lim_{x \downarrow 0, y \downarrow 0}  \frac{1}{g(c) } 
  |z|^{3/2} |w|^{3/2} q_{1-s}(u,-v;0,-z) q_{r}(1,x^{-1/3}y;0,-w) \\
\nonumber
 & ~~~~~ \times  \frac{3}{\pi \sqrt{2 \pi} (s-x^{2/3}r)^{5/2}}  e^{-\frac{2}{(s-x^{2/3}r)}(z^2  -|x^{1/3}zw| + x^{2/3}w^2)} 
 \theta^{-1/2} e^{-\frac{3|x^{1/3}zw|\theta}{2(s-x^{2/3}r)}}.
\end{align*}
Solving the limit, this is equal to 
\begin{align*}
\frac{1}{g(c)} |z|^{3/2} |w|^{3/2} q_{1-s}(u,-v;0,-z) q_{r}(1,c;0,-w) 
 \frac{3}{\pi \sqrt{2 \pi} s^{5/2}}  e^{-\frac{2}{s}z^2} \theta^{-1/2},
\end{align*}
uniformly for $\alpha(u,v)\le R$.

After calculating the $\theta$ integral and sorting the terms, we have the integral
\begin{align*}
\nonumber
~& \frac{1}{g(c)} \int_{0}^1 \int_{0}^\infty \frac{4\sqrt{3}}{\sqrt{2 \pi}} |z|^{3/2} s^{-1/2} q_{1-s}(u,-v,0,-z)   \frac{\sqrt{3}}{\pi s^2} e^{-\frac{2}{s}z^2}  \\
  & ~~~~~ \times  \int_0^\infty \int_0^\infty |w|^{3/2} q_{r}(1,c;0,-w) \textrm{d}w \textrm{d}r \textrm{d}z \textrm{d}s\\
=&  \int_{0}^1 \int_{0}^\infty \frac{4\sqrt{3}}{\sqrt{2 \pi}}  |z|^{3/2} s^{-1/2} q_{1-s}(u,-v,0,-z) p_s(0,z;0,0) \\
=& \overline{h}(1,u,-v),
\end{align*}
where in the last steps we used the definitions of $h,g,p$ and $\overline{h}$.
\\ Finally, we look at the case $x^{-1/3}y \rightarrow \infty$.

Note that $x^{-1/3}y \rightarrow \infty$ for $x,y\searrow 0$ is equivalent to $x = 0, y \downarrow 0$. To see this, one can perform a translation of the start points $(x,y)$. Using (4.5) of \citet{groeneboom} we have
\begin{equation*}
\overline{p}_t(0,y;u,v) \sim  \sqrt{y} \overline{h}(t,u,-v) \textrm{~for~} y \downarrow 0.
 \end{equation*}
 It is easy to see from the proof in \citet{groeneboom} that the convergence is uniform in $(u,v)$ as long as they remain bounded.
 
With Lemma \ref{thm:convbehaviorh} we know that  $\sqrt{y} \sim h(x,y)$ as $x,y \searrow 0$ and $x^{-1/3}y \rightarrow \infty$. This concludes the proof.
\end{proof}

\paragraph{\textbf{Disclaimer.}} Michael B\"ar worked on this project in his personal capacity. Any opinions expressed in this article are his own and do not reflect the views of msg systems AG.

\bibliographystyle{apalike}
\bibliography{references.bib}

\end{document}